        \title{Geodesic flow for $\CAT(0)$-groups}
       \author{Arthur Bartels}
       \author{Wolfgang L\"uck}
      \address{Westf\"alische Wilhelms-Universit\"at M\"unster\\
               Mathematisches Institut\\
               Einsteinstr.~62,
               D-48149 M\"unster, Germany}
        \email{bartelsa@math.uni-muenster.de}
      \urladdr{http://www.math.uni-muenster.de/u/bartelsa}
        \email{lueck@math.uni-muenster.de}
      \urladdr{http://www.math.uni-muenster.de/u/lueck}
         \date{March, 2010}
     \keywords{geodesic flow space, $\CAT(0)$-groups, 
                                Farrell-Jones Conjecture}
    \subjclass[2000]{20F67}
\DeclareMathAlphabet{\matheurm}{U}{eur}{m}{n}
\DeclareMathOperator{\id}{id}
\DeclareMathOperator{\im}{im}
\DeclareMathOperator{\Isom}{Isom}
\DeclareMathOperator{\Min}{Min}
\newcommand{\Fin}{{\mathcal{F}\text{in}}}
\newcommand{\VCyc}{{\mathcal{V}\mathcal{C}\text{yc}}}
  \newcommand{\IN}{\mathbb{N}}
  \newcommand{\IR}{\mathbb{R}}
  \newcommand{\IZ}{\mathbb{Z}}
  \newcommand{\calf}{\mathcal{F}}
  \newcommand{\calu}{\mathcal{U}}
  \newcommand{\calv}{\mathcal{V}}
  \newcommand{\calw}{\mathcal{W}}
\newcommand{\peri}[2]{\operatorname{per}^{#1}_{#2}}
\theoremstyle{plain}
\newtheorem{theorem}{Theorem}[section]
\newtheorem{lemma}[theorem]{Lemma}
\newtheorem{corollary}[theorem]{Corollary}
\newtheorem{proposition}[theorem]{Proposition}
\newtheorem{convention}[theorem]{Convention}
\newtheorem*{theorem*}{Theorem}
\newtheorem*{mtheorem*}{Main Theorem}
\theoremstyle{definition}
\newtheorem{definition}[theorem]{Definition}
\newtheorem{remark}[theorem]{Remark}
\newtheorem{notation}[theorem]{Notation}
\theoremstyle{remark}
\newtheorem*{summary*}{Summary}
\let\c@equation=\c@theorem\makeatother
\newenvironment{numberlist}
  {\begin{list}{}%
   {%
    \setlength{\leftmargin}{\labelwidth+\labelsep}%
   }%
  }%
  {\end{list}}
\newcommand{\x}{{\times}}
\newcommand{\dd}{{\partial}}
\newcommand{\e}{{\varepsilon}}
\newcommand{\CAT}{{\operatorname{CAT}}}
\newcommand{\FS}{\mathit{FS}}
\newcommand{\hyp}{\mathit{hyp}}
\begin{document}

\begin{abstract}
We associate to a $\CAT(0)$-space  a flow space  that 
can be used as the replacement for the geodesic flow on 
the sphere tangent bundle of a Riemannian manifold.
We use this flow space to
prove that $\CAT(0)$-group are transfer reducible over
the family of virtually cyclic groups.
This result is an important ingredient in our proof
of the Farrell-Jones Conjecture for these groups.
\end{abstract}

\maketitle

\newlength{\origlabelwidth}
\setlength\origlabelwidth\labelwidth


\typeout{---------- Introduction --------}

\section*{Introduction}

In~\cite{Bartels-Lueck(2009borelhyp)} we introduced the concept of 
\emph{transfer reducible groups} with respect to a family of subgroups.
This definition is somewhat technical and recalled as 
Definition~\ref{def:transfer-reducible} below.
We showed that groups that are transfer reducible over
the family of virtually cyclic subgroups satisfy
the Farrell-Jones Conjecture
with coefficients in an additive category.
For further explanations about the Farrell-Jones Conjecture we refer
for instance to~\cite{Bartels-Lueck(2009borelhyp),
Bartels-Lueck-Reich(2008appl), Lueck-Reich(2005)}, where more information
about the applications, history, literature and status is given.

By a $\CAT(0)$-group we mean a group $G$ that admits a cocompact 
proper action by isometries on a finite dimensional $\CAT(0)$-space.
The following is our main result in this paper and has already been 
used in~\cite{Bartels-Lueck(2009borelhyp)}.

\begin{mtheorem*} \label{the:main_theorem}
  Every $\CAT(0)$-group is transfer reducible over the
  family of virtually cyclic subgroups.
\end{mtheorem*}

A similar result for hyperbolic groups has been proven
in~\cite[Proposition~2.1]{Bartels-Lueck(2009borelhyp)} using the technical
paper~\cite{Bartels-Lueck-Reich(2008cover)}, where an important input
is the flow space for hyperbolic groups due to Mineyev~\cite{Mineyev(2005)}.
The methods for hyperbolic groups cannot be 
transferred directly to $\CAT(0)$-groups,
but the general program is the same and carried out in this paper. 

An important step in the proof of the  theorem above is the
construction of a flow space $\FS(X)$ associated to $\CAT(0)$-spaces $X$,
which is a replacement for the geodesic flow
on the sphere tangent bundle of a Riemannian manifold of
non-positive curvature.
In particular, the dynamic of the flow on $\FS(X)$ is 
similar to the geodesic flow.
As in the hyperbolic case, the flow space $\FS(X)$ is not a bundle
over $X$. 

In Section~\ref{sec:A-flow-space-metric-space} 
we assign to any metric space $X$
its flow space $\FS(X)$ (see Definition~\ref{def_flow_space_FS(X)}). 
Elements in $\FS(X)$ are generalized geodesics, i.e.,
continuous maps $c \colon \IR \to X$ such that either 
$c$ is constant or there exists 
$c_-, c_+ \in \overline{\IR} = \IR \amalg \{-\infty, \infty\}$
with $c_- < c_+$ such that $c$ is locally constant outside the 
interval $(c_-,c_+)$ and its restriction to $(c_-,c_+)$ is an 
isometric embedding. 
The flow on $\FS(X)$ is given by
$\Phi_{\tau}(c)(t) := c(t+ \tau)$.
The topology on $\FS(X)$ is the topology of uniform convergence on compact
subsets; this is also the topology associated to a natural metric
on $\FS(X)$. 
Many properties of $X$ can be transported to $\FS(X)$.
For example, if a group $G$ acts by isometries on $X$, then there is
an induced isometric action on $\FS(X)$.
If the action on $X$ is cocompact, 
then the induced action is also cocompact.

In Sections~\ref{sec:The_flow_space_associated_to_a_CAT(0)-space},%
~\ref{sec:bounded-period}
and~\ref{sec:Contracting-transfers-FS}
we study properties of $\FS(X)$ under the assumption that $X$ is a
$\CAT(0)$-space.
The main observation in 
Section~\ref{sec:The_flow_space_associated_to_a_CAT(0)-space}
is that the endpoint evaluation maps $c \mapsto c(\pm \infty)$ 
from  $\FS(X)$ to the bordification of $\overline{X}$ of $X$
are continuous on the complement of the subspace $\FS(X)^\IR$
of constant generalized geodesics.
These are used in Proposition~\ref{prop:embedding-of-FS} 
to give coordinates on $\FS(X) - \FS(X)^\IR$
and allow a detailed study of the topology of $\FS(X) - \FS(X)^\IR$.
We also discuss in Subsection~\ref{subsec:example_non-pos-curv}
the case where $X$ is a non-positively curved manifold.
In Section~\ref{sec:Contracting-transfers-FS}
we prove our main flow estimates for $\FS(X)$.
These are crucial for our main result and differ
from the corresponding estimates in the hyperbolic case.
In the hyperbolic case the flow acts contracting on
geodesics that determine the same point at infinity.
This is not true in the $\CAT(0)$-situation, for instance on flats
the flow acts as an isometry.
This problem is overcome by using a variant of the focal transfer 
(formulated as a homotopy action) from~\cite{Farrell-Jones(1993c)}. 
In Section~\ref{sec:bounded-period} we assume
that $G$ acts properly and by isometries on $X$ and study
the periodic orbits of $\FS(X)$ with respect to the induced action.
In Theorem~\ref{the:covering_of_FS(X)_gamma} we construct
certain open covers for the subspace 
$\FS(X)_{\leq \gamma}$ of $\FS(X)$ consisting 
of all geodesics of period $\leq \gamma$.
The dimension of this cover is uniformly bounded and the cover
is long in the sense that for every $c \in \FS(X)_{\leq \gamma}$
there is a member $U$ of this cover that contains $\Phi_{[-\gamma,\gamma]}(c)$. 
(In fact, $U$ will contain even $\Phi_\IR(c)$.)
This result is much harder than the corresponding result in the
hyperbolic case, because in the $\CAT(0)$-case the periodic
orbits are no longer discrete, but appear in continuous families.
  
Section~\ref{sec:flow-spaces-and-S-long-covers} contains the final
preparation for the proof of our main theorem.
We show in Proposition~\ref{prop:wide-covers-with-flow-spaces}
that the existence of a suitable flow space for a group $G$ 
implies that $G$ is transfer reducible over a given family.
This result depends very much on the long thin covers for 
flow spaces from~\cite{Bartels-Lueck-Reich(2008cover)}. 
 
In Section~\ref{sec:Non-positively_curved_groups_are_transfer_reducible}
we put our previous results together and prove our main theorem.
It is only here that we assume that the action of
$G$ on the $\CAT(0)$-space $X$ is cocompact.
All previous results are formulated without this assumption.
This forces for example the appearance of a compact set $K$
in Theorem~\ref{the:covering_of_FS(X)_gamma}
and in Definition~\ref{def:at-infinity_plus_periods}.
There are of course prominent groups, for instance $\mathit{SL}_n(\IZ)$,
that are naturally equipped with an isometric proper action on 
a $\CAT(0)$-space, where the action is not cocompact. 
We hope that the level of generality in 
Sections~\ref{sec:A-flow-space-metric-space} 
to~\ref{sec:flow-spaces-and-S-long-covers} will be useful
to prove the Farrell-Jones Conjecture for some of these 
groups.


\subsection*{Conventions}
Let $H$ be a (discrete) group that acts on a space $Z$.  
We will say that the
action is \emph{proper}, if for any $x \in X$ there is an open 
neighborhood $U$
such that $\{h \in H \mid h \cdot U \cap U \neq \emptyset \}$ is finite.  
If $Z$
is locally compact, this is equivalent to the 
condition that for any compact
subset $K \subset Z$ the set 
$\{h \in H \mid h \cdot K \cap K \neq \emptyset\}$
is finite.  

We will say that the action is \emph{cocompact} if the quotient
space $H\backslash Z$ is compact. 
If $Z$ is locally compact, this is equivalent
to the existence of a compact subset $L \subseteq Z$ 
such that $G \cdot L = Z$.

Let $X$ be a topological space. Let $\calu$ be an open covering.
Its \emph{dimension} $\dim(\calu) \in \{0,1,2, \ldots \} \amalg \{\infty\}$ 
is the infimum over all integers $d \ge 0$
such that for any collection $U_0$, $U_1$, \ldots , $U_{d+1}$ of pairwise
distinct elements in $\calu$ the intersection 
$\bigcap_{i=0}^{d+1} U_i$ is empty.
An open covering $\calv$ is a refinement of $\calu$ 
if for every $V \in \calv$ there
is $U \in \calu$ with $V \subseteq U$. 
The \emph{(topological) dimension} (sometimes also called
\emph{covering dimension}) of a topological space $X$
$$\dim(X) \in \{0,1,2, \ldots \} \amalg \{\infty\}$$ 
is the infimum over all integers $d \ge 0$ such that any open covering
$\calu$ possesses a refinement $\calv$ with $\dim(\calv) \le d$.

For  a metric space $Z$, 
a subset $A \subseteq Z$ and $\epsilon > 0$, we set
\begin{eqnarray*}
    B_{\epsilon}(A)
    & := &
    \{z \in Z \mid \exists z' \in A \;\text{with } 
                              d_{Z}(z,z')  < \epsilon\}.
    \\
    \overline{B}_{\epsilon}(A)
    & := &
    \{z \in Z \mid \exists z' \in A \;\text{with } 
                              d_{Z}(z,z')  \leq \epsilon\}.
\end{eqnarray*}
We abbreviate $B_\e(z) = B_\e(\{z\})$ and
$\overline{B}_\e(z) = \overline{B}_\e(\{z\})$.
A metric space $Z$ is called \emph{proper} 
if for every $R > 0$ and every point $z \in Z$ the closed ball 
$\overline{B}_R(z)$ of radius $R$ around $z$  is compact.
A map is called \emph{proper} if the
preimage of any compact subset is again compact.

A \emph{family  of subgroups of a group $G$} is a set of subgroups closed
under conjugation and taking subgroups. Denote by $\VCyc$ the family of
virtually cyclic subgroups.


\subsection*{Homotopy actions and $S$-long covers}
Next we explain the notion of transfer reducible. 
In~\cite{Bartels-Lueck(2009borelhyp)} we introduced 
the following definitions in
order to formulate conditions on groups that imply that a group satisfies the
Farrell-Jones conjecture in $K$- and $L$-theory.

\begin{definition}[Homotopy $S$-action]
\label{def:S-action_plus_long-covers}
Let $S$ be a finite subset of a group $G$.
Assume that $S$ contains the trivial element $e \in G$.
Let $X$ be a space.
\begin{enumerate}
\item \label{def:S-action_plus_long-covers:action}
      A \emph{homotopy $S$-action $(\varphi,H)$ on $X$} consists of
      continuous maps $\varphi_g \colon X \to X$
      for $g \in S$ and homotopies
      $H_{g,h} \colon X \times [0,1] \to X$
      for $g,h \in S$ with $gh \in S$ such that
      $H_{g,h}(-,0) = \varphi_g \circ \varphi_h$
      and $H_{g,h}(-,1) = \varphi_{gh}$ holds for $g,h \in S$ with $gh \in S$.
      Moreover, we require that $H_{e,e}(-,t) = \varphi_e = \id_X$
      for all $t \in [0,1]$;
\item \label{def:S-action_plus_long-covers:F}
      Let $(\varphi,H)$ be a homotopy $S$-action on $X$.
      For $g \in S$ let $F_g(\varphi,H)$ be the set of all
      maps $X \to X$ of the form $x \mapsto H_{r,s}(x,t)$
      where $t \in [0,1]$ and $r,s \in S$ with $rs = g$;
\item \label{def:S-action_plus_long-covers:S(g,x)}
      Let $(\varphi,H)$ be a homotopy $S$-action on $X$.
      For $(g,x) \in G \times X$ and $n \in \IN$,
      let $S^n_{\varphi,H}(g,x)$ be the subset
      of $G \times X$ consisting of all $(h,y)$
      with the following property:
      There are
      $x_0,\dots,x_n \in X$,
      $a_1,b_1,\dots,a_n,b_n \in S$,
      $f_1,\tilde{f}_1,\dots,f_n,\tilde{f_n} \colon X \to X$,
      such that
      $x_0 = x$, $x_n = y$, $f_i \in F_{a_i}(\varphi,H)$,
      $\tilde{f}_i \in F_{b_i}(\varphi,H)$,
      $f_i(x_{i-1}) = \tilde{f}_i(x_i)$ and
      $h = g a_1^{-1} b_1 \dots a_n^{-1} b_n$;
\item \label{def:S-action_plus_long-covers:long}
      Let $(\varphi,H)$ be a homotopy $S$-action on $X$
      and $\calu$ be an open cover of $G \times X$.
      We say that $\calu$ is \emph{$S$-long with respect to $(\varphi,H)$}
      if for every $(g,x) \in G \times X$ there is $U \in \calu$ containing
      $S^{|S|}_{\varphi,H}(g,x)$ where $|S|$ is the cardinality of $S$.
\end{enumerate}
\end{definition}

\begin{definition}[$N$-dominated space]
  \label{def:N-dominated_space}
  Let $X$ be a metric space and $N \in \IN$.
  We say that $X$ is \emph{controlled $N$-dominated}
  if for every $\e > 0$ there is a finite $CW$-complex $K$ of dimension
  at most $N$, maps $i \colon X \to K$, $p \colon K \to X$
  and a homotopy $H \colon X \times [0,1] \to X$ between $p \circ i$ and
  $\id_X$ such that for every $x \in X$
  the diameter of $\{ H(x,t) \mid t \in [0,1] \}$  is
  at most $\e$.
\end{definition}

\begin{definition}[Open $\calf$-cover]
  \label{def:F-cover}
  Let $Y$ be a $G$-space. Let $\calf$ be a family of  subgroups of $G$.
  A subset $U \subseteq Y$ is called
  an \emph{$\calf$-subset} if 
  \begin{enumerate}
  \item For $g  \in G$ and $U \in  \calu$ we have
      $g(U) = U$ or $U \cap g(U) = \emptyset$, where
      $g(U):=  \{gx \mid x \in U \}$;
  \item The subgroup
      $G_U := \{ g \in G \; | \; g(U) = U \}$ lies in $\calf$.
  \end{enumerate}
  An \emph{open $\calf$-cover} of $Y$ is a collection $\calu$ of open
  $\calf$-subsets of $Y$  such that the following conditions are satisfied:
  \begin{enumerate}
  \item $Y = \bigcup_{U \in \calu} U$;
  \item For $g \in  G$, $U \in \calu$  the set
      $g(U) $ belongs to $\calu$.
  \end{enumerate}
\end{definition}

\begin{definition}[Transfer reducible] \label{def:transfer-reducible}
  Let $G$ be a group and $\calf$ be a family of subgroups.
  We will say that $G$ is \emph{transfer reducible} over $\calf$ if
  there is a number $N$ with the following property:

  For every finite subset $S$ of $G$ there are
  \begin{itemize}
  \item a contractible compact controlled $N$-dominated
      metric space $X$;
  \item a homotopy $S$-action $(\varphi,H)$ on $X$;
  \item a cover $\calu$ of $G \times X$ by open sets,
  \end{itemize}
  such that the following holds for the $G$-action on $G \times X$ given by
  $g\cdot (h,x) = (gh,x)$:
  \begin{enumerate}
  \item $\dim \calu \leq N$;
  \item $\calu$ is $S$-long with respect to $(\varphi,H)$;
  \item $\calu$ is an open $\calf$-covering.
  \end{enumerate}
\end{definition}


\subsection*{Acknowledgements} 
The authors thank Holger Reich for fruitful discussions
on $\mathit{SL}_n(\IZ)$.
These led us to formulate some of the results of this
paper without an cocompactness assumption.
The first author thanks Tom Farrell for explaining to him the structure
of closed geodesics in non-positively curved manifolds 
as used in~\cite{Farrell-Jones(1991)} in connection with
the Farrell-Jones Conjecture.
The second author wishes to thank the Max-Planck-Institute
and the Hausdorff Research Institute for Mathematics at Bonn for their hospitality
during longer stays in 2007 and 2009 when parts of this paper were written.
This paper is financially supported by the Leibniz-Preis of the second author.


\typeout{------ A flow space associated to a metric space ------}

\section{A flow space associated to a metric space}
\label{sec:A-flow-space-metric-space}

\begin{summary*}
  In this section we introduce the flow space $\FS(X)$ for
  arbitrary metric spaces.
  We show that $\FS(X)$ is a proper metric
  space if $X$ is a proper metric space
  (see Proposition~\ref{prop:FS-is-proper}). 
  If $X$ comes with a proper cocompact
  isometric $G$-action, then $\FS(X)$ inherits a proper cocompact isometric
  $G$-action  (see Proposition~\ref{prop:cocompact}). 
\end{summary*}

\begin{definition} \label{def:generalized_geodesic}
  Let $X$ be a metric space.
  A continuous map $c \colon \IR \to X$ is called a 
  \emph{generalized  geodesic} 
  if there are  
  $c_-,c_+ \in \overline{\IR} := \IR \coprod \{-\infty,\infty\}$ 
  satisfying
  \[
   c_-  \le  c_+, \quad  c_- \not= \infty, \quad  c_+  \not=  -\infty,  
  \]
  such that $c$ is locally constant on the complement of the interval 
  $I_c := (c_{-},c_{+})$ and restricts to an isometry on $I_c$.
\end{definition}

The numbers $c_-$ and $c_+$ are uniquely determined by $c$, provided
that $c$ is not constant. 

\begin{definition} \label{def_flow_space_FS(X)}
  Let $(X,d_X)$ be a metric space.
  Let $\FS = \FS(X)$ be the set of all generalized geodesics in $X$.
  We define a metric on $\FS(X)$ by
  $$
   d_{\FS(X)}(c,d) := 
     \int_\IR \frac{d_{X}(c(t),d(t))}{2 e^{|t|}} \; dt.
  $$
  Define a flow 
  $$
  \Phi \colon \FS(X) \times \IR \to \FS(X)
  $$
  by $\Phi_{\tau}(c)(t) = c(t + \tau)$ for 
  $\tau \in \IR$, $c \in \FS(X)$ and $t \in \IR$.
\end{definition}

The integral 
$\int_{-\infty}^{+\infty} \frac{d_{X}(c(t),d(t))}{2 e^{|t|}} \; dt$ 
exists since $d_{X}(c(t),d(t)) \le 2|t| + d_{X}(c(0),d(0))$ 
by the triangle inequality.
Obviously $\Phi_{\tau}(c)$ is a generalized geodesic with
\begin{eqnarray*}
\Phi_{\tau}(c)_- & = & c_- - \tau;
\\
\Phi_{\tau}(c)_+ & = & c_+ - \tau,
\end{eqnarray*}
where $- \infty - \tau  := - \infty$ and $\infty - \tau := \infty$.

We note that any isometry $(X,d_X) \to (Y,d_Y)$ induces
an isometry $\FS(X) \to \FS(Y)$ by composition. 
In particular, the isometry group of $(X,d_X)$ acts canonically
on $\FS(X)$. 
Moreover, this action commutes with the flow.

For a general metric space $X$ all generalized geodesics
may be constant.
Later we will consider the case where $X$ is a $\CAT(0)$-space,
but in the remainder of this section we will consider properties
of $\FS(X)$ that do not depend on the $\CAT(0)$-condition.

\begin{lemma} \label{lem:Phi_well-defined}
  Let $(X,d_X)$ be a metric space.
  The map $\Phi$ is a continuous flow and we have
  for $c,d \in \FS(X)$ and $\tau,\sigma \in \IR$
  \begin{eqnarray*}
    d_{\FS(X)}\bigl(\Phi_{\tau}(c), \Phi_{\sigma}(d)\bigr)
    & \le  &  
    e^{|\tau|} \cdot d_{\FS(X)}(c,d) + |\sigma - \tau|.
  \end{eqnarray*}
\end{lemma}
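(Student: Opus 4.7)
The plan is to verify the flow property and the two-part inequality, from which continuity falls out immediately. The flow axioms $\Phi_0=\id$ and $\Phi_\tau\circ\Phi_\sigma=\Phi_{\tau+\sigma}$ follow at once from the definition $\Phi_\tau(c)(t)=c(t+\tau)$. It will then suffice to prove the stated inequality, since given any sequence $(c_n,\tau_n)\to(c,\tau)$ in $\FS(X)\times\IR$ the bound yields $d_{\FS(X)}(\Phi_{\tau_n}(c_n),\Phi_\tau(c))\le e^{|\tau_n|}d_{\FS(X)}(c_n,c)+|\tau_n-\tau|\to 0$.

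To prove the inequality I split it by the triangle inequality as
\[
  d_{\FS(X)}(\Phi_\tau(c),\Phi_\sigma(d))
  \le d_{\FS(X)}(\Phi_\tau(c),\Phi_\tau(d))
     + d_{\FS(X)}(\Phi_\tau(d),\Phi_\sigma(d)).
\]
For the first summand I change variables $s=t+\tau$:
\[
  d_{\FS(X)}(\Phi_\tau(c),\Phi_\tau(d))
  = \int_\IR \frac{d_X(c(s),d(s))}{2e^{|s-\tau|}}\, ds.
\]
The reverse triangle inequality gives $|s-\tau|\ge |s|-|\tau|$, hence $e^{-|s-\tau|}\le e^{|\tau|}e^{-|s|}$, and the first summand is therefore bounded by $e^{|\tau|}d_{\FS(X)}(c,d)$.

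For the second summand I need the auxiliary fact that every generalized geodesic $d$ is $1$-Lipschitz, i.e.\ $d_X(d(a),d(b))\le |a-b|$. This holds because on $I_d=(d_-,d_+)$ the map $d$ is an isometric embedding, and continuity together with local constancy outside $I_d$ forces $d$ to be constant on each connected component of $\IR\setminus I_d$; a short case analysis on the positions of $a$ and $b$ relative to $I_d$ then gives the Lipschitz bound. Applied pointwise this yields $d_X(d(t+\tau),d(t+\sigma))\le|\tau-\sigma|$, so
\[
  d_{\FS(X)}(\Phi_\tau(d),\Phi_\sigma(d))
  \le |\tau-\sigma|\int_\IR \frac{1}{2e^{|t|}}\,dt
  = |\tau-\sigma|.
\]
Adding the two bounds gives the claimed inequality.

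There is no serious obstacle here; the only nontrivial step is the observation that a generalized geodesic is globally $1$-Lipschitz, which is really just bookkeeping around the piecewise description in Definition~\ref{def:generalized_geodesic}. The key mechanism behind the estimate is simply the reverse triangle inequality inside the exponential weight, which converts a shift of integration variable into the multiplicative factor $e^{|\tau|}$.
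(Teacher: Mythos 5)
Your proof is correct and follows essentially the same route as the paper: the same triangle-inequality split through $\Phi_\tau(d)$, the same change of variables plus the reverse triangle inequality $|t-\tau|\ge|t|-|\tau|$ for the factor $e^{|\tau|}$, and the same $1$-Lipschitz bound $d_X(d(t+\tau),d(t+\sigma))\le|\tau-\sigma|$ for the additive term. The only cosmetic difference is that the paper writes the second term as $d_{\FS(X)}\bigl(\Phi_\tau(d),\Phi_{\sigma-\tau}\circ\Phi_\tau(d)\bigr)$, which is the same quantity you estimate directly.
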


\begin{proof}
  Obviously $\Phi_{\tau}\circ \Phi_{\sigma} = \Phi_{\tau + \sigma}$ for 
  $\tau, \sigma \in \IR$ and $\Phi_0= \id_{\FS(X)}$.
  The main task is to show that
  $\Phi \colon \FS(X) \times \IR \to \FS(X)$ is continuous.

  We estimate for $c \in \FS(X)$ and $\tau \in \IR$
  \begin{eqnarray*}
    d_{\FS(X)}\bigl(c, \Phi_{\tau}(c)\bigr)  
    & = & 
    \int_{\IR} \frac{d_X\bigl(c(t),c(t+\tau)\bigr)}{2e^{|t|}} \; dt
    \\
    & \le &
    \int_{\IR} \frac{|\tau|}{2e^{|t|}} \; dt 
    \\ 
    & = &
    |\tau | \cdot \int_{\IR} \frac{1}{2e^{|t|}} \; dt
    \\ 
    & = &
    |\tau|.
  \end{eqnarray*}
  We estimate for $c,d \in \FS(X)$ and $\tau \in \IR$
  \begin{eqnarray*}
    d_{\FS(X)}\bigl(\Phi_{\tau}(c), \Phi_{\tau}(d)\bigr)
    & = & 
    \int_{\IR}\frac{d_X\big(c(t + \tau),d(t+\tau)\bigr)}{2e^{|t|}} \; dt
    \\
    & = & 
    \int_{\IR}\frac{d_X\big(c(t),d(t)\bigr)}{2e^{|t-\tau|}} \; dt
    \\
    & \le  & 
    \int_{\IR}\frac{d_X\big(c(t),d(t)\bigr)}{2e^{|t|-|\tau|}}  \; dt
    \\
    & = & e^{|\tau|} \cdot 
      \int_{\IR}\frac{d_X\big(c(t),d(t)\bigr)}{2e^{|t|}} \; dt
    \\
    & = & e^{|\tau|} \cdot d_{\FS(X)}(c,d).
  \end{eqnarray*}
  The two inequalities above together with the triangle inequality
  imply for $c,d \in \FS(X)$ and $\tau, \sigma \in \IR$
  \begin{eqnarray*}
    \lefteqn{d_{\FS(X)}\bigl(\Phi_{\tau}(c), \Phi_{\sigma}(d)\bigr)}
    &&
    \\
    & = &  
    d_{\FS(X)}\bigl(\Phi_{\tau}(c), \Phi_{\sigma-\tau}\circ \Phi_{\tau}(d)\bigr)
    \\
    & \le  &  
    d_{\FS(X)}\bigl(\Phi_{\tau}(c), \Phi_{\tau}(d)\bigr) 
      + d_{\FS(X)}\bigl(\Phi_{\tau}(d), 
               \Phi_{\sigma-\tau} \circ \Phi_{\tau}(d)\bigr)
    \\
    & \le &
    e^{|\tau|} \cdot d_{\FS(X)}(c,d) + |\sigma - \tau|.
  \end{eqnarray*}
  This implies that $\Phi$ is continuous  at $(c,\tau)$.
\end{proof}

The following lemma relates distance in $X$ to distance in $\FS(X)$.

\begin{lemma}\label{lem:comparing_d_FS-d_X}
  Let $c,d \colon \IR \to X$ be generalized geodesics. 
  Consider $t_0 \in \IR$.
  \begin{enumerate}
  \item \label{lem:comparing_d_FS-d_X:general}
        $d_X\bigl(c(t_0),d(t_0)\bigr)  \; \le \;
             e^{|t_0|} \cdot d_{\FS}(c,d) + 2$;
  \item \label{lem:comparing_d_FS-d_X:small}
        If $d_\FS(c,d) \leq 2 e^{-|t_0|-1}$, then
        \[
        d_X\bigl(c(t_0),d(t_0)\bigr) \; \le \;
              \sqrt{4e^{|t_0|+1}} \cdot \sqrt{d_\FS(c,d)}.
        \]
        In particular, $c \mapsto c(t_0)$ defines
        a uniform continuous map $\FS(X) \to X$.
  \end{enumerate}
\end{lemma}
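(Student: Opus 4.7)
My plan is to exploit the fact that every generalized geodesic is $1$-Lipschitz (immediate from the definition, since it is either constant, an isometry on an interval, or locally constant outside), so $t \mapsto d_X(c(t),d(t))$ is $2$-Lipschitz. Writing $D := d_X(c(t_0),d(t_0))$, this yields the pointwise lower bound $d_X(c(t),d(t)) \ge \max\{0,\,D-2|t-t_0|\}$. Combined with the reverse triangle inequality $|t| \le |t_0| + |t-t_0|$, which rewrites as $e^{-|t|} \ge e^{-|t_0|}\cdot e^{-|t-t_0|}$, both parts reduce to estimating an elementary one-dimensional integral.

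For part (i) one may assume $D>2$, since otherwise the bound is trivial. Plugging the two estimates into the definition of $d_\FS$ and substituting $s = t - t_0$ gives
$$d_\FS(c,d) \;\ge\; \frac{e^{-|t_0|}}{2}\int_{-D/2}^{D/2}(D-2|s|)\, e^{-|s|}\, ds \;=\; e^{-|t_0|}\bigl(D - 2 + 2 e^{-D/2}\bigr) \;\ge\; e^{-|t_0|}(D-2),$$
where the middle equality is a routine integration by parts. Rearranging gives part (i).

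For part (ii), the same calculation yields $g(D) \le e^{|t_0|} d_\FS(c,d)$, where $g(D) := D - 2 + 2 e^{-D/2}$. Since $g'(D) = 1 - e^{-D/2} \ge 0$ and $g(2) = 2/e$, the hypothesis $d_\FS(c,d) \le 2 e^{-|t_0|-1}$ forces $g(D) \le 2/e$ and hence $D \le 2$. Knowing this, the integration region $[t_0 - D/2, t_0 + D/2]$ has length at most $2$, so $|t| \le |t_0| + 1$ throughout and the inner factor $e^{-|s|}$ can be dropped from the weight:
$$d_\FS(c,d) \;\ge\; \frac{1}{2 e^{|t_0|+1}}\int_{-D/2}^{D/2}(D - 2|s|)\, ds \;=\; \frac{D^2}{4 e^{|t_0|+1}}.$$
Rearranging gives the stated inequality, and uniform continuity of $c \mapsto c(t_0)$ follows by taking $\delta := \min\bigl(2 e^{-|t_0|-1},\, \varepsilon^2/(4 e^{|t_0|+1})\bigr)$ for given $\varepsilon > 0$.

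The main obstacle I anticipate is the two-step nature of part (ii): the hypothesis $d_\FS(c,d) \le 2 e^{-|t_0|-1}$ cannot be plugged directly into the integral estimate, but must first be used to confine $D$ to the regime $D \le 2$, where the cruder bound $e^{-|t|} \ge e^{-|t_0|-1}$ is available on the entire interval of positivity. Recognizing that $g(2) = 2/e$ matches the threshold in the hypothesis is the coincidence that makes the two steps fit together with the precise constant $4 e^{|t_0|+1}$.
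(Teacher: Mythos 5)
Your proof is correct, and part \ref{lem:comparing_d_FS-d_X:general} follows the paper's argument exactly (same pointwise lower bound $d_X(c(t),d(t)) \ge D - 2|t-t_0|$, same substitution, same integration by parts yielding $d_\FS(c,d) \ge e^{-|t_0|}(2e^{-D/2} + D - 2)$). For part \ref{lem:comparing_d_FS-d_X:small} your first step is also the same as the paper's: both exploit monotonicity of $D \mapsto D - 2 + 2e^{-D/2}$ (equivalently $x \mapsto e^{-x}+x-1$) together with its value at $D=2$ to deduce $D \le 2$ from the hypothesis. Where you genuinely diverge is in the second step. The paper proves the elementary inequality $e^{-x}+x-1 \ge x^2/(2e)$ on $[0,1]$ by a two-step calculus argument ($f(0)=f'(0)=0$, $f''\ge 0$) and then plugs $x=D/2$ into the already-evaluated expression $e^{-|t_0|}(2e^{-D/2}+D-2)$. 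You instead return to the raw integral $\int_{t_0-D/2}^{t_0+D/2}(D-2|t-t_0|)/(2e^{|t|})\,dt$ and observe that, once $D\le 2$ is known, the domain of integration lies inside $\{|t| \le |t_0|+1\}$, so the exponential weight $e^{-|t|}$ can be bounded below by the constant $e^{-|t_0|-1}$ throughout; the remaining triangle integral evaluates exactly to $D^2/2$. This avoids the Taylor-type comparison entirely and lands on the identical constant $4e^{|t_0|+1}$, which is a nice simplification: the only non-trivial analytic fact you need is monotonicity of a single function, used once.
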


\begin{proof}
  We abbreviate $D := d_X(c(t_0),d(t_0))$.
  Since $c$ and $d$ are generalized geodesics, we conclude using the 
  triangle inequality
  for $t \in \IR$
  \begin{equation*}
    d_X(c(t),d(t)) 
     \ge 
    D - d_X\bigl(c(t_0),c(t)\bigr) - 
       d_X\bigl(d(t_0),d(t)\bigr) 
     \ge 
    D - 2\cdot |t-t_0|.
  \end{equation*}
  This implies
  \begin{eqnarray}
    d_{\FS(X)}(c,d) 
    & = & 
    \int_{-\infty}^{+\infty} \frac{d_{X}(c(t),d(t))}{2 e^{|t|}} \; dt
    \nonumber
    \\
    & \ge & 
    \int_{-D/2+t_0}^{D/2+t_0} 
    \frac{D - 2\cdot |t-t_0|}{2 e^{|t|}} \; dt
    \nonumber 
    \\
    & = & 
    \int_{-D/2}^{D/2} 
    \frac{D - 2\cdot |t|}{2 e^{|t+t_0|}} \; dt
    \nonumber
    \\
    & \ge & 
    \int_{-D/2}^{D/2} 
    \frac{D - 2\cdot |t|}{2 e^{|t|+|t_0|}} \; dt
    \nonumber
    \\
    & = & 
    e^{-|t_0|} \cdot \int_{-D/2}^{D/2} 
    \frac{D - 2\cdot |t|}{2 e^{|t|}} \; dt
    \nonumber
    \\
    & = &  
    e^{-|t_0|} \cdot \int_0^{D/2}  
      \bigl(D - 2t\bigr) 
            \cdot e^{-t} \; dt
    \nonumber
    \\
    & = &  
    e^{-|t_0|} \cdot  
    \left[(-D+2+2t)  \cdot e^{-t} \right]_0^{D/2} 
    \nonumber
    \\
    & = & 
    e^{-|t_0|} \cdot   \left(2 \cdot e^{-D/2} +  D  - 2\right).
    \label{seq_inequalities}
    \end{eqnarray}
    Since 
    $ e^{-|t_0|} \cdot   \left(2 \cdot e^{-D/2} +  D  - 2\right) \geq 
    e^{-|t_0|} \cdot ( D - 2)$, assertion~\ref{lem:comparing_d_FS-d_X:general} 
    follows. It remains to prove assertion~\ref{lem:comparing_d_FS-d_X:small}.

  Consider the function $f(x) = e^{-x} + x - 1 - \frac{x^2}{2e}$. 
  We have $f'(x) = - e^{-x} + 1 - \frac{x}{e}$ and $f''(x) = e^{-x} - e^{-1}$.
  Hence $f''(x) \ge 0$ for $x \in [0,1]$. 
  Since $f'(0) = 0$, this implies $f'(x) \ge 0$ for $x \in [0,1]$. 
  Since $f(0) = 0$, this implies $f(x) \ge 0$ for $x \in [0,1]$. 
  Setting $x = D/2$ we obtain $\frac{(D/2)^2}{2e} \leq  e^{-D/2} + D/2 -1$
  for $D \in [0,2]$. From~\eqref{seq_inequalities} we get
  $d_\FS(c,d) \geq 2e^{-|t_0|} \cdot \left( e^{-D/2} + D/2 -1 \right)$.
  Therefore
  $$
  \frac{D^2}{4e^{|t_0| + 1}} \le d_{\FS(X)}(c,d) 
  \quad \;\text{if } D \le 2.
  $$
  Consider the function $g(x) = e^{-x} + x - 1$. 
  Since $g'(x) =  - e^{-x} + 1 > 0$ for $x \ge 1$,
  we conclude $g(x) > g(1) = e^{-1}$ for $x > 1$. 
  Hence~\eqref{seq_inequalities} implies
  $$
  d_{\FS(X)}(c,d) >  2e^{-|t_0| - 1} \quad \;\text{if } 
       D > 2.
  $$
  Hence we have
  $$
  d_X\bigl(c(t_0),d(t_0)\bigr) = D \; \le \; \sqrt{4e^{|t_0| +1}} \cdot 
                                                   \sqrt{d_{\FS}(c,d)}.
  $$
  if $d_{\FS}(c,d) \le 2\cdot e^{-|t_0|-1}$.
\end{proof}

\begin{lemma}
  \label{lem:c-plus-minus-cont}
  Let $(X,d_X)$ be a metric space.
  The maps
  \begin{eqnarray*}
    FS(X) - \FS(X)^{\IR} & \to & \overline{\IR}, \quad c \mapsto c_-;
    \\
    FS(X) - \FS(X)^{\IR} & \to & \overline{\IR}, \quad c \mapsto c_+,
  \end{eqnarray*}
  are continuous.
\end{lemma}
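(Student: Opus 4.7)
The plan is to reduce continuity of $c \mapsto c_\pm$ to an injectivity argument for the two-variable function $(s,t) \mapsto d_X(c(s), c(t))$. The starting observation is the following explicit formula, valid for any generalized geodesic $c$ and any $s \le t$ in $\IR$:
$$d_X(c(s), c(t)) \; = \; \max\bigl(0,\, \min(t, c_+) - \max(s, c_-)\bigr).$$
I would verify this by a case analysis based on the positions of $s$ and $t$ relative to the interval $(c_-, c_+)$, using that $c$ is an isometry on $(c_-, c_+)$ and locally constant outside. Denote the right-hand side by $f(c_-, c_+; s, t)$; with the conventions $\min(t, \infty) = t$ and $\max(s, -\infty) = s$, the function $f$ is continuous in $(c_-, c_+) \in \overline{\IR}^2$ on the admissible region $\{c_- \le c_+,\; c_- \ne \infty,\; c_+ \ne -\infty\}$.

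Next, suppose $c^n \to c$ in $\FS(X) - \FS(X)^\IR$. By Lemma~\ref{lem:comparing_d_FS-d_X}\ref{lem:comparing_d_FS-d_X:small}, the evaluation maps $c \mapsto c(s)$ are continuous, so $c^n(s) \to c(s)$ for every $s$, and hence
$$f(c^n_-, c^n_+;\, s, t) \; = \; d_X(c^n(s), c^n(t)) \; \longrightarrow \; d_X(c(s), c(t)) \; = \; f(c_-, c_+;\, s, t)$$
for every $s \le t$ in $\IR$. Since $\overline{\IR}^2$ is compact, any subsequence of $\{(c^n_-, c^n_+)\}$ has a further subsequence converging to some $(\alpha, \beta)$ in the admissible region (the defining inequalities are closed and survive the limit). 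By continuity of $f$ in its first two arguments,
$$f(\alpha, \beta;\, s, t) \; = \; f(c_-, c_+;\, s, t) \quad \text{for all } s \le t \text{ in } \IR.$$

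The plan is then to show that the map $(a, b) \mapsto f(a, b;\, \cdot,\, \cdot)$ is injective on the admissible region (restricted to $a < b$, which is automatic for limits of non-constant geodesics since the limiting function $f(c_-, c_+;\, \cdot,\, \cdot)$ is not identically zero). Granted this, $(\alpha, \beta) = (c_-, c_+)$, and since every subsequence has a further subsequence with this same limit, the whole sequence $(c^n_-, c^n_+)$ converges to $(c_-, c_+)$ in $\overline{\IR}^2$; both components converge, which gives continuity of $c \mapsto c_-$ and of $c \mapsto c_+$ simultaneously.

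The only delicate point is the injectivity claim, which I expect to be the main obstacle in that one must treat the infinite endpoints carefully. One first distinguishes whether $b = \infty$ (resp.\ $a = -\infty$) by observing that $\lim_{t \to \infty} f(a, b; 0, t)$ (resp.\ $\lim_{s \to -\infty} f(a, b; s, 0)$) is finite precisely when $b$ (resp.\ $a$) is finite. Once the type of each endpoint is known, the finite values of $a, b$ are read off directly from the formula, for instance from the boundary of the zero-set $\{(s, t) : s \le t,\, f(a, b; s, t) = 0\}$, which for finite $a < b$ is exactly $\{s = t\} \cup \{t \le a\} \cup \{s \ge b\}$. This pins down $(a, b)$ uniquely and completes the argument.
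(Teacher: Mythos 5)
Your proof is essentially correct, and it takes a genuinely different route from the paper. The paper's argument is a direct $\e$-$\delta$ computation at a single $c$: to control $d_+$ from below it picks $s_0 < t_0 < c_+$ with $c(s_0)\neq c(t_0)$ and uses evaluation continuity (Lemma~\ref{lem:comparing_d_FS-d_X}\ref{lem:comparing_d_FS-d_X:small}) to force $d(s_0)\neq d(t_0)$; to control $d_+$ from above when $c_+<\infty$ it picks $c_+<s_1<t_1<\alpha_1$ with $c(s_1)=c(t_1)$, forces $d_X(d(s_1),d(t_1))<t_1-s_1$, and then rules out $d_->s_1$ by the first step. Your approach instead isolates the explicit identity $d_X(c(s),c(t))=\max\bigl(0,\min(t,c_+)-\max(s,c_-)\bigr)$, views the pair $(c_-,c_+)$ as determined by the two-variable ``signature'' $(s,t)\mapsto d_X(c(s),c(t))$, and turns the lemma into: the signature depends continuously on $c$ (again via Lemma~\ref{lem:comparing_d_FS-d_X}) while the map from admissible pairs $(a,b)$ with $a<b$ to signatures is continuous and injective on a compact space, so the inverse is continuous; a subsequence argument finishes. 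This is more structural and makes transparent \emph{what} data about $c$ the endpoint map is extracting, at the cost of verifying the formula, the injectivity, and the compactness bookkeeping. The paper's proof is shorter and purely local.

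Two small points you should tighten. First, you claim the admissible region's ``defining inequalities are closed and survive the limit'', but $c_-\neq\infty$ and $c_+\neq-\infty$ are open conditions, so the subsequential limit $(\alpha,\beta)$ a priori could equal $(\infty,\infty)$ or $(-\infty,-\infty)$. This does no harm: $f$ extends continuously to those two corner points of $\{a\le b\}\subseteq\overline{\IR}^2$ with value identically $0$, so the same non-vanishing argument that rules out $\alpha=\beta$ also rules out those corners; but you should say so rather than assert closedness. Second, the injectivity sketch is correct but should be written out: for a fixed $s$ with $s<\alpha$ (or any $s$ if $\alpha=-\infty$) the function $t\mapsto f(\alpha,\beta;s,t)$ vanishes exactly for $t\le\alpha$, increases with slope $1$ on $[\alpha,\beta]$, and is constant thereafter, which recovers $\alpha$ as the last zero and $\beta$ from the saturation level (with the obvious degenerate readings when $\alpha=-\infty$ or $\beta=\infty$). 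With those two clarifications the argument is complete.
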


\begin{proof}
  By an obvious symmetry it suffices to consider the second map. 
  Let $c \in FS(X) - \FS(X)^{\IR}$.
  Let $\alpha_0 \in \IR$ with $\alpha_0 < c_+$.
  We will first show that there is $\e_0$ such that
  $d_+ > \alpha_0$ if $d_\FS(c,d) < \e_0$.
  Pick $s_0,t_0 \in \IR$ such that $\alpha_0 < s_0 < t_0 < c_+$ and 
  $c(s_0) \neq c(t_0)$.
  By Lemma~\ref{lem:comparing_d_FS-d_X}~\ref{lem:comparing_d_FS-d_X:small} 
  there is $\e_0 > 0$
  such that $d_\FS(c,d) < \e_0$ implies
  \[
  \max \left\{ d_X\bigl(c(s_0),d(s_0)\bigr), d_X\bigl(c(t_0),d(t_0)\bigr) \right\} 
                          < \frac{d_X\bigl(c(s_0),c(t_0)\bigr)}{3}.
  \]
  For $d$ with $d_\FS(c,d) < \e_0$ we have 
  $d_X \bigl( d(s_0), d(t_0) \bigr) > d_X \bigl( c(s_0), c(t_0) \big) /3$ 
  by the triangular inequality
  and in particular, $d(s_0) \neq d(t_0)$.
  This implies $d_+ > s_0$ and therefore $d_+ > \alpha_0$.
  If $c_+ = + \infty$, then this shows that the second map is
  continuous at $c$.

  If $c_+ < \infty$, then we need to show in addition that for a given 
  $\alpha_1 > c_+$, there is $\e_1 > 0$ such that $d_+ < \alpha_1$
  for all $d$ with $d_\FS(c,d) < \e_1$.
  Note that the previous argument also implied that
  $d_- < t_0 < c_+$ if $d_\FS(c,d) < \e_0$ 
  (because then $d(s_0) \neq d(t_0)$).
  Pick now $s_1, t_1 \in \IR$ with $c_+ < s_1 < t_1 < \alpha_1$. 
  By Lemma~\ref{lem:comparing_d_FS-d_X}~\ref{lem:comparing_d_FS-d_X:small} 
  there is $\e_1 \in \IR$ satisfying  $0 < \e_1 < \e_0$
  such that $d_\FS(c,d) < \e_1$ implies
  \[
  \max \left\{ d_X\bigl(c(s_1),d(s_1)\bigr), d_X\bigl(c(t_1),d(t_1)\bigr) \right\} 
                          < \frac{t_1 - s_1}{2}.
  \]  
  Because $c(s_1) = c(t_1)$ we get
  $d_X\bigl(d(s_1),d(t_1)\bigr) < t_1 - s_1$ for $d$ with $d_\FS(c,d) < \e_1$.
  This implies $d_+ < t_1$ or $d_- > s_1$, 
  because otherwise $d_X\bigl(d(s_1),d(t_1)\bigr) = t_1 - s_1$.
  However, $d_- < c_+ < s_1$ because $\e_1 < \e_0$.
  Thus $d_+ < t_1 < \alpha_1$. 
\end{proof}

\begin{proposition}
  \label{prop:uniform-convergence-on-compact}
  Let $(X,d_X)$ be a metric space. Let 
  $(c_n)_{n \in \IN}$ be a sequence in $\FS(X)$.
  Then it converges uniformly on compact subsets to $c \in \FS(X)$
  if and only if it converges to $c$ with respect to $d_{\FS(X)}$.
\end{proposition}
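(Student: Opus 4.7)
The plan is to prove both implications directly from the definition of $d_{\FS(X)}$ together with Lemma~\ref{lem:comparing_d_FS-d_X}, splitting the real line into a compact core and two tails so the integral decomposes correspondingly.

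For the implication \emph{uniform convergence on compact sets implies convergence in $d_{\FS(X)}$}, I would fix $\varepsilon > 0$ and choose $T > 0$ large. Since $c_n(0) \to c(0)$ (in particular), the distances $d_X(c_n(0), c(0))$ are bounded by some constant $M$, and the triangle inequality applied to generalized geodesics gives the crude bound $d_X(c_n(t), c(t)) \le 2|t| + M$ for all $t$. Using this, the tail $\int_{|t| > T} \tfrac{d_X(c_n(t), c(t))}{2 e^{|t|}}\, dt$ is dominated by $\int_{|t| > T} \tfrac{2|t| + M}{2 e^{|t|}}\, dt$, which is independent of $n$ and can be made less than $\varepsilon/2$ by choosing $T$ sufficiently large. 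On the compact interval $[-T, T]$, the integrand is bounded by $\sup_{|t| \le T} d_X(c_n(t), c(t))$ times the integrable factor $\tfrac{1}{2 e^{|t|}}$, whose total mass is at most $1$; by uniform convergence on $[-T, T]$ this supremum tends to zero, so the corresponding piece of the integral is eventually less than $\varepsilon/2$.

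For the converse \emph{convergence in $d_{\FS(X)}$ implies uniform convergence on compact subsets}, fix $T > 0$; I want to show $\sup_{|t_0| \le T} d_X(c_n(t_0), c(t_0)) \to 0$. This is precisely where Lemma~\ref{lem:comparing_d_FS-d_X}~\ref{lem:comparing_d_FS-d_X:small} does the work: for $n$ large enough that $d_{\FS(X)}(c_n, c) \le 2 e^{-T-1}$ (which also ensures $d_{\FS(X)}(c_n, c) \le 2 e^{-|t_0|-1}$ for all $|t_0| \le T$), one has the uniform bound
\[
d_X\bigl(c_n(t_0), c(t_0)\bigr) \; \le \; \sqrt{4 e^{T+1}} \cdot \sqrt{d_{\FS(X)}(c_n, c)}
\]
for every $t_0 \in [-T, T]$. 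Letting $n \to \infty$ gives uniform convergence on $[-T, T]$, and since $T$ was arbitrary, uniform convergence on every compact subset of $\IR$ follows.

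I expect no significant obstacle here: the first direction is a routine split-and-estimate using the growth bound $d_X(c_n(t), c(t)) \le 2|t| + M$ valid for generalized geodesics together with integrability of $e^{-|t|}$, while the second is essentially a direct application of the second clause of Lemma~\ref{lem:comparing_d_FS-d_X}. The only small point to keep in mind is to use the a priori boundedness of $d_X(c_n(0), c(0))$ (which follows from convergence in $d_{\FS(X)}$ applied at $t_0 = 0$ via the same lemma, or from the hypothesis in the forward direction) to control the tail uniformly in $n$.
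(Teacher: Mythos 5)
Your proof is correct and follows essentially the same approach as the paper: both directions split the defining integral into a compact core and exponentially decaying tails, with Lemma~\ref{lem:comparing_d_FS-d_X}~\ref{lem:comparing_d_FS-d_X:small} supplying the converse direction. The only cosmetic difference is that you control the tail via a uniform bound $M$ on $d_X(c_n(0),c(0))$ obtained a priori, whereas the paper folds this into the $\varepsilon/\alpha$ bound coming from uniform convergence on $[-\alpha,\alpha]$; the estimates are otherwise identical.
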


\begin{proof}
  From Lemma~\ref{lem:comparing_d_FS-d_X}~\ref{lem:comparing_d_FS-d_X:small}
  we conclude that convergence with respect to $d_{\FS(X)}$ implies
  uniform convergence on compact subsets.
  
  Let $(c_n)_{n \in \IN}$ be a sequence in $\FS(X)$ 
  that converges uniformly on compact subsets to $c \in \FS(X)$.
  Let $\e > 0$.
  Pick $\alpha \geq 1$ such that 
  $\int_\alpha^{\infty} \frac{2t+\e}{e^t}dt < \e$.
  Because of uniform convergence on $[-\alpha,\alpha]$, 
  there is $n_0$ such that $d_X(c_n(t),c(t)) \leq \e / \alpha$ for all
  $n \geq n_0$, $t \in [-\alpha,\alpha]$.
  In particular, $d_X(c_n(t),c(t)) \leq \e + 2|t|$ for all $t$,
  provided $n \geq n_0$.
  Thus for $n \geq n_0$
  \begin{eqnarray*}
    d_\FS(c_n, c) 
    & = & \int_{-\infty}^{\infty} \frac{d_X\bigl(c_n(t),c(t)\bigr)}{2e^{|t|}}dt
    \\
    & \leq & \int_{-\alpha}^\alpha \frac{\e / \alpha}{2e^{|t|}}dt
             + 2 \int_{\alpha}^\infty \frac{\e + 2t}{2e^{|t|}}dt
    \\
    & \leq &  \e +  \e = 2 \e. 
  \end{eqnarray*}
  This shows $c_n \to c$ with respect to $d_\FS$, because
  $\e$ was arbitrary.
\end{proof}

\begin{lemma}
  \label{lem:FS-closed}
  Let $(X,d_X)$ be a metric space.
  The flow space $\FS(X)$ is sequentially closed in the space of all maps
  $\IR \to X$ with respect to the topology of 
  uniform convergence on compact subsets. 
\end{lemma}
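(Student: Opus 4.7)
The plan is as follows. Suppose $(c_n)_{n \in \IN}$ is a sequence of generalized geodesics converging uniformly on compact subsets to some $c \colon \IR \to X$; I need to produce endpoints $c_-, c_+$ witnessing that $c$ is itself a generalized geodesic in the sense of Definition~\ref{def:generalized_geodesic}. Since $\overline{\IR}$ is compact, by passing to a subsequence (which does not alter the limit $c$) I may arrange $c_{n,-} \to \alpha$ and $c_{n,+} \to \beta$ in $\overline{\IR}$. The conditions $c_{n,-} \le c_{n,+}$, $c_{n,-} \ne +\infty$, $c_{n,+} \ne -\infty$ are preserved in the limit, so $\alpha, \beta$ are legitimate candidates for $c_-, c_+$.

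Verifying the isometry condition on $(\alpha, \beta)$ is direct: for $s < t$ in this open interval we have $c_{n,-} < s < t < c_{n,+}$ for all sufficiently large $n$, so $d_X(c_n(s), c_n(t)) = t-s$, and passing to the limit using pointwise convergence (implied by uniform convergence on compact subsets) yields $d_X(c(s), c(t)) = t-s$. Verifying that $c$ is locally constant on the complement reduces to the following: given $t_1, t_2 > \beta$ (nontrivial only when $\beta < \infty$), for $n$ large we have $c_{n,+} < \min(t_1, t_2)$, and since $c_n$ is constant on $[c_{n,+},+\infty)$ we get $c_n(t_1) = c_n(c_{n,+}) = c_n(t_2)$, whence $c(t_1) = c(t_2)$. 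The symmetric argument handles $(-\infty, \alpha)$, and continuity of $c$ (uniform limits of continuous functions being continuous) extends the constancy to the closed half-lines.

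The one case requiring extra attention is the degenerate $\alpha = \beta$, where $(\alpha, \beta)$ is empty and the isometry condition is vacuous. If $\alpha = \beta$ is finite, continuity of $c$ at this point forces the two constant values on either side to agree, so $c$ is globally constant and any choice $c_- = c_+$ works. If $\alpha = \beta = +\infty$ (respectively $-\infty$), then for any fixed pair $s, t \in \IR$ both numbers lie below $c_{n,-}$ (respectively above $c_{n,+}$) for $n$ large, so once again $c_n(s) = c_n(t)$ in the limit and $c$ is constant. In every case $c$ qualifies as a generalized geodesic. The main obstacle is just the bookkeeping of these degenerate endpoint configurations; the analytic content boils down to pointwise convergence combined with the piecewise structure of a generalized geodesic (isometry on the open middle interval, locally constant on the complement).
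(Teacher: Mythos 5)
Your proof is correct and follows the same broad outline as the paper's: pass to a subsequence so the endpoints converge in $\overline{\IR}$, verify the isometry in the middle and local constancy outside, then treat degenerate endpoint configurations.

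There are two places where the paper does more work than you, and it is worth being aware of why. First, the paper begins by passing to a subsequence where either all $c_n$ are constant or none are. Your opening move (``I may arrange $c_{n,-} \to \alpha$, $c_{n,+} \to \beta$'') implicitly assumes the numbers $c_{n,\pm}$ are well-defined, which the paper explicitly warns is only the case for non-constant $c_n$. This is a small omission rather than a real gap: one can either insert the same case split, or simply make an explicit (arbitrary) choice of $c_{n,\pm}$ for any constant $c_n$, in which case your subsequent analysis, including the degenerate cases, goes through unchanged. Second, your verification of the isometry is genuinely more elementary than the paper's. You observe that for $\alpha < s < t < \beta$ one eventually has $c_{n,-} < s < t < c_{n,+}$, so $d_X(c_n(s),c_n(t)) = t - s$ exactly, and pass to the limit. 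The paper instead fixes a small $\epsilon$, perturbs to $s+\epsilon$ and $t-\epsilon$, bounds $d_X(f(s),f(t))$ up to an error of $6\epsilon$, and lets $\epsilon \to 0$. The reason the paper works harder is that it establishes the isometry on the \emph{closed} interval $[\alpha_-, \alpha_+]$, whereas you correctly note that Definition~\ref{def:generalized_geodesic} only demands it on the open interval $I_c = (c_-, c_+)$, so the extra $\epsilon$-perturbation is unnecessary. Your route is cleaner here.

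One minor wording issue: the sentence ``$\alpha, \beta$ are legitimate candidates for $c_-, c_+$'' is not literally true before the degenerate cases are treated, since $c_{n,-} \neq +\infty$ is not preserved under limits ($\alpha = +\infty$ is possible). Your later discussion of $\alpha = \beta = \pm\infty$ covers exactly these situations, so this is only an imprecision of exposition; just move that caveat earlier or state that the conditions on $c_\pm$ are revisited in the degenerate cases.
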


\begin{proof}
  Let $(c_n)_{n \in \IN}$ be a sequence of generalized geodesics
  that converges uniformly on compact subsets to $f \colon \IR \to X$.
  We have to show that $f$ is a generalized geodesic.
  By passing to a subsequence we can assume that either 
  $c_n \in \FS^\IR$ for all $n$, or $c_n \not\in \FS^\IR$ for all $n$.
  In the first case $f \in \FS^\IR$.
  Thus it remains to treat the second case.
  In this case we have well-defined sequences $(c_n)_{-}$ and
  $(c_n)_{+}$.
  After passing to a further subsequence we can assume that
  these sequences converge in $[-\infty,\infty]$.
  Thus there are $\alpha_{-},\alpha_{+} \in [-\infty,\infty]$
  such that $(c_n)_{\pm} \to \alpha_{\pm}$ as $n \to \infty$.
  We will show that $f$ is a generalized geodesic with
  $f_{\pm} = \alpha_{\pm}$ or $f \in \FS^\IR$.
  Clearly, $d_X(f(s), f(t)) \leq |t-s|$ for all $s,t$. 
  
  If $\alpha_{-} > -\infty$, then we have to show
  that $f(s) = f(t)$ for all $s < t \leq \alpha_{-}$.
  Pick $\e > 0$.
  There is $n_0$ such that $|f(\tau) - c_n(\tau)| < \e$
  for all $\tau \in [s-\e,t]$, $n \geq n_0$.
  Since $(c_n)_{-} \to \alpha_{-}$, there is $k \geq n_0$ such 
  that $s - \e,t - \e \leq (c_k)_{-}$.
  Thus
  \begin{eqnarray*}
    d_X(f(s),f(t)) & \leq &  d_X(f(s-\e),f(t-\e)) + 2\e 
    \\
    & \leq & d_X(c_k(s-\e),c_k(t-\e)) + 4\e
    \\
    & = &  4\e.     
  \end{eqnarray*}
  Because $\e$ is arbitrary, we conclude $f(s) = f(t)$. 
  If $\alpha_+ < \infty$, then a similar argument shows 
  that $f(s) = f(t)$ for all $s,t \geq \alpha_{+}$.
  If $\alpha_{-} = \alpha_{+}$, then $f \in \FS(X)^{\IR}$ and we are done.
  It remains to treat the case $\alpha_{-} < \alpha_{+}$. We have to show
  that $d_X(f(s),f(t)) = t-s$ for all $s,t \in \IR$ with
  $\alpha_{-} \leq s < t \leq \alpha_{+}$. 
  Pick $\e > 0$, such that $2\epsilon  < t - s$.
  There is $n_0$ such that $|f(\tau) - s_n(\tau)| < \e$
  for all $\tau \in [s,t]$, $n \geq n_0$.
  Since $(c_n)_{\pm} \to \alpha_{\pm}$, there is $k \geq n_0$
  such that $(c_k)_{-} \leq s + \e < t - \e \leq (c_k)_{+}$.
  Thus
  \begin{eqnarray*}
    d_X(f(s),f(t)) & \geq & d_X(f(s+\e),f(t-\e)) - 2\e 
    \\
    & \geq & d_X(c_k(s+\e),c_k(t-\e)) - 4\e
    \\
    & = & ( (t - \e) - (s + \e) ) - 4\e
    \\
    & = & t - s - 6\e.         
  \end{eqnarray*}
  This implies $d_X(f(s),f(t)) = t-s$, because  $\e$ was arbitrarily 
  small.  
\end{proof}

\begin{proposition}
  \label{prop:FS-is-proper}
  If $(X,d_X)$ is a proper metric space,
  then $(\FS(X),d_{FS(X)})$ is a proper metric space.
\end{proposition}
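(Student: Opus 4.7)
The plan is to show that every closed ball in $\FS(X)$ is sequentially compact. Fix $c_0 \in \FS(X)$ and $R > 0$, and let $(c_n)_{n \in \IN}$ be a sequence in $\overline{B}_R(c_0) \subseteq \FS(X)$. We aim to extract a subsequence converging in $d_{\FS(X)}$ to some $c \in \overline{B}_R(c_0)$, which we will do via Arzelà--Ascoli applied to $(c_n)$ viewed as maps $\IR \to X$.

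First, we check the hypotheses of Arzelà--Ascoli. For pointwise precompactness, fix $t \in \IR$; by Lemma~\ref{lem:comparing_d_FS-d_X}~\ref{lem:comparing_d_FS-d_X:general} we have
\[
d_X\bigl(c_n(t), c_0(t)\bigr) \;\le\; e^{|t|} \cdot d_{\FS(X)}(c_n, c_0) + 2 \;\le\; e^{|t|} R + 2,
\]
so the set $\{c_n(t) \mid n \in \IN\}$ is contained in the closed ball $\overline{B}_{e^{|t|}R + 2}\bigl(c_0(t)\bigr)$, which is compact because $X$ is proper. For equicontinuity, observe that every generalized geodesic is $1$-Lipschitz (since it is an isometric embedding on $I_c$ and locally constant outside), and therefore so is each $c_n$.

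Now apply the Arzelà--Ascoli theorem on each compact interval $[-k,k]$, $k \in \IN$, and use a standard diagonal argument to pass to a subsequence, still called $(c_n)$, which converges uniformly on every compact subset of $\IR$ to some continuous map $c \colon \IR \to X$. By Lemma~\ref{lem:FS-closed}, the limit $c$ is itself a generalized geodesic, i.e., $c \in \FS(X)$. By Proposition~\ref{prop:uniform-convergence-on-compact}, uniform convergence on compact subsets implies convergence in the metric $d_{\FS(X)}$, so $c_n \to c$ in $\FS(X)$. Since $\overline{B}_R(c_0)$ is closed in $\FS(X)$, we obtain $c \in \overline{B}_R(c_0)$.

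The only step that deserves care is verifying the Arzelà--Ascoli hypotheses, but both reduce immediately to properties already established: pointwise boundedness from Lemma~\ref{lem:comparing_d_FS-d_X}~\ref{lem:comparing_d_FS-d_X:general} together with properness of $X$, and equicontinuity from the fact that generalized geodesics are $1$-Lipschitz. Closedness of $\FS(X)$ in the compact-open topology (Lemma~\ref{lem:FS-closed}) and the equivalence of the two notions of convergence (Proposition~\ref{prop:uniform-convergence-on-compact}) then bridge directly back to the $d_{\FS(X)}$-topology, completing the argument.
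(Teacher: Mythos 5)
Your proposal is correct and follows essentially the same route as the paper: reduce properness to sequential compactness of closed balls, obtain a uniformly-on-compacta convergent subsequence via Arzel\`a--Ascoli (pointwise precompactness from Lemma~\ref{lem:comparing_d_FS-d_X}~\ref{lem:comparing_d_FS-d_X:general} and properness of $X$, equicontinuity from the $1$-Lipschitz property of generalized geodesics), conclude the limit lies in $\FS(X)$ by Lemma~\ref{lem:FS-closed}, and pass back to $d_{\FS(X)}$-convergence via Proposition~\ref{prop:uniform-convergence-on-compact}. The paper is slightly more economical, applying Lemma~\ref{lem:comparing_d_FS-d_X}~\ref{lem:comparing_d_FS-d_X:general} only at $t=0$ and invoking the Arzel\`a--Ascoli statement from Bridson--Haefliger directly, but the underlying argument is identical.
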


\begin{proof}
  Let $R > 0$ and $c \in \FS(X)$. It suffices to show
  that the closed ball $\overline{B}_R(c)$
  in $\FS(X)$ is sequentially compact since any metric space satisfies
  the second countability axiom.
  Let $(c_n)_n \in \IN$ be a sequence in $\overline{B}_R(c)$.
  By Lemma~\ref{lem:comparing_d_FS-d_X}~\ref{lem:comparing_d_FS-d_X:general}
  there is $R' > 0$ such that $c_n(0) \in \overline{B}_{R'}(c(0))$.
  By assumption $\overline{B}_{R'}(c(0))$ is compact.
  Now we can apply the Arzel\`a-Ascoli theorem 
  (see for example~\cite[I.3.10, p.36]{Bridson-Haefliger(1999)}).
  Thus after passing to a subsequence there is $d \colon \IR \to X$
  such that $c_n \to d$ uniformly on compact subsets.
  Lemma~\ref{lem:FS-closed} implies $d \in \FS(X)$.
\end{proof}

\begin{lemma}
  \label{lem:evaluation-is-proper}   Let $(X,d_X)$ be a 
   proper metric space and $t_0 \in \IR$. 
   Then the evaluation map $\FS(X) \to X$ defined by
   $c \mapsto c(t_0)$ is uniformly continuous and proper. 
\end{lemma}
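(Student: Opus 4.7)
The plan is to derive both parts from previous lemmas combined with basic compactness arguments.

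For \emph{uniform continuity}, I would appeal directly to Lemma~\ref{lem:comparing_d_FS-d_X}~\ref{lem:comparing_d_FS-d_X:small}, which gives
\[
d_X(c(t_0),d(t_0)) \le \sqrt{4e^{|t_0|+1}}\cdot \sqrt{d_{\FS}(c,d)}
\]
whenever $d_{\FS}(c,d) \le 2e^{-|t_0|-1}$. Given $\e > 0$, setting $\delta := \min\bigl(2e^{-|t_0|-1},\ \e^2/(4e^{|t_0|+1})\bigr)$ produces a modulus of continuity that does not depend on $c$ or $d$, so $\ev_{t_0}$ is uniformly continuous.

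For \emph{properness}, the strategy is to show that for any compact $K \subseteq X$ the preimage $\ev_{t_0}^{-1}(K)$ is closed and bounded in $\FS(X)$, and then invoke Proposition~\ref{prop:FS-is-proper} to conclude that this preimage is compact. Closedness is immediate from continuity. For boundedness, fix any $c_0 \in \ev_{t_0}^{-1}(K)$ (if the preimage is empty there is nothing to prove) and let $D := \operatorname{diam}(K) + 1 < \infty$. For any $c \in \ev_{t_0}^{-1}(K)$, both $c$ and $c_0$ are $1$-Lipschitz, so by the triangle inequality
\[
d_X(c(t), c_0(t)) \le d_X(c(t), c(t_0)) + d_X(c(t_0), c_0(t_0)) + d_X(c_0(t_0), c_0(t)) \le D + 2|t - t_0|.
\]
Plugging this into the definition of $d_{\FS}$ gives
\[
d_{\FS}(c, c_0) \le \int_\IR \frac{D + 2|t - t_0|}{2e^{|t|}}\, dt =: R,
\]
a finite constant independent of $c$. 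Hence $\ev_{t_0}^{-1}(K) \subseteq \overline{B}_R(c_0)$, which is compact by Proposition~\ref{prop:FS-is-proper}; being closed inside a compact set, $\ev_{t_0}^{-1}(K)$ is itself compact.

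The only mildly subtle point is the boundedness estimate, but it follows cleanly from the $1$-Lipschitz nature of generalized geodesics together with the exponential weight in the definition of $d_{\FS}$, which guarantees integrability of the affine bound $D + 2|t - t_0|$. No obstacle arises beyond assembling these prior results.
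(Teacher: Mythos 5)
Your proof is correct and follows essentially the same route as the paper: uniform continuity via Lemma~\ref{lem:comparing_d_FS-d_X}~\ref{lem:comparing_d_FS-d_X:small}, and properness by the same Lipschitz estimate $d_X(c(t),d(t)) \le r + 2|t-t_0|$ feeding into the weighted integral to bound preimages, then invoking Proposition~\ref{prop:FS-is-proper}. The only cosmetic difference is that the paper phrases it as showing preimages of closed balls have finite diameter, while you work with arbitrary compact $K$ and its diameter; the estimate and conclusion are identical.
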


\begin{proof}
  The map is uniformly continuous by 
  Lemma~\ref{lem:comparing_d_FS-d_X}~\ref{lem:comparing_d_FS-d_X:small}.
  To show that is is also proper, it suffices by 
  Proposition~\ref{prop:FS-is-proper} to show that preimages 
  of closed balls have finite diameter.
  If $d_X(c(t_0),d(t_0)) \le r$, then 
  $d_X(c(t),d(t)) \leq r + 2 |t - t_0|$.
  Thus
  \begin{equation*}
    d_\FS(c,d) \leq \int_\IR \frac{r + 2 |t-t_0|}{2e^{|t|}}dt
     \quad \mbox{provided} \quad d_X\bigl(c(t_0),d(t_0)\bigr) \leq r.
  \end{equation*}
\end{proof}

\begin{proposition}
  \label{prop:cocompact}
  Let $G$ act isometrically and proper on the proper metric 
  space $(X,d_X)$.
  Then the action of $G$ on $(\FS(X),d_\FS)$ is also
  isometric and proper.
  If the action of $G$ on $X$ is in addition cocompact,
  then this is also true for the action on $\FS(X)$.
\end{proposition}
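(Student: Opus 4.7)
The plan is to transport the $G$-action from $X$ to $\FS(X)$ via the rule $(g\cdot c)(t) := g\cdot c(t)$ and to exploit the evaluation map $\ev_0 \colon \FS(X) \to X$, $c \mapsto c(0)$, which is $G$-equivariant by construction and is uniformly continuous and proper by Lemma~\ref{lem:evaluation-is-proper}.

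\textbf{Isometric action.} Since every $g \in G$ acts by isometries on $X$, we have $d_X(g\cdot c(t), g\cdot d(t)) = d_X(c(t),d(t))$ for all $t \in \IR$, so plugging into the definition of $d_{\FS(X)}$ in Definition~\ref{def_flow_space_FS(X)} shows $d_{\FS(X)}(g\cdot c,g\cdot d) = d_{\FS(X)}(c,d)$. This is a direct calculation, and I would just state it.

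\textbf{Proper action.} Since $X$ is proper, so is $\FS(X)$ by Proposition~\ref{prop:FS-is-proper}, and in particular $\FS(X)$ is locally compact. By the convention, it thus suffices to show that $\{g \in G \mid g\cdot K \cap K \neq \emptyset\}$ is finite for every compact $K \subseteq \FS(X)$. Set $K' := \ev_0(K) \subseteq X$. Then $K'$ is compact, and equivariance of $\ev_0$ yields
\[
\{g \in G \mid g\cdot K \cap K \neq \emptyset\}
 \subseteq
\{g \in G \mid g\cdot K' \cap K' \neq \emptyset\},
\]
since $g\cdot c \in K$ and $c \in K$ imply $g\cdot c(0) = \ev_0(g\cdot c) \in K'$ and $c(0) \in K'$. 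The right-hand side is finite because the action of $G$ on the proper metric space $X$ is proper.

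\textbf{Cocompact action.} Assume $G \backslash X$ is compact. Because $X$ is proper (hence locally compact), there is a compact subset $L \subseteq X$ with $G \cdot L = X$. Set $L' := \ev_0^{-1}(L) \subseteq \FS(X)$. Since $\ev_0$ is proper by Lemma~\ref{lem:evaluation-is-proper}, $L'$ is compact. Given any $c \in \FS(X)$, pick $g \in G$ with $g^{-1}\cdot c(0) \in L$; then $\ev_0(g^{-1}\cdot c) = g^{-1}\cdot c(0) \in L$, so $g^{-1}\cdot c \in L'$ and hence $c \in g\cdot L'$. Thus $G\cdot L' = \FS(X)$, and by the same convention this is equivalent to $G\backslash \FS(X)$ being compact.

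There is no real obstacle; the entire proposition follows formally once one has the $G$-equivariance of $\ev_0$ together with its properness (Lemma~\ref{lem:evaluation-is-proper}) and the fact that $\FS(X)$ inherits properness from $X$ (Proposition~\ref{prop:FS-is-proper}). If anything requires slight care it is verifying that one may use the compact-subset characterisations of proper/cocompact group actions, which is legitimate because $\FS(X)$ is locally compact.
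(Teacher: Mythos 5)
Your proof is correct and follows exactly the same route as the paper's: deduce isometry by direct computation, then use the $G$-equivariant, continuous, proper evaluation map $c \mapsto c(0)$ (Lemma~\ref{lem:evaluation-is-proper}) to transport properness and cocompactness from $X$ to $\FS(X)$. The paper merely states this more tersely; your write-up supplies the details it leaves implicit.
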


\begin{proof}
  The action of $G$ on $\FS(X)$ is isometric.
  The map $\FS(X) \to X$ defined by $c \mapsto c(0)$ 
  is $G$-equivariant, continuous and proper by 
  Lemma~\ref{lem:evaluation-is-proper}.
  The existence of such a map implies that the $G$-action
  on $\FS(X)$ is also proper.
  This also implies that the action of $G$ on $\FS(X)$
  is cocompact, provided that the action on $X$ is cocompact.
\end{proof}

\begin{lemma}
  \label{lem:FS-IR-is-closed}
  Let $(X,d_X)$ be a metric space.
  Then $\FS(X)^\IR$ is closed in $\FS(X)$.
\end{lemma}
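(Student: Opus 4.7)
The plan is to show that $\FS(X)^\IR$ is sequentially closed in $\FS(X)$; since $(\FS(X),d_\FS)$ is a metric space, sequential closedness coincides with closedness.

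Let $(c_n)_{n \in \IN}$ be a sequence in $\FS(X)^\IR$ converging to some $c \in \FS(X)$ with respect to $d_\FS$. I need to verify that $c$ is constant. By Lemma~\ref{lem:comparing_d_FS-d_X}~\ref{lem:comparing_d_FS-d_X:small} (or equivalently Proposition~\ref{prop:uniform-convergence-on-compact}), convergence in $d_\FS$ implies pointwise convergence, so for any $s,t \in \IR$ one has $c_n(s) \to c(s)$ and $c_n(t) \to c(t)$ in $X$. Since each $c_n$ is constant, $c_n(s) = c_n(t)$ for every $n$, and passing to the limit gives $c(s) = c(t)$. As $s,t \in \IR$ were arbitrary, $c$ is constant, i.e., $c \in \FS(X)^\IR$.

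There is no genuine obstacle here: the lemma is essentially the observation that the pointwise limit of constant maps is constant, combined with the fact already recorded in Lemma~\ref{lem:comparing_d_FS-d_X}~\ref{lem:comparing_d_FS-d_X:small} that the evaluation maps $c \mapsto c(t_0)$ are continuous. One could alternatively phrase the argument in terms of the continuous function $\FS(X) \to \IR$, $c \mapsto d_X(c(0),c(t))$, and note that $\FS(X)^\IR$ is cut out as the intersection over $t \in \IR$ of the zero sets of these functions, but the direct sequential argument is shorter.
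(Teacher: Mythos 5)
Your proof is correct, and it takes a genuinely different route from the paper's. You argue that $\FS(X)^\IR$ is sequentially closed by invoking continuity of the evaluation maps $c \mapsto c(t_0)$ (Lemma~\ref{lem:comparing_d_FS-d_X}~\ref{lem:comparing_d_FS-d_X:small}) and passing to the limit in the identity $c_n(s) = c_n(t)$; this is conceptually clean and leverages machinery already established. The paper instead shows directly that the complement $\FS(X) - \FS(X)^\IR$ is open: given a non-constant $c$, it picks $t_0, t_1$ with $c(t_0) \neq c(t_1)$, sets $\delta = d_X(c(t_0),c(t_1))/2$, and uses the triangle inequality plus the $1$-Lipschitz property of generalized geodesics to extract an explicit $\e > 0$ (a lower bound on an integral) such that $B_\e(c)$ contains no constant geodesics. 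The paper's argument is self-contained and gives an explicit radius; yours is shorter and relies on the prior continuity lemma. Both are valid proofs of the same statement.
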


\begin{proof}
  Note that $\FS(X)^\IR$ is the space of constant generalized geodesics.
  Let $c \in \FS(X) - \FS(X)^\IR$.
  Pick $t_0$, $t_1 \in \IR$ such that $c(t_0) \neq c(t_1)$.
  Set $\delta := d_X(c(t_0),c(t_1)) / 2$.
  For $x \in X$ then  $d_X(x,c(t_0)) \geq \delta$
  or $d_X(x,c(t_1)) \geq \delta$.
  Denote by $c_x$ the constant generalized geodesic at $x$.
  If $d_X(x,c(t_0)) \geq \delta$, then
  $d_X(x,c(t)) \geq \delta/2$ if $t \in [t_0-\delta/2,t_0+\delta/2]$.
  Thus in this case
  \begin{equation*}
    d_\FS(c_x,c) \geq \int_{t_0 - \delta/2}^{t_0 + \delta/2} 
           \frac{\delta/2}{2e^{|t|}}dt =: \e_0.
  \end{equation*}
  Similarly, 
  \begin{equation*}
    d_\FS(c_x,c) \geq \int_{t_1 - \delta/2}^{t_1 + \delta/2} 
           \frac{\delta/2}{2e^{|t|}}dt =: \e_1,
  \end{equation*}
  if $d_X(x,c(t_1)) \geq \delta/2$.
  Hence $B_{\e}(c) \cap \FS(X)^\IR = \emptyset$
  if we set $\e := \min \{ \e_0 /2,\e_1/2 \}$.
\end{proof}

\begin{notation}
  \label{not:FS-0}
  Let $X$ be a metric space.
  For $c \in \FS(X)$ and $T \in [0,\infty]$, define $c|_{[-T,T]} \in \FS(X)$ by
  \begin{equation*}
    c|_{[-T,T]}(t) := \begin{cases}
                      c(-T) & \quad \mbox{if} \;  t \le -T; \\
                       c(t) & \quad \mbox{if} \; -T \le t \le T; \\ 
                       c(T) & \quad \mbox{if} \; t \ge T.
                     \end{cases}       
  \end{equation*}
  Obviously $c|_{[-\infty,\infty]} = c$ and
  if $c \notin \FS(X)^\IR$ and $(-T,T) \cap (c_-,c_+) \neq \emptyset$ 
  then
  $\bigl(c|_{[-T,T]}\bigr)_{-} = \max\{c_{-}, -T \}$ and 
  $\bigl(c|_{[-T,T]}\bigr)_{+} = \min\{c_{+}, T\}$.

  We denote by 
  $$\FS(X)_f := \left\{ \left.c \in \FS(X)-\FS(X)^\IR \; \right|\; 
                     c_{-} > -\infty,c_+ < \infty \right\} \cup
                                               \FS(X)^\IR$$
  the subspace of finite geodesics.
\end{notation}

\begin{lemma}
  \label{lem:push-to-finite-geodesic}
  Let $(X,d_X)$ be a metric space.
  The map $H \colon \FS(X) \x [0,1] \to \FS(X)$
  defined by $H_\tau(c) := c|_{[\ln(\tau),-\ln(\tau)]}$
  is continuous and satisfies $H_0 = \id_{\FS(X)}$ and
  $H_\tau(c) \in \FS(X)_f$ for $\tau > 0$.
\end{lemma}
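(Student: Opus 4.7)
The first two claims follow directly from Notation~\ref{not:FS-0}. With the conventions $\ln 0 = -\infty$ and $-\ln 0 = +\infty$, one has $H_0(c) = c|_{[-\infty,\infty]} = c$. For $\tau > 0$ both $\ln \tau$ and $-\ln \tau$ are finite, so the formulas in Notation~\ref{not:FS-0} give either $H_\tau(c) \in \FS(X)^\IR$, or $(H_\tau(c))_- = \max\{c_-, \ln\tau\} \geq \ln\tau > -\infty$ and $(H_\tau(c))_+ = \min\{c_+, -\ln\tau\} \leq -\ln\tau < \infty$. In either case $H_\tau(c) \in \FS(X)_f$.

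For continuity, the plan is to invoke Proposition~\ref{prop:uniform-convergence-on-compact} and reduce to the following: whenever $c_n \to c$ in $\FS(X)$ and $\tau_n \to \tau$ in $[0,1]$, the sequence $H_{\tau_n}(c_n)$ converges to $H_\tau(c)$ uniformly on every compact interval $[-M,M] \subset \IR$. It will be convenient to set $T := -\ln\tau \in [0,\infty]$ (so $T_n \to T$ in the order topology on $[0,\infty]$) and to introduce the clamp function $\phi_T(t) := \max\bigl\{-T,\min\{T,t\}\bigr\}$, which rewrites $H_\tau(c)(t) = c(\phi_T(t))$.

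I would then split into two cases. If $\tau = 0$, so $T = +\infty$, then for any fixed $M > 0$ we have $T_n \geq M$ once $n$ is large enough, and this forces $\phi_{T_n}(t) = t$ for all $t \in [-M,M]$; on that interval $H_{\tau_n}(c_n)$ coincides with $c_n$, so uniform convergence on $[-M,M]$ follows directly from $c_n \to c$ via Proposition~\ref{prop:uniform-convergence-on-compact}. If $\tau > 0$, so $T < \infty$, then for $n$ large and $t \in [-M,M]$ the values $\phi_{T_n}(t)$ all lie in the fixed compact set $K' := [-T-1,T+1] \cup [-M,M]$. A triangle inequality
\[
d_X\bigl(c_n(\phi_{T_n}(t)),c(\phi_T(t))\bigr)
\leq d_X\bigl(c_n(\phi_{T_n}(t)),c(\phi_{T_n}(t))\bigr)
  + d_X\bigl(c(\phi_{T_n}(t)),c(\phi_T(t))\bigr)
\]
then bounds the first summand uniformly in $t$ by the uniform convergence $c_n \to c$ on $K'$, while the second summand is at most $|\phi_{T_n}(t) - \phi_T(t)| \leq |T_n - T| \to 0$, using that generalized geodesics are $1$-Lipschitz and that $T \mapsto \phi_T(t)$ is $1$-Lipschitz.

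There is no serious obstacle in this argument; the only point worth a moment's care is the case $\tau = 0$, where $T = +\infty$ and $\phi_{T_n}$ degenerates to the identity on bounded intervals, which is precisely what the two-case split is designed to isolate.
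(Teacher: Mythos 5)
Your proof is correct and follows essentially the same route as the paper: reduce to uniform convergence on compact sets via Proposition~\ref{prop:uniform-convergence-on-compact}, separate the cases $\tau=0$ and $\tau>0$, and in the latter case apply a triangle inequality together with the $1$-Lipschitz property of generalized geodesics and of the clamping in $T$. The only cosmetic difference is which middle term you split at in the triangle inequality (you use $c(\phi_{T_n}(t))$, the paper uses $c_n(\phi_T(t))$); both reduce to the same two estimates, and your auxiliary compact set $K'$ is in fact redundant since $|\phi_{T_n}(t)|\leq|t|$ already keeps the argument inside $[-M,M]$.
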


\begin{proof}
  Observe that for $c \in \FS(X)$ and $T,T' \geq 0$
  \begin{equation*}
    d_X\bigl(c_{[-T,T]}(t),c_{[-T',T']}(t)\bigr) \leq |T-T'| 
        \quad \mbox{for all} \; t \in \IR. 
  \end{equation*}
  Recall from Proposition~\ref{prop:uniform-convergence-on-compact}
  that the topology on $\FS(X)$ is the topology
  of uniform convergence on compact subsets.
  Let $c_n \to c$ uniformly on compact subsets, and 
  $\tau_n \to \tau$.
  Let $\alpha > 0$.
  We need to show that 
  $c_n|_{[\ln(\tau_n),-\ln(\tau)]} \to c_{[\ln(\tau),-\ln(\tau)]}$
  uniformly on $[-\alpha,\alpha]$.
  
  Consider first the case $\tau = 0$.
  Then $c = c|_{[\ln(\tau),-\ln(\tau)]}$.
  Moreover, $-\ln(\tau_n) > \alpha$ for sufficient large $n$. 
  Thus $c_n(t) = c_n|_{[\ln(\tau_n),-\ln(\tau)]}(t)$ for 
  such $n$ and $t \in [-\alpha,\alpha]$. 
  This implies the claim for $\tau = 0$.

  Next consider the case $\tau \in (0,1]$.
  Let $\e > 0$.
  There is $n_0 = n_0(\e ) \in \IN$ such that 
  \begin{equation*}
    \begin{array}{lclcl}
       d_X(c_n(t),c(t)) & \leq & \e 
       & \mbox{if} & n \geq n_0, t \in [-\alpha,\alpha]; 
       \\[1ex]
       |\ln(\tau) - \ln(\tau_n)| & \leq & \e
       & \mbox{if} & n \geq n_0.  
    \end{array}
  \end{equation*}
  Then for $t \in [-\alpha,\alpha]$, $n \geq n_0$,
  \begin{eqnarray*}
    \lefteqn{d_X\bigl(c_n|_{[\ln(\tau_n),-\ln(\tau_n)]}(t),
                     c|_{[\ln(\tau),-\ln(\tau)]}(t)\bigr)} & & \\
    & \leq &
    d_X\bigl(c_n|_{[\ln(\tau_n),-\ln(\tau_n)]}(t),c_n|_{[\ln(\tau),-\ln(\tau)]}(t)\bigr)
    \\ & & \hspace{5em}
    + d_X\bigl(c_n|_{[\ln(\tau),-\ln(\tau)]}(t),c|_{[\ln(\tau),-\ln(\tau)]}(t)\bigr)
    \\
    & \leq &
    2 \e.
  \end{eqnarray*}
  This implies the claim in the second case because $\e$ was arbitrary.
\end{proof}


\typeout{------ The flow space associated to a CAT(0)-space ------}

\section{The flow space associated to a CAT(0)- space}
\label{sec:The_flow_space_associated_to_a_CAT(0)-space}

\begin{summary*}
  In this section we study $\FS(X)$ further in the 
  case where $X$ is a $\CAT(0)$-space. 
  Let $\overline{X}$ be the bordification of $X$.  
  We construct an injective continuous map from
  $\FS(X) - \FS(X)^\IR$ to 
  $\overline{\IR} \times \overline{X} \times X \times \overline{X}
    \times \overline{\IR}$ which is a homeomorphism onto its image
  (see~Proposition~\ref{prop:embedding-of-FS}). 
  It sends a generalized geodesic $c$ to 
  $\bigl(c_-,c(-\infty),c(0),c(\infty),c_+\bigr)$, where
  $c(-\infty)$ and $c(\infty)$ are the two endpoints of $c$. 
  This is used to show that $\FS(X) - \FS(X)^\IR$ 
  has finite dimension if $X$ has
  (see Proposition~\ref{pro:dim-for_FS-FSR}), and that 
  $\FS(X)-\FS(X)^{\IR}$ is locally connected 
  (see Proposition~\ref{prop:flow-space-is-locally-connected}).
  We will relate our construction to the geodesic flow on the sphere
  tangent bundle of a simply connected Riemannian manifold with
  non-positive sectional curvature in
  Subsection~\ref{subsec:example_non-pos-curv}.
\end{summary*}

For the definition of a $\CAT(0)$-space we refer
to~\cite[II.1.1 on p.~158]{Bridson-Haefliger(1999)}, 
namely to be a geodesic space all
of whose geodesic triangles satisfy the $\CAT(0)$-inequality.  
We will follow the notation and the description of  
the \emph{bordification} $\overline{X} = X \cup \partial X$ of
a $\CAT(0)$-space $X$ given in~\cite[Chapter II.8]{Bridson-Haefliger(1999)}.  
The definition of the topology of this bordification is briefly reviewed in 
Remark~\ref{rem:cone-topology}.
In this section we will use the
following convention.

\begin{convention}
  \label{conv:cat-0-satisfy}
Let 
\begin{itemize}
 \item $X$ be a complete $\CAT(0)$-space;
 \item $\overline{X} := X \cup \dd X$ be the bordification of $X$,
      see \cite[Chapter~II.8]{Bridson-Haefliger(1999)}.  
\end{itemize}
\end{convention}


\subsection{Evaluation of generalized geodesics at infinity}
\label{subsec:evaluation-at-infinity}

\begin{definition} \label{def:c-infty}
  For $c \in \FS(X)$  we set $c(\pm \infty) := \lim_{t \to \pm \infty} c(t)$,
  where the limit is taken in $\overline{X}$.
\end{definition}

Since $X$ is by assumption a $\CAT(0)$-space,
we can find for  $x_- \in X$ and $x_+ \in \overline{X}$  
a generalized geodesic
$c \colon \IR \to X$ with $c(\pm \infty) = x_{\pm}$ 
(see~\cite[II.8.2 on p.~261]{Bridson-Haefliger(1999)}).
It is not true in general that for two different points $x_-$ and 
$x_+$ in $\partial X$ there is a geodesic
$c$ with $c(-\infty) = x_-$ and $c(\infty) = x_+$.

\begin{remark}
  [Cone topology on $\overline{X}$.]
  \label{rem:cone-topology}
  A \emph{generalized geodesic ray} is a generalized geodesic
  $c$ that is either a constant generalized geodesic or a non-constant
  generalized geodesic with $c_- = 0$. 
  Fix a base point $x_0 \in X$.
  For every $x \in \overline{X}$, there is a unique
  generalized geodesic ray $c_x$ such that $c(0) = x_0$
  and $c(\infty) = x$,  
  see~\cite[II.8.2 on p.261]{Bridson-Haefliger(1999)}.
  Define for $r > 0$
  \begin{equation*}
    \rho_r = \rho_{r,x_0} \colon \overline{X} \to \overline{B}_r(x_0)      
  \end{equation*}
  by $\rho_r(x) := c_x(r)$.
  The sets $(\rho_r)^{-1}(V)$ with $r > 0$, 
  $V$ an open subset of $\overline{B}_r(x_0)$
  are a basis for the cone topology on $\overline{X}$,
  see~\cite[II.8.6 on p.263]{Bridson-Haefliger(1999)}.
  A map $f$ whose target is $\overline{X}$ is continuous if and
  only if $\rho_r \circ f$ is continuous for all $r$.
  The cone topology is independent of the choice of base point,
  see~\cite[II.8.8 on p.264]{Bridson-Haefliger(1999)}.
\end{remark}

\begin{lemma}
  \label{lem:c-plus-minus-infinty-cont}
  The maps
  \begin{eqnarray*}
    FS(X) - \FS(X)^{\IR} & \to & \overline{X}, \quad c \mapsto c(-\infty);
    \\
    FS(X) - \FS(X)^{\IR} & \to & \overline{X}, \quad c \mapsto c(\infty),
  \end{eqnarray*}
  are continuous.
\end{lemma}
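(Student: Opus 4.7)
My plan is to reduce to continuity of $c \mapsto \rho_r(c(\infty)) \in X$ for each $r > 0$, use Lemma~\ref{lem:c-plus-minus-cont} to split into the cases $c_+ < \infty$ and $c_+ = \infty$, and in the second case perform a double-limit interchange via CAT$(0)$ convexity of distance between geodesic rays based at a common point. By symmetry it suffices to treat $c \mapsto c(\infty)$. Fix a base point $x_0 \in X$. By Remark~\ref{rem:cone-topology}, continuity into $\overline{X}$ with the cone topology is equivalent to continuity of the composite $c \mapsto \rho_r(c(\infty)) \in X$ for every $r > 0$. Given $c_n \to c$ in $\FS(X) - \FS(X)^\IR$, Proposition~\ref{prop:uniform-convergence-on-compact} supplies uniform convergence on compact subsets of $\IR$, and Lemma~\ref{lem:c-plus-minus-cont} supplies $(c_n)_+ \to c_+$ in $\overline{\IR}$.

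In the case $c_+ < \infty$, I would argue as follows. Then $c(\infty) = c(c_+) \in X$, and for large $n$ also $c_n(\infty) = c_n((c_n)_+) \in X$. The $1$-Lipschitz bound $d_X(c_n(s), c_n(t)) \leq |s-t|$ combined with uniform convergence on a compact neighborhood of $c_+$ yields $c_n(\infty) \to c(\infty)$ in $X$, and continuity of $\rho_r$ on $\overline{X}$ then closes this case.

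For the case $c_+ = \infty$, note that $c(\infty) \in \dd X$, $(c_n)_+ \to \infty$, and $d_X(x_0, c(T)) \to \infty$ as $T \to \infty$ because $c$ is an isometric embedding on $(c_-, \infty)$. Given $\epsilon > 0$, I would combine via the triangle inequality three estimates: (a)~$d_X(\rho_r(c(T)), \rho_r(c(\infty))) < \epsilon/3$ for $T$ large, from continuity of $\rho_r$ on $\overline{X}$ and $c(T) \to c(\infty)$ in the cone topology; (b)~$d_X(\rho_r(c_n(T)), \rho_r(c(T))) < \epsilon/3$ for $n$ large at fixed $T$, from $c_n(T) \to c(T)$ in $X$ and continuity of $\rho_r$; and (c)~a uniform estimate $d_X(\rho_r(c_n(T)), \rho_r(c_n(\infty))) < \epsilon/3$ for all large $n$ once $T$ is large enough.

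The hard step will be (c). Writing $\alpha_n$ for the generalized geodesic ray from $x_0$ to $c_n(\infty)$ and $\beta_n$ for the generalized geodesic ray from $x_0$ to $c_n(T)$, both based at $x_0$, CAT$(0)$ convexity of the function $s \mapsto d_X(\alpha_n(s), \beta_n(s))$ (vanishing at $s = 0$) gives, for $L_n := d_X(x_0, c_n(T))$ and $r \leq L_n$,
\[
d_X\bigl(\rho_r(c_n(T)), \rho_r(c_n(\infty))\bigr) \;\leq\; \tfrac{r}{L_n}\, d_X\bigl(c_n(T), \alpha_n(L_n)\bigr).
\]
Uniform convergence on $[0,T]$ gives $L_n \to d_X(x_0, c(T))$, which tends to infinity with $T$, so the prefactor $r/L_n$ becomes arbitrarily small; the quantity $d_X(c_n(T), \alpha_n(L_n))$ should be bounded uniformly in $n$ because $c_n|_{[0,\infty)}$ and $\alpha_n$ are asymptotic rays sharing the common endpoint $c_n(\infty)$ at infinity, while $c_n(0)$ stays bounded since $c_n(0) \to c(0)$. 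Assembling (a)--(c) via the triangle inequality finishes the case $c_+ = \infty$.
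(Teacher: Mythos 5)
Your proof is correct in outline and takes a genuinely different route from the paper. The paper does not split on $c_+$: it moves the base point onto $c$ itself (setting $x_0 := c(s_0)$ with $s_0 := \max\{0,c_-\}+1$), introduces geodesics $\sigma^d_t$ from $x_0$ to $d(s_0+t)$, and appeals to Lemma~\ref{lem:sigma_t-converge} — a quantitative, uniform-in-$d$ version of the convergence argument from \cite{Bridson-Haefliger(1999)}, phrased for generalized geodesic rays so that finite and infinite $c_+$ are handled identically — to obtain $d_X\bigl(\sigma^d_T(r),\rho_r(d(\infty))\bigr)\le\e$ simultaneously for all $d$ near $c$, then finishes with a single CAT$(0)$ comparison in the triangle with vertices $x_0$, $c(s_0+T)$, $d(s_0+T)$. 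You instead keep a fixed base point, split on $c_+$, and in the infinite case replace the appeal to Lemma~\ref{lem:sigma_t-converge} by a direct convexity estimate in the triangle with vertices $x_0$, $c_n(T)$, $c_n(\infty)$. The underlying CAT$(0)$ tools are the same; the paper's version is tighter because Lemma~\ref{lem:sigma_t-converge} absorbs the uniformity and avoids the case split. When writing up (c), two points need care. First, the inequality $d_X\bigl(\alpha_n(r),\beta_n(r)\bigr)\le\frac{r}{L_n}\,d_X\bigl(\alpha_n(L_n),c_n(T)\bigr)$ presupposes that $\alpha_n$ is a genuine unit-speed geodesic on all of $[0,L_n]$, i.e.\ $d_X(x_0,c_n(\infty))\ge L_n$; the function $s\mapsto d_X(\alpha_n(s),\beta_n(s))$ is in general not convex past the point where one of the two generalized geodesic rays becomes constant. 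This hypothesis does hold for all large $n$ because $(c_n)_+\to\infty$, but it must be recorded. Second, even in the case $c_+=\infty$ each individual $c_n$ can have $(c_n)_+<\infty$, so $c_n(\infty)$ may lie in $X$ rather than $\partial X$; your justification via ``asymptotic rays sharing the common endpoint at infinity'' should be rephrased to cover geodesic segments to a finite endpoint (the needed distance bound still follows from CAT$(0)$ convexity), and the passage to $d_X\bigl(c_n(T),\alpha_n(L_n)\bigr)\le 2\,d_X(x_0,c_n(0))$ requires a short triangle-inequality detour — comparing $\alpha_n(L_n)$ with $\alpha_n(T')$ for the appropriate shift $T'$, then $\alpha_n(T')$ with the reparameterized ray issuing from $c_n(0)$, and using $|L_n-T'|\le d_X(x_0,c_n(0))$ — which your sketch leaves implicit. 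None of these points is fatal, but they do need to be filled in.
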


The proof of this Lemma depends on the following result.

\begin{lemma}
  \label{lem:sigma_t-converge}
  Given $\e > 0$, $a > 0$ and $s > 0$, there exists a constant 
  $T = T(\e,a,s) > 0$ such that the following in true: 
  if $x$, $x' \in X$ with $d_X(x,x') \leq a$,
  if $c \colon \IR \to X$ is a 
  generalized geodesic ray with $c(0) = x$, and if
  $\sigma_t \colon [0,d(x',c(t)] \to X$ is the geodesic 
  from $x'$ to $c(t)$, then $d_X(\sigma_t(s),\sigma_{t+t'}(s)) < \e$
  for all $t \geq T$ and all $t' \geq 0$.
\end{lemma}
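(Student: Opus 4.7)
The plan is to apply the $\CAT(0)$-comparison inequality to the geodesic triangle with vertices $x'$, $c(t)$, $c(t+t')$ and then to estimate the resulting Euclidean angle at $x'$. First I perform the reductions. The case where $c$ is constant is trivial, since then $\sigma_\tau$ is independent of $\tau$ and the distance in question is zero. If $c$ is non-constant and $t\ge c_+$, then $c(t)=c(t+t')=c(c_+)$, and again the distance is zero. Hence it suffices to treat the case $0 \le t \le t+t' \le c_+$; the general situation reduces to this after replacing $t+t'$ by $\min(t+t',c_+)$, which does not alter $\sigma_{t+t'}$. Under this assumption, $c$ restricts to an arc-length parametrization on $[0,t+t']$, so $d_X\bigl(c(t),c(t+t')\bigr) = t'$ and $d_X\bigl(x,c(\tau)\bigr) = \tau$ for $\tau \in \{t,t+t'\}$. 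By the triangle inequality, $A_\tau := d_X\bigl(x',c(\tau)\bigr) \in [\tau-a, \tau+a]$, and in particular $A_{t+t'}-A_t \in [t'-2a, t'+2a]$.

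Next I apply the $\CAT(0)$-comparison. Pick a comparison triangle $\bar x'$, $\overline{c(t)}$, $\overline{c(t+t')}$ in $\IE^2$ with matching side lengths, and let $\bar\theta$ be its angle at $\bar x'$. Provided $T \ge s+a$ one has $s \le A_t$, so $\sigma_t(s)$ and $\sigma_{t+t'}(s)$ are defined and their comparison points in $\IE^2$ lie at Euclidean distance $s$ from $\bar x'$ on the two sides emanating from $\bar x'$. The $\CAT(0)$-inequality gives
\[
  d_X\bigl(\sigma_t(s),\sigma_{t+t'}(s)\bigr) \; \le \;
  d_{\IE^2}\bigl(\overline{\sigma_t(s)}, \overline{\sigma_{t+t'}(s)}\bigr) \; = \;
  s \sqrt{2(1-\cos\bar\theta)},
\]
so everything reduces to bounding $1-\cos\bar\theta$ from above.

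The Euclidean law of cosines together with $B:=d_X\bigl(c(t),c(t+t')\bigr)=t'$ yields
\[
  1-\cos\bar\theta \; = \; \frac{B^2-(A_{t+t'}-A_t)^2}{2\,A_t\,A_{t+t'}}.
\]
Factoring the numerator as a difference of squares and using the bounds above, it is at most $4a(t'+a)$; provided $T \ge 2a$, the denominator is bounded below by $(t-a)(t+t'-a) \ge (t/2)(t/2+t')$. A short elementary manipulation then yields $1-\cos\bar\theta \le C\,a/t$ for some absolute constant $C$, \emph{uniformly in $t' \ge 0$}. Setting $T := \max\bigl\{2a,\, s+a,\, 2Cas^2/\e^2\bigr\}$ finishes the proof. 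The main obstacle is precisely this uniformity in $t'$: a naive estimate would degenerate as $t' \to \infty$, but the cancellation between a numerator and denominator both growing linearly in $t'$ saves the argument, and this is exactly what the lemma demands, since $T$ is required to be independent of $t'$.
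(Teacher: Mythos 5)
Your proof is correct and takes essentially the same approach as the paper's: the reduction to the case $t+t' \le c_+$ (replacing $t+t'$ by $\min(t+t',c_+)$, which leaves $\sigma_{t+t'}$ unchanged, after disposing of the constant and $t\ge c_+$ cases) is the same reduction the paper makes, and the core estimate is the standard $\CAT(0)$ comparison-triangle bound. The only difference is that the paper delegates that bound, including its crucial uniformity in $t'$, to Bridson--Haefliger~II.8.3 and merely notes that the argument there adapts to $d_X(x,x')\le a$, whereas you carry out the Euclidean law-of-cosines computation explicitly, which makes the argument self-contained.
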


\begin{proof}
  In~\cite[II.8.3 on p.261]{Bridson-Haefliger(1999)} this is
  proven under the additional assumptions that $c$ is
  a geodesic ray and that $d_X(x,x') = a$.
  But the proof given in~\cite{Bridson-Haefliger(1999)}
  can be adapted as follows to give our more general result.

  The argument given in~\cite{Bridson-Haefliger(1999)}
  can be applied without change to show that there is 
  $T$ such that $d_X(\sigma_t(s),\sigma_{t+t'}(s)) < \e$
  for all $t \geq T$, $t' \geq 0$
  provided that $t+t' \leq c_+$.
  (This is needed to deduce that
  $|t-a| \leq d_X(x',c(t))$ and that
  $|t+t'-a| \leq d_X(x',c(t+t')$.)
  
  It remains to treat the case where $t \geq T$, 
  $t' \geq 0$ and $t + t' \geq c_{+}$.
  If $t \geq c_{+}$, then $\sigma_t = \sigma_{t+t'}$
  (because $c(t+t') = c(t) = c(c_+)$)
  and there is nothing to show.
  Thus we can assume $t \leq c_{+}$. 
  Set $t'' := c_{+} - t$.
  Then $t+t'' \leq c_{+}$,
  $t'' \geq 0$ and $\sigma_{t+t'} = \sigma_{t+t''}$ 
  (because $c(t + t') = c(t + t'') = c(c_{+})$).
  Thus $d_X(\sigma_t(s), \sigma_{t+t'}(s)) = 
          d_X(\sigma_t(s),\sigma_{t+t''}(s)) < \e$.      
\end{proof}

\begin{proof}
  [Proof of Lemma~\ref{lem:c-plus-minus-infinty-cont}]
  By an obvious symmetry 
  it suffices to consider the second map. 
  Let $c \in \FS(X) - \FS(X)^\IR$.
  Set $s_0 := \max\{0,c_{-}\} + 1$ and $x_0 := c(s_0)$.
  Then
  \begin{equation*}
    \widetilde{c} \colon \IR \to X, \quad t \mapsto \begin{cases}
  c(t + s_0) & t \ge 0;\\ c(s_0) & t \le 0,
  \end{cases}
  \end{equation*}
  is a generalized geodesic ray starting at $x_0$.
  We need to show that the map $f_r \colon \FS(X) \to \overline{B}_r(x_0)$
  defined by $f_r(d) := \rho_r(d(\infty))$ is for all $r > 0$
  continuous at $c$, see Remark~\ref{rem:cone-topology}.
  Note that $f_r(c) = \widetilde{c}(r) = c(s_0 + r)$.

  Let $\e > 0$ be given.
  By Lemma~\ref{lem:c-plus-minus-cont} there is $\delta_0$
  such that $d_{-} < s_0$ for all $d$ with $d_\FS(c,d) < \delta_0$.
  In particular, we obtain for any such generalized geodesic ray
  $d$ a generalized geodesic ray $\widetilde{d}$
  by putting $\widetilde{d}(t) = d(t+s_0)$ for $t \ge 0$ and 
  $\widetilde{d}(t) = d(s_0)$ for $t \le 0$.
  
  For $t > 0$ and $d \in \FS$ with $d_\FS(c,d) < \delta_0$ denote by 
  $\sigma^d_t \colon [0,d_X(x_0,d(s_0 + t))] \to X$ 
  the geodesic from $x_0$ to $d(s_0 + t)$.
  By Lemma~\ref{lem:sigma_t-converge} there is a number $T$ 
  (not depending on $d$!) such that
  $d_X(\sigma^d_t(r),\sigma^d_{t+t'}(r)) < \e$ for all $t' > 0$, $t \geq T$,
  provided that $d_X(d(s_0),x_0) \leq 1$.
  We extend $\sigma^d_t$ to a generalized geodesic ray by setting 
  $\sigma^d_t(s) := d(s_0 + t)$ for $s > d_X(x_0,d(s_0 + t))$ 
  and  $\sigma_t^d(s) := x_0$ for $s < 0$.
  The unique generalized geodesic ray $c_{d(\infty)}$ from $x_0$
  to $d(\infty)$ can be constructed as the  limit
  of the $\sigma^d_t$, 
  see~\cite[Proof of (8.2) on p.262]{Bridson-Haefliger(1999)}.
  It follows that $\sigma^d_t(r) \to c_{d(\infty)}(r)$ as $t \to \infty$.
  By definition of $\rho_r$ we have $c_{d(\infty)}(r) = f_r(d)$.
  Therefore $d_X(\sigma_T(r),f_r(d)) \leq \e$,
  provided that $d_X(d(s_0),c(s_0)) \leq 1$.

  By Lemma~\ref{lem:comparing_d_FS-d_X}~\ref{lem:comparing_d_FS-d_X:small} 
  there exists $0 < \delta_1 < \delta_0$ such that 
  \[
    d_X( c(s_0), d(s_0) ) < 1 \quad \mbox{and} \quad 
    d_X(c(s_0+T),d(s_0+T)) < \e
  \] 
  if $d_\FS(c,d) < \delta_1$.
  Consider the triangle whose
  vertices are $x_0 = c(s_0)$, $c(s_0+T)$ and $d(s_0+T)$.
  Recall that $\sigma_T^d$ is side of this triangle
  that connects $x_0$ to $d(s_0+T)$.
  Using the $\CAT(0)$-condition in this triangle
  it can be deduced that if $d_\FS(c,d) < \delta_1$
  then
  \begin{equation*}
     d_X( c(s_0 + r), \sigma^d_T(r) ) < 2\e.
  \end{equation*}
  
  Therefore for $d_\FS(c,d) < \delta_1$ we conclude
  \begin{eqnarray*}
    d_X( f_r(c), f_r(d) ) & = & d_X( c(r+s_0) , c_{d(\infty}(r) )
    \\
    & \leq & d_X( c(r + s_0), \sigma^d_T(r) ) 
       + d_X ( \sigma^d_T(r), c_{d(\infty)}(r) )
    \\
    & < & 3 \e. 
  \end{eqnarray*}
  Because $\e$ was arbitrary this implies that 
  $f_r$ is continuous at $c$.
\end{proof}


\subsection{Embeddings of the flow space}
\label{subsec:flow-space-proper}

\begin{proposition}
  \label{prop:embedding-of-FS}
  If $X$ is proper as a metric space, then
  the map 
  \[
  E \colon \FS(X) - \FS(X)^\IR \to \overline{R} \x \overline{X} \x X \x
                         \overline{X} \x \overline{R}
  \]
  defined by 
  $E(c) := (c_{-},c(-\infty),c(0),c(\infty),c_+)$
  is injective and continuous. It is a homeomorphism onto its image.
\end{proposition}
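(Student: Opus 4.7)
The plan is to verify the three claims---continuity, injectivity, and the homeomorphism property---in turn. Continuity of $E$ is immediate coordinate-wise: $c \mapsto c_\pm$ is continuous by Lemma~\ref{lem:c-plus-minus-cont}, $c \mapsto c(\pm\infty)$ by Lemma~\ref{lem:c-plus-minus-infinty-cont}, and $c \mapsto c(0)$ by Lemma~\ref{lem:comparing_d_FS-d_X}~\ref{lem:comparing_d_FS-d_X:small}.

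For injectivity, suppose $E(c) = E(d)$ for $c, d \in \FS(X) - \FS(X)^\IR$. The equality $c_\pm = d_\pm$ means the two generalized geodesics share the same interval $(c_-,c_+)$ on which they are isometric embeddings, and outside its closure they are locally constant with common value $c(-\infty) \in X$ (when $c_- > -\infty$) or $c(\infty) \in X$ (when $c_+ < \infty$). I would reconstruct the restrictions to $(c_-,c_+)\cap \IR$ in three cases: when both $c_\pm$ are finite the endpoints $c(\pm\infty)$ lie in $X$ and the restriction is the unique $\CAT(0)$ geodesic joining them; when exactly one of $c_\pm$ is infinite one endpoint is in $X$ and the other in $\dd X$, and uniqueness of geodesic rays joining such a pair (\cite[II.8.2]{Bridson-Haefliger(1999)}) identifies the restriction; when $c_- = -\infty$ and $c_+ = \infty$ the two unique rays from $c(0) \in X$ to $c(\pm\infty) \in \overline{X}$ pin $c$ down. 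In every case $c = d$.

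For the homeomorphism claim, a subsequence argument reduces matters to showing: if $E(c_n) \to E(c)$ for a sequence $(c_n)$ in $\FS(X) - \FS(X)^\IR$, then some subsequence of $c_n$ converges to $c$ in $\FS(X)$. The $c_n$ are all $1$-Lipschitz, and $c_n(0) \to c(0)$ lies in a compact subset of the proper space $X$; Arzel\`a-Ascoli applied to each compact interval followed by a diagonal argument produces a subsequence $c_{n_k}$ converging uniformly on compact sets to some $f \colon \IR \to X$. Then $f \in \FS(X)$ by Lemma~\ref{lem:FS-closed}, and $c_{n_k} \to f$ in $d_{\FS(X)}$ by Proposition~\ref{prop:uniform-convergence-on-compact}.

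The step I expect to require the most care is showing that $f$ is not a constant geodesic, so that $E(f)$ is defined and injectivity applies. For this I pick $s < t$ in the non-empty open interval $(c_-,c_+)$; since $(c_n)_\pm \to c_\pm$ in $\overline{\IR}$, eventually $s,t \in ((c_n)_-,(c_n)_+)$, giving $d_X(c_n(s),c_n(t)) = t-s$ and therefore $d_X(f(s),f(t)) = t-s > 0$, which forces $f \notin \FS(X)^\IR$. Continuity of $E$ then gives $E(f) = \lim_k E(c_{n_k}) = E(c)$, and injectivity forces $f = c$, finishing the proof.
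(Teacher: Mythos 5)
Your proposal is correct and follows the same overall architecture as the paper's proof: continuity coordinate-wise from Lemmas~\ref{lem:comparing_d_FS-d_X}, \ref{lem:c-plus-minus-cont}, \ref{lem:c-plus-minus-infinty-cont}; injectivity from uniqueness of (generalized) geodesic rays in a $\CAT(0)$-space; and, for the homeomorphism, extracting a subsequential limit $f \in \FS(X)$ via properness of evaluation (or equivalently Arzel\`a--Ascoli plus Lemma~\ref{lem:FS-closed}) and then verifying $f \notin \FS(X)^\IR$ so that continuity and injectivity force $f = c$. One genuine and welcome difference: the paper establishes $f \notin \FS(X)^\IR$ by a case split on whether $c_-$ is finite, using in the infinite case an ad hoc estimate involving $c_n(-2K)$ and $c_n(0)$, whereas you simply fix $s < t$ in the nonempty open interval $(c_-,c_+) \subset \IR$, note that $(c_n)_- < s < t < (c_n)_+$ eventually because $(c_n)_\pm \to c_\pm$ in $\overline{\IR}$, and pass to the limit to get $d_X(f(s),f(t)) = t - s > 0$. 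This is shorter, avoids the case distinction, and is the one place where your argument is a real improvement over the paper's.
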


\begin{proof} 
  Lemmas~\ref{lem:comparing_d_FS-d_X}~\ref{lem:comparing_d_FS-d_X:small},~%
\ref{lem:c-plus-minus-cont} and~\ref{lem:c-plus-minus-infinty-cont}
  imply that $E$ is continuous.

  Next we show that $E$ is injective.
  Let $c \in \FS(X) - \FS(X)^\IR$.
  If $t_0 \in [c_{-},c_{+}]$, then $t \mapsto c(t+t_0)$, $t \geq 0$ is
  the unique~\cite[II.8.2 on p.261]{Bridson-Haefliger(1999)} 
  (generalized) geodesic ray from $c(t_0)$ to $c(\infty)$,
  and similarly $t \mapsto c(t_0-t)$, $t \geq 0$ is the unique
  (generalized) geodesic ray from $c(t_0)$ to $c(-\infty)$.
  Let $c$, $d \in \FS(X) - \FS(X)^\IR$, with 
  $c(\pm \infty) = d(\pm \infty)$, $c_\pm = d_\pm$.
  Then $c$ and $d$ will agree if and only if $c(t) = d(t)$
  for some $t \in [c_{-},c_{+}]$, $t \neq \pm \infty$.
  If $E(c) = E(d)$, then there is such a $t$: if one of $c_{-}$
  and $c_{+}$ is real (not $\pm \infty$), then it can be used,
  otherwise $t = 0$ works.

  It remains to show that the inverse $E^{-1}$ to $E$ defined on the 
  image of $E$ is continuous.
  Let $(c_n)_{n \in \IN}$ be a sequence in $\FS(X)  - \FS(X)^\IR$
  and $c \in \FS(X)  - \FS(X)^\IR$ such that $E(c_n) \to E(c)$
  as $n \to \infty$.
  We have to show that $c_n \to c$ as $n \to \infty$.
  We proceed by contradiction and assume this fails.
  Then there is a subsequence $c_{n_k}$ and $\rho > 0$ such
  that $d_\FS( c, c_{n_k}) > \rho$ for all $k$.
  We can pass to this subsequence and assume $d_\FS(c, c_n) > \rho$ 
  for all $n$.
  We have $c_n(0) \to c(0)$ as $n \to \infty$.
  The evaluation at $t = 0$ is a proper map
  $\FS(X) \to X$ by Lemma~\ref{lem:evaluation-is-proper}.
  Thus we can pass to a further subsequence and assume that
  $c_n \to d$ as $n \to \infty$ with $d \in \FS(X)$.
  
  We claim that $d \not\in \FS(X)^\IR$.
  Because $c \not\in \FS(X)^\IR$ we have either
  $c(-\infty) \neq c(0)$ or $c(\infty) \neq c(0)$.
  By symmetry we may assume $c(-\infty) \neq c(0)$.
  We consider now two different cases.\\
  First case: $c_{-} \neq -\infty$.
  Then $c_{-} \in \IR$ and we can consider the evaluation at 
  $c_{-}$.
  We have $(c_n)_{-} \to c_{-}$,
  $c_n( -\infty) \to c(-\infty)$ and $c_n(0) \to c(0)$
  since $E(c_n) \to E(c)$ as $n \to \infty$.
  Moreover $c_n(c_{-}) \to d(c_{-})$ and $c_n(0) \to d(0)$ 
  since $c_n \to d$ as $n \to \infty$.
  Therefore $c(0) = d(0)$ and we get
  \begin{eqnarray*}
    d_X\bigl( d(c_{-}), c(c_{-}) \bigr) 
    & \leq & d_X\bigl( d(c_{-}), c_n(c_{-})\bigr) 
               + d_X\bigl(c_n(c_{-}), c_n( (c_n)_{-} ) \bigr)
           \\ & & + d_X\bigl(c_n( (c_n)_{-}), c( c_{-})\bigr)
    \\
    & \leq & d_X\bigl( d(c_{-}), c_n(c_{-}) \bigr)
               + \bigl| c_{-} - (c_n)_{-} \bigr|
               + d_X\bigl( c_n( -\infty ), c(-\infty)\bigr)
    \\
    & \to & 0 \; \text{as $n \to \infty$}.
  \end{eqnarray*}
  Thus $d(c_{-}) = c(c_{-}) = c(-\infty) \neq c(0) = d(0)$.
  Therefore $d \not\in \FS(X)^\IR$.\\
  Second case: $c_{-} = -\infty$.
  Because $(c_n)_{+} \to c_{+} \neq -\infty$, there is $K > 0$ such that
  $-K < (c_n)_{+}$ for all $n$.
  Since $(c_n)_{-} \to c_{-} = -\infty$ 
  we have $(c_n)_{-} < -2K$ for all sufficiently large $n$. 
  Then 
  \begin{eqnarray*}
  d_X\bigl(c_n(-2K),c_n(0)\bigr) 
  & = & 
  d_X\bigl( c_n(-2K), c_n(-K) \bigr) + 
  d_X\bigl( c_n(-K), c_n(0)\bigr)
  \\
  & \geq &
  d_X\bigl( c_n(-2K), c_n(-K) \bigr) 
  \\
  & = & K
\end{eqnarray*}
  for sufficiently large $n$.
  Using Lemma~\ref{lem:evaluation-is-proper} we conclude 
  $d_X\bigl(d(-2K),d(0)\bigr) \geq K$.
  Therefore $d \not\in \FS(X)^\IR$.
  This finishes the proof of the claim.
  
  Because $d \in \FS(X) - \FS(X)^\IR$ we can apply $E$ to $d$ and deduce
  $E(d) = E(c)$ from continuity of $E$. 
  Thus $c = d$ because $E$ is injective.
  This contradicts  $d_\FS(c, c_n) > \rho$ 
  for all $n$ and finishes the proof.
\end{proof}

Recall that $\FS(X)_f$ is the subspace of 
finite geodesics, see Notation~\ref{not:FS-0}.

\begin{proposition}
  \label{prop:local-embeding-of-FS_f}
  Assume that $X$ is proper as a metric space.
  Then the map
  \[
  E_f \colon \FS(X)_f - \FS(X)^\IR \to \IR \x X \x X  
  \]
  defined by
  $E_f(c) = \bigl(c_{-},c(-\infty),c(\infty)\bigr)$
  is a homeomorphism onto its image
  \[
  \im E_f = \{ (r,x,y) \mid x \neq y \}.
  \]
  In particular,
  $\FS(X)_f - \FS(X)^\IR$ is locally path connected.
\end{proposition}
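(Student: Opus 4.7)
The plan is to exhibit an explicit inverse $F$ of $E_f$ and to verify continuity of both maps. First, $E_f$ is well-defined: for $c \in \FS(X)_f - \FS(X)^\IR$ both $c_{\pm}$ are real, hence $c(\pm\infty) = c(c_\pm)$ lies in $X$. Continuity of $E_f$ follows by combining Lemma~\ref{lem:c-plus-minus-cont} (for the first coordinate) with Lemma~\ref{lem:c-plus-minus-infinty-cont} (for the second and third coordinates, whose image lies in $X$ with the subspace topology inherited from $\overline{X}$). Since $c$ restricts to an isometry on $(c_{-}, c_{+})$ with $c_{-} < c_{+}$, we get $c(-\infty) \neq c(\infty)$, so $\im E_f \subseteq \{(r,x,y) \mid x \neq y\}$.

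For the inverse, given $(r,x,y)$ with $x \neq y$, I use the fact that in a complete $\CAT(0)$-space there is a unique geodesic $\gamma_{x,y} \colon [0, d_X(x,y)] \to X$ from $x$ to $y$. Define
\begin{equation*}
F(r,x,y)(t) := \begin{cases} x & \text{if } t \leq r, \\ \gamma_{x,y}(t-r) & \text{if } r \leq t \leq r + d_X(x,y), \\ y & \text{if } t \geq r + d_X(x,y). \end{cases}
\end{equation*}
Then $F(r,x,y) \in \FS(X)_f - \FS(X)^\IR$ with $E_f(F(r,x,y)) = (r,x,y)$. Conversely, any $c \in \FS(X)_f - \FS(X)^\IR$ with $E_f(c) = (r,x,y)$ must coincide with $F(r,x,y)$: on $[c_-, c_+] = [r, r + d_X(x,y)]$ the restriction of $c$ agrees with $\gamma_{x,y}$ by uniqueness of geodesics, and outside this interval $c$ is constant, forced by continuity to equal $x$ on the left and $y$ on the right. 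Hence $F$ is a two-sided inverse of $E_f$ and $\im E_f = \{(r,x,y) \mid x \neq y\}$.

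The main step is continuity of $F$. By Proposition~\ref{prop:uniform-convergence-on-compact} it suffices to show that if $(r_n, x_n, y_n) \to (r, x, y)$ in $\im E_f$, then $F(r_n, x_n, y_n) \to F(r, x, y)$ uniformly on each compact interval $[-T, T]$. This reduces to continuous dependence of $\gamma_{x_n, y_n}$ on the pair of endpoints, a standard consequence of the $\CAT(0)$-inequality applied to suitable comparison triangles. Combined with $r_n \to r$ and $d_X(x_n, y_n) \to d_X(x,y)$, uniform convergence on $[-T, T]$ follows; I expect this continuous dependence of geodesics on endpoints to be the main, though routine, technical ingredient.

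Finally, for local path connectedness: $\im E_f = \{(r,x,y) \mid x \neq y\}$ is open in $\IR \x X \x X$. Small open balls in the $\CAT(0)$-space $X$ are geodesically convex and hence path connected, so $X$, and therefore $\IR \x X \x X$, is locally path connected. Any open subspace inherits this property, and the homeomorphism $E_f$ transfers it to $\FS(X)_f - \FS(X)^\IR$.
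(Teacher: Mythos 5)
Your proposal is correct, but it follows a genuinely different route from the paper's proof. The paper establishes that $E_f$ (corestricted to its image) is a \emph{proper} map: given a compact $K\subseteq \{(r,x,y)\mid x\neq y\}$ and a sequence $(c_n)$ in $E_f^{-1}(K)$, it uses Lemma~\ref{lem:evaluation-is-proper} (properness of evaluation at a point) to extract a subsequential limit $c\in\FS(X)$, verifies $c\notin\FS(X)^\IR$, and then invokes the topological fact~\cite[2.2, 2.7]{Steenrod(1967)} that an injective continuous proper map of metric spaces is a homeomorphism onto its image. You instead write down the explicit two-sided inverse $F$ and argue its continuity directly. Both strategies work. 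Your route is more hands-on and, as a side effect, does not actually use properness of $X$: the continuity of $E_f$ comes from Lemmas~\ref{lem:c-plus-minus-cont} and~\ref{lem:c-plus-minus-infinty-cont}, which need only completeness, and continuity of $F$ rests on continuous dependence of geodesics on their endpoints in a complete $\CAT(0)$-space (\cite[II.1.4(1)]{Bridson-Haefliger(1999)}), also without properness. The paper's route, by contrast, leans on properness of $X$ (via Lemma~\ref{lem:evaluation-is-proper}), but as a bonus records that $E_f$ is a proper map.

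One mild criticism: your continuity argument for $F$ is left at the level of ``I expect this continuous dependence \dots to be the main, though routine, technical ingredient.'' It would be worth closing this. A clean way, consistent with what the paper itself does in Section~\ref{sec:Contracting-transfers-FS}, is to observe that $F(r,x,y) = \Phi_{-r}(c_{x,y})$, where $c_{x,y}$ is the generalized geodesic with $(c_{x,y})_- = 0$, $c_{x,y}(-\infty)=x$, $c_{x,y}(\infty)=y$; the map $(x,y)\mapsto c_{x,y}$ is continuous by \cite[II.1.4(1)]{Bridson-Haefliger(1999)} together with Proposition~\ref{prop:uniform-convergence-on-compact}, and the flow $\Phi$ is continuous by Lemma~\ref{lem:Phi_well-defined}, so $F$ is a composite of continuous maps. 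With that detail filled in, your proof is complete.
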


\begin{proof}
  Lemmas~\ref{lem:c-plus-minus-cont} and~\ref{lem:c-plus-minus-infinty-cont}
  imply that $E_f$ is continuous.  
  The map $E_f$ is injective with the stated image because of existence 
  and uniqueness of geodesics between points in $X$,
  see~\cite[II.1.4 on p.160]{Bridson-Haefliger(1999)},
 
  Next we show that the induced map
  $$E_f \colon \FS(X)_f - \FS(X)^{\IR} \to  \{(r,x,y) \mid x \neq y \}$$
  is proper. Let $K \subset \{(r,x,y) \mid x \neq y \}$ 
  be compact.  We will show that ${E_f}^{-1}(K)$ is
  sequentially compact.  Let $(c_n)_{n \in \IN}$ be a sequence in
  ${E_f}^{-1}(K)$.  After passing to a subsequence, we may assume that
  $E_f(c_n)$ converges in K.  Thus $(c_n)_{-} \to t_0 \in \IR$, $c_n(-\infty)
  \to x_{-} \in X$, $c_n(\infty) \to x_{+} \in X$, and $x_{-} \neq x_{+}$.  
  We have 
  \begin{eqnarray*}
  d_X(c_n(t_0),x_{-}) 
  & \leq &
  d_X(c_n(t_0),c_n(-\infty)) + d_X(c_n(-\infty),x_{-}) 
  \\
  & \leq &
  |t_0 - (c_n)_{-}| +   d_X(c_n(-\infty),x_{-}).
  \end{eqnarray*}
  Thus $c_n(t_0) \to x_{-}$ as $n \to \infty$.  Using
  Lemma~\ref{lem:evaluation-is-proper} we deduce that $(c_n)_{n \in \IN}$ 
  has a convergent subsequence in
  $\FS(X)$, that we will again denote by $c_n$.  
  So now $c_n \to c$ in $\FS(X)$
  for some $c \in \FS(X)$.

  We will show next that $c \not\in \FS^{\IR}$.  We have $(c_n)_{+} - (c_n)_{-}
  = d_X(c_n(\infty),c_n(-\infty)) \to d_X(x_{-},x_{+})$, as $n \to \infty$.
  Thus $(c_n)_{+} \to t_1 := t_0 + d_X(x_{-},x_{+})$.  We have
  $$
  d_X(c_n(t_1),x_{+}) \leq d_X(c_n(t_1),c_n(\infty)) + d_X(c_n(\infty),x_{+})
  \leq |t_1 - (c_n)_{+}| + d_X(c_n(\infty),x_{+}).
  $$
  Thus $c_n(t_1) \to x_{+}$
  as $n \to \infty$.  From Lemma~\ref{lem:evaluation-is-proper} we conclude
  $$
  c(t_1) = \lim c_n(t_1) = x_{+} \neq  x_{-} = c(t_0).
  $$
  Thus $c(t_1) \neq c(t_0)$ and $c \in \FS - \FS^{\IR}$.

  Now $E_f(c_n) \to E_f(c)$ as $E_f$ is continuous.  Therefore $c_{-} = t_0$,
  $c(-\infty) = x_{-}$, $c(\infty) = x_{+}$.  Thus $c \in {E_f}^{-1}(K)$.
  Hence $E_f \colon \FS(X)_f - \FS(X)^{\IR} \to  \{(r,x,y) \mid x \neq y \}$
  is an injective continuous  proper maps of metric spaces. This implies
  that it is a homeomorphism (see~\cite[2.2 and~2.7]{Steenrod(1967)}).

  By the existence of geodesics the image of $E_f$ is locally path
  connected. Hence $\FS(X)_f - \FS(X)^\IR$ is locally path connected.
\end{proof}


\subsection{Covering dimension of the flow space}
\label{sec:dimension-of-flow-space}

We will need the following elementary fact.

\begin{lemma}
  \label{lem:dim-overline-X}
  If $X$ is proper as a metric space and its covering dimension
  $\dim X$ is $\leq N$, then $\dim \overline{X} \leq N$.
\end{lemma}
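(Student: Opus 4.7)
The plan is to use the retractions $\rho_r \colon \overline{X} \to \overline{B}_r(x_0)$ from Remark~\ref{rem:cone-topology} to reduce a dimension estimate for $\overline{X}$ to one for the compact subspaces $\overline{B}_r(x_0) \subseteq X$. I would record three preliminary facts. First, since $X$ is proper each $\overline{B}_r(x_0)$ is compact, so by subspace monotonicity of the covering dimension for metric spaces $\dim \overline{B}_r(x_0) \le \dim X \le N$. Second, because $X$ is proper the bordification $\overline{X}$ is compact (a standard fact, see~\cite[Chapter~II.8]{Bridson-Haefliger(1999)}). Third, from the definition of $\rho_r$ and the uniqueness of the generalized geodesic ray from $x_0$, one verifies the factorization $\rho_{r'} = \rho_{r',r} \circ \rho_r$ for all $0 < r' \le r$, where $\rho_{r',r} \colon \overline{B}_r(x_0) \to \overline{B}_{r'}(x_0)$ is the analogous retraction; in particular $\rho_r$ is a continuous surjection onto $\overline{B}_r(x_0)$, continuity being immediate from the description of the basis of the cone topology in Remark~\ref{rem:cone-topology}.

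Given an open cover $\calu$ of $\overline{X}$, I would produce a refinement of dimension at most $N$ as follows. Using compactness of $\overline{X}$, extract a finite subcover, and then refine its members via the basis of the cone topology to a finite open cover by basic sets $\rho_{r_i}^{-1}(V_i)$, each still contained in some member of $\calu$. Setting $r := \max_i r_i$ and using $\rho_{r_i}^{-1}(V_i) = \rho_r^{-1}\bigl(\rho_{r_i,r}^{-1}(V_i)\bigr)$, I rewrite this as a finite cover $\{\rho_r^{-1}(V'_i)\}$ of $\overline{X}$ with each $V'_i$ open in $\overline{B}_r(x_0)$. Since $\rho_r$ is surjective onto $\overline{B}_r(x_0)$, the sets $V'_i$ cover $\overline{B}_r(x_0)$.

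Now I apply $\dim \overline{B}_r(x_0) \le N$ to refine $\{V'_i\}$ to an open cover $\{W_j\}$ of $\overline{B}_r(x_0)$ with $\dim \{W_j\} \le N$, and pull back to obtain the open cover $\{\rho_r^{-1}(W_j)\}$ of $\overline{X}$, which still refines $\calu$. Its dimension is at most $N$: if $\rho_r^{-1}(W_{j_0}) \cap \dots \cap \rho_r^{-1}(W_{j_{N+1}})$ contained a point $x$, then $\rho_r(x)$ would lie in $W_{j_0} \cap \dots \cap W_{j_{N+1}}$, which is empty. The only point that needs a little care is the factorization $\rho_{r'} = \rho_{r',r} \circ \rho_r$, which is what allows the choice of a single parameter $r$ and follows directly from the definition of the generalized geodesic ray from $x_0$.
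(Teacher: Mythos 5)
Your proposal is correct and follows essentially the same route as the paper: reduce to a finite subcover by compactness of $\overline{X}$, convert it via the factorization $\rho_{r'} = \rho_{r',r} \circ \rho_r$ into a cover pulled back through a single $\rho_r$, refine the resulting open cover of $\overline{B}_r(x_0)$ using $\dim \overline{B}_r(x_0) \le \dim X \le N$, and pull back. The paper's proof is a more compressed version of the same argument (it states the factorization as $\rho_r = \rho_r|_{\overline{B}_{r'}(x_0)} \circ \rho_{r'}$ for $r' > r$, which is just your identity with the roles of $r$ and $r'$ interchanged).
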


\begin{proof}
  Let $\calu = \{U_i \mid i \in I\}$ be an open covering of $\overline{X}$.
  Recall from Remark~\ref{rem:cone-topology} that a basis 
  for the topology on $\overline{X}$ is given by sets of the form
  $\rho_r^{-1}(W)$ for $r \ge 0$ and open $W \subseteq \overline{B}_r(x_0)$,
  where we fix a base point $x_0$.)
  Thus for every $x \in \overline{X}$ there are $r_x$, 
  $W_x \subseteq \overline{B}_{r_x}(x_0)$ and $U_x \in \calu$ such that 
  $x \in \rho_{r_x}^{-1}(W_x) \subset U_x$.
  Because $\overline{X}$ is compact a finite number of the 
  sets $\rho_{r_x}^{-1}(W_x)$ cover $\overline{X}$.
  Note that $\rho_r = \rho_r|_{\overline{B}_{r'}(x_0)} \circ \rho_{r'}$,
  if $r' > r$.
  Therefore we can refine $\calu$ to a finite cover $\calv$,
  such that there is $r$ and a finite cover $\calw$ of $\overline{B_r(x_0)}$
  such that
  \begin{equation*}
    \calv = \rho_r^{-1}(\calw) := \{ \rho_r^{-1}(W) \mid W \in \calw \}.
  \end{equation*}
  The result follows because $\overline{B_r(x_0)}$ is closed in $X$
  and thus $\dim \overline{B_r(x_0)} \leq \dim X$.
\end{proof}

\begin{proposition}
  \label{pro:dim-for_FS-FSR}
  Assume that $X$ is proper and that $\dim X \leq N$.
  Then 
  $$\dim\bigl(\FS(X) - \FS(X)^\IR\bigr) \leq 3N+2.$$
\end{proposition}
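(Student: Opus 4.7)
The plan is to leverage Proposition~\ref{prop:embedding-of-FS}, which realises $\FS(X)-\FS(X)^\IR$ as a subspace of the product $P := \overline{\IR}\times\overline{X}\times X\times\overline{X}\times\overline{\IR}$ via the embedding $E$. Since topological dimension is monotone under passage to subspaces in the metric (or more generally, metrisable) setting, it suffices to establish $\dim P\le 3N+2$.

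For the factors: $\overline{\IR}$ is homeomorphic to a closed interval and thus has dimension $1$. By hypothesis $\dim X\le N$. To handle $\overline{X}$, I would invoke Lemma~\ref{lem:dim-overline-X}, which gives $\dim\overline{X}\le N$ whenever $X$ is proper with $\dim X\le N$. Note that $\overline{X}$ is metrisable (this is the reason for working with a proper $X$: the cone topology on the bordification of a proper $\CAT(0)$-space is compact metrisable), and so is the entire product $P$.

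The main technical input is then the classical product inequality for covering dimension of metrisable spaces,
\[
\dim(A\times B)\;\le\;\dim A+\dim B,
\]
applied inductively to the five factors. This gives
\[
\dim P\;\le\;\dim\overline{\IR}+\dim\overline{X}+\dim X+\dim\overline{X}+\dim\overline{\IR}\;\le\;1+N+N+N+1\;=\;3N+2.
\]
Since $E$ is a homeomorphism onto its image, the image inherits a metrisable topology from $P$, and the monotonicity of $\dim$ on subspaces of metrisable spaces yields $\dim\bigl(\FS(X)-\FS(X)^\IR\bigr)\le 3N+2$.

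The only real point requiring care is the availability of the product and monotonicity inequalities for covering dimension; these are standard for separable metric spaces (or, in the form needed here, for metrisable spaces with one compact factor such as $\overline{X}$ and $\overline{\IR}$), so the argument reduces to assembling these ingredients with the embedding $E$ and Lemma~\ref{lem:dim-overline-X}. No further case distinction or additional geometry of $\CAT(0)$-spaces is required beyond what is already encoded in Proposition~\ref{prop:embedding-of-FS}.
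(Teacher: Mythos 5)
Your argument is correct and is close in spirit to the paper's proof (both rest on the embedding $E$ of Proposition~\ref{prop:embedding-of-FS}, on Lemma~\ref{lem:dim-overline-X}, and on a product bound for covering dimension), but it takes a genuinely more direct route in the final step. The paper does \emph{not} apply a subspace/product theorem to $\FS(X)-\FS(X)^\IR$ directly; instead it first restricts to an arbitrary compact subset $K\subseteq \FS(X)-\FS(X)^\IR$, notes that $E|_K$ is automatically a homeomorphism onto its image because $K$ is compact and the target is Hausdorff (so only injectivity and continuity of $E$ are needed there, not the full ``homeomorphism onto image'' conclusion), bounds $\dim K\le 3N+2$, and then passes to the whole space by observing that $\FS(X)-\FS(X)^\IR$ is locally compact with a countable basis and invoking an exercise in Munkres. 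You instead use the full strength of Proposition~\ref{prop:embedding-of-FS} together with the subspace monotonicity and Kat\v{e}tov--Morita product inequalities for metrizable spaces, applied to $P=\overline{\IR}\times\overline{X}\times X\times\overline{X}\times\overline{\IR}$. Your version is cleaner and avoids the reduction to compacta and the Munkres reference, at the cost of invoking the metrizable-space versions of the product and subset theorems (which are standard but deeper than their compact-metric counterparts); it also needs the observation, which you correctly supply, that $\overline{X}$ is compact metrizable when $X$ is proper, so that $P$ is indeed metrizable and these theorems apply. Both arguments are valid; the paper's is more elementary in its dimension-theoretic inputs, yours is shorter.
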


\begin{proof}
  The image of any compact subset under a continuous map is compact and
  a bijective continuous map with a compact subset as source and Hausdorff
  space as target is a homeomorphism. 
  Hence every compact subset $K$ 
  of  $\FS(X) - \FS(X)^\IR$ is homeomorphic to a compact subset of
  $\overline{R} \x \overline{X} \x X \x \overline{X} \x \overline{R}$
  by Proposition~\ref{prop:embedding-of-FS} and
  hence its topological dimension satisfies because of
  Lemma~\ref{lem:dim-overline-X}
  Since $\FS(X)$ is a proper metric space by Lemma~\ref{prop:FS-is-proper},
  it is locally compact and can be written as 
  the countable union of compact subspaces
  and hence contains a countable dense subset. This implies
  that $\FS(X)$ has a countable basis for its topology.
  Since $\FS(X)- \FS(X)^{\IR}$ is an open subset of $\FS(X)$, 
  the topological space
  $\FS(X)- \FS(X)^{\IR}$ is locally compact and has a countable 
  basis for its topology.
  Now $\dim\bigl(\FS(X) - \FS(X)^\IR\bigr) \leq 3N+2$ follows 
  from~\cite[Exercise~9 in Chapter~7.9 on page~315]{Munkres(1975)}.
\end{proof}


\subsection{The flow space is locally connected}
\label{subsec:flow-space-locally-connected}

A topological space $Y$ is called \emph{semi-locally path-connected}
if for any $y \in Y$ and neighborhood $V$ of $y$ there is an open 
neighborhood $U$ of $y$ such that for every 
$z \in U$ there is a path $w$ in $V$ from $y$ to $z$.
Recall that $Y$ is called \emph{locally connected} or 
\emph{locally path-connected} if any neighborhood $V$ of any point 
$y \in Y$ contains an open neighborhood $U$ of $y$ such that $U$ itself is 
connected or path-connected respectively.
Suppose that $Y$ is semi-locally path-connected. 
Then any open subset of $Y$ is again semi-locally path-connected 
and each component of any open subset of $Y$ is an open subset
of $Y$. 
The latter is equivalent to the condition that $Y$ is locally connected.
Hence semi-locally path-connected implies locally connected.
The notion of semi-locally path-connected is 
weaker than the notion of locally
path-connected.

\begin{proposition}
  \label{prop:flow-space-is-locally-connected}
  Assume that $X$ is proper as a metric space.  
  Then $\FS(X) - \FS(X)^\IR$ is semi-locally path-connected.  
  In particular, $\FS(X) - \FS(X)^\IR$ is locally
  connected.
\end{proposition}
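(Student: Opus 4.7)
The strategy is to use the deformation $H$ from Lemma~\ref{lem:push-to-finite-geodesic} to push points near $c$ into $\FS(X)_f - \FS(X)^\IR$, where local path-connectedness is already available from Proposition~\ref{prop:local-embeding-of-FS_f}, and then to glue three paths together.

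Fix $c \in \FS(X) - \FS(X)^\IR$ and a neighborhood $V$ of $c$ in $\FS(X) - \FS(X)^\IR$. Since $\FS(X)^\IR$ is closed in $\FS(X)$ by Lemma~\ref{lem:FS-IR-is-closed}, we may shrink $V$ and assume that $V$ is open in $\FS(X)$ and disjoint from $\FS(X)^\IR$. Because $c \notin \FS(X)^\IR$, pick $t_0, t_1 \in \IR$ with $c(t_0) \neq c(t_1)$. Using continuity of $\tau \mapsto H_\tau(c)$ at $\tau = 0$, choose $\tau_0 > 0$ with $\tau_0 \leq e^{-\max(|t_0|,|t_1|)}$ and such that $H_\tau(c) \in V$ for all $\tau \in [0, \tau_0]$. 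The bound on $\tau_0$ forces $|t_i| \leq -\ln(\tau)$ for $\tau \in (0,\tau_0]$, so $H_\tau(c)(t_i) = c(t_i)$ and hence $H_\tau(c) \notin \FS(X)^\IR$. In particular $H_{\tau_0}(c) \in \FS(X)_f - \FS(X)^\IR$, and $s \mapsto H_{s\tau_0}(c)$ is a path from $c$ to $H_{\tau_0}(c)$ inside $V$.

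By Proposition~\ref{prop:local-embeding-of-FS_f} there is a path-connected open neighborhood $W$ of $H_{\tau_0}(c)$ in $\FS(X)_f - \FS(X)^\IR$ with $W \subseteq V$. The compact set $\{c\} \times [0, \tau_0]$ lies in the open set $H^{-1}(V) \subseteq \FS(X) \times [0,1]$, so the tube lemma provides an open neighborhood $U_0$ of $c$ in $\FS(X)$ with $H(U_0 \times [0, \tau_0]) \subseteq V$. Using continuity of $H_{\tau_0}$ at $c$, we may shrink $U_0$ further so that $H_{\tau_0}(U_0) \subseteq W$. Set $U := U_0 \cap V$. For any $d \in U$, concatenate (a) the path $s \mapsto H_{s\tau_0}(d)$ from $d$ to $H_{\tau_0}(d)$, which lies in $V$ by the tube lemma; (b) a path in $W \subseteq V$ from $H_{\tau_0}(d) \in W$ to $H_{\tau_0}(c)$, available by path-connectedness of $W$; and (c) the reverse of $s \mapsto H_{s\tau_0}(c)$ from $H_{\tau_0}(c)$ back to $c$ inside $V$. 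The resulting path from $d$ to $c$ stays in $V$, which proves semi-local path-connectedness. The delicate point is to keep all three pieces of the constructed path outside of $\FS(X)^\IR$; this is arranged by choosing $V$ disjoint from $\FS(X)^\IR$, after which the tube lemma propagates the condition to the entire homotopy image $H(U_0 \times [0,\tau_0])$, and the non-constancy of the intermediate geodesics is forced by the bound $\tau_0 \leq e^{-\max(|t_0|,|t_1|)}$.
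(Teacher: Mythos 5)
Your proposal is correct and follows essentially the same route as the paper: use the homotopy $H$ from Lemma~\ref{lem:push-to-finite-geodesic} to push into the finite-geodesic part, invoke the local path-connectedness of $\FS(X)_f - \FS(X)^\IR$ from Proposition~\ref{prop:local-embeding-of-FS_f}, use the closedness of $\FS(X)^\IR$ (Lemma~\ref{lem:FS-IR-is-closed}) to treat the given neighborhood as open in $\FS(X)$, find by continuity/the tube lemma a smaller neighborhood whose whole $[0,\tau_0]$-homotopy image stays inside $V$, and concatenate three paths. One remark: once you have arranged $V$ disjoint from $\FS(X)^\IR$ and $H(U_0\times[0,\tau_0])\subseteq V$, the extra bound $\tau_0 \le e^{-\max(|t_0|,|t_1|)}$ is redundant, since all intermediate geodesics are already forced to be non-constant by lying in $V$; the paper omits this step for exactly that reason. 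A second, minor point you gloss over is why $(H_{\tau_0})^{-1}(W)$ is open in $\FS(X)$: you need $\FS(X)_f - \FS(X)^\IR$ to be open in $\FS(X) - \FS(X)^\IR$ (equivalently, in $\FS(X)$), which the paper deduces from Proposition~\ref{prop:embedding-of-FS}; alternatively one can observe that $\FS(X)_f - \FS(X)^\IR$ is open in $\FS(X)_f$ because $\FS(X)^\IR$ is closed, and that $H_{\tau_0}$ lands in $\FS(X)_f$.
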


\begin{proof}
  Consider $c \in \FS(X) - \FS(X)^\IR$ and a neighborhood 
  $V \subseteq \FS(X) - \FS(X)^\IR$ of $c$. 
  By Lemma~\ref{lem:push-to-finite-geodesic} there
  is a homotopy $H_t \colon \FS(X) \to \FS(X)$ such
  that $H_0 = \id$ and $H_t(\FS(X)) \subseteq \FS(X)_f$ for all
  $t > 0$. 
  Lemma~\ref{lem:FS-IR-is-closed} implies that $V$ is also open 
  as a subset of $\FS(X)$.
  Since $H$ is continuous, there is $\delta > 0$ and an open 
  neighborhood $U_1 \subseteq V$ of $c$
  such that $H_t(U_1) \subseteq V$ for all $t \in [0,\delta]$.
  For any $d \in U_1$, $\omega_{d}(t) := H_{t\delta}(d)$ defines
  a path in $V$ from $d$ to $H_\delta(d)$.
  We have $H_\delta(c) \in \FS(X)_f$.
  From Proposition~\ref{prop:embedding-of-FS} we conclude that
  $\FS(X)_f - \FS(X)^\IR$ is open in
  $\FS(X) - \FS(X)^\IR$.
  By Proposition~\ref{prop:local-embeding-of-FS_f}
  we can find a path-connected neighborhood 
  $W \subseteq V \cap \left( \FS(X)_f - \FS(X)^\IR \right)$ 
  of $H_\delta(c)$.
  Set now $U := U_1 \cap (H_\delta)^{-1} (W)$.
  
  Consider $d \in U$.
  Then $H_\delta(c)$ and $H_\delta(d)$ both lie in $W$.
  Thus there is a path in $W$ from $H_\delta(c)$ to $H_\delta(d)$.
  This is in particular a path in $V$, since $W \subseteq V$. 
  Then $\omega := \omega_c \ast v \ast \omega_d$
  is a path in $V$ from $c$ to $d$.
  Hence $\FS(X) -  \FS(X)^{\IR}$ is semi-locally path connected.
\end{proof}


\subsection{The example of a complete Riemannian manifold 
      with non-positive sectional curvature}
  \label{subsec:example_non-pos-curv}

  Let $M$ be a simply connected complete Riemannian manifold 
  with non-positive sectional curvature.  
  It is a $\CAT(0)$-space with respect to the metric
  coming from the Riemannian metric (see~\cite[I.A.6 on
  page~173]{Bridson-Haefliger(1999)}).  
  Let $ST\!M$ be its sphere tangent bundle. 
  For every $x \in M$ and $v \in ST_xM$ there is precisely one geodesic
  $c_v \colon \IR \to M$ for which $c_v(0) = x$ and $c_v'(0) = v$ holds. 
  Given a geodesic $c \colon \IR \to M$ in $M$ and 
  $a_-,a_+ \in \overline{\IR}$ with
  $a_- \le a_+$, define the generalized geodesic 
  $c_{[a_-,a_+]} \colon \IR \to M$ 
  by sending $t$ to $c(a_-)$ if $t \le a_-$, to $c(t)$ if $a_- \le t \le
  a_+$, and to $c(a_+)$ if $t \ge a_+$. 
  Obviously $c_{[-\infty,\infty]} = c$.
  Let $d \colon \IR \to M$ be a generalized geodesic with $d_- < d_+$.  
  Then there is precisely one geodesic $\widehat{d} \colon \IR \to M$ with
  $\widehat{d}_{[d_-,d_+]} = d$.

  Define maps
  \begin{eqnarray*}
      \alpha \colon STM \times \bigl\{(a_i,a_+) \in \overline{\IR} \times 
      \overline{\IR}  \mid a_- < a_+\bigr\} \to \FS(M), 
      & \quad & 
      (v,a_i,a_+) \mapsto  c_v|_{[a_-,a_+]};
      \\
      \beta \colon \FS(M) \to STM \times \bigl\{(a_i,a_+) \in \overline{\IR}  \times 
      \overline{\IR}  \mid a_- < a_+\bigr\},
      & \quad &
      c \mapsto ({\widehat{c}\,}'(0),c_-,c_+).
    \end{eqnarray*}
  Then $\alpha$ and $\beta$ are to another inverse homeomorphisms. 
  They are compatible with the flow on $\FS(M)$ 
  of Definition~\ref{def_flow_space_FS(X)}, if one uses on 
  $STM \times  \bigl\{(a_i,a_+) \in \overline{\IR}  \times 
    \overline{\IR}  \mid a_- < a_+\bigr\}$ 
  the product flow given by the geodesic flow on $ST\!M$ and
  the flow on $\overline{R}$ which is at time $t$ given by the homeomorphism
  $\overline{\IR} \to \overline{\IR}$ sending $s \in \IR$ to $s-t$,
  $-\infty$ to $-\infty$, and $\infty$ to $\infty$.


\section{Dynamic properties of the flow space}
\label{sec:Contracting-transfers-FS}

\begin{summary*}
  In Definition~\ref{def:homotopy-action-on-B} introduce  
  the homotopy action that
  we will use to show that $\CAT(0)$-groups are transfer
  reducible over $\VCyc$. 
  It will act on a large ball in $X$.
  (The action of $G$ on the bordification $\overline{X}$
  is not suitable, because it has to large isotropy groups.)
  In Propositions~\ref{prop:flow-estimate-g}
  and~\ref{prop:flow-estimate-H} we study the dynamics
  of the flow with respect to the homotopy action.
  In the language of Section~\ref{sec:flow-spaces-and-S-long-covers} 
  this shows that $\FS(X)$ admits contracting transfers.
\end{summary*}

Throughout this section we fix the following convention.

\begin{convention}
  Let
  \begin{itemize}
  \item $(X,d_X)$ be a $\CAT(0)$-space which is proper
        as a metric space;
  \item $x_0 \in X$ be a fixed base point;
  \item $G$ be a group with a proper isometric action on $(X,d_X)$.  
  \end{itemize}
  For $x, y \in X$ and $t \in [0,1]$ we will denote
  by $t \cdot x + (1-t)\cdot y$ the unique point $z$ on the
  geodesic from $x$ to $y$ such that $d_X(x,z) = t d_X(x,y)$
  and $d_X(z,y) = (1-t) d_X(x,y)$.
  For $x,y \in X$ we will denote by $c_{x,y}$ the generalized
  geodesic determined by $(c_{x,y})_{-} = 0$, $c(-\infty) = x$
  and $c(\infty) = y$.
  (By~\cite[II.1.4(1)~on~p.160]{Bridson-Haefliger(1999)}
  and Lemma~\ref{prop:uniform-convergence-on-compact},
  $(x,y) \mapsto c_{x,y}$ defines a continuous map
  $X \x X \to \FS(X)$. 
  Note that $g \cdot c_{x,y} = c_{gx,gy}$.)
\end{convention}


\subsection{The homotopy action on $\overline{B}_R(x)$}
\label{subsec:homotopy-action-on-B}

Recall that for $r > 0$ and $z \in X$
we denote by $\rho_{r,z} \colon X \to \overline{B}_r(z)$
the canonical projection along geodesics, i.e.,
$\rho_{r,z}(x) = c_{z,x}(r)$, see also
Remark~\ref{rem:cone-topology}.
Note that $g \cdot \rho_{r,z}(x) = \rho_{r,gz}(gx)$
for $x,z \in X$ and $g \in G$.

\begin{definition}[The homotopy $S$-action on $\overline{B}_R(x_0)$]
  \label{def:homotopy-action-on-B}
  Let $S \subseteq G$ be a finite subset of $G$ with $e \in G$ and $R > 0$. 
  Define a homotopy $S$-action $(\varphi^R,H^R)$ on
  $\overline{B}_R(x)$ in the sense of 
  Definition~\ref{def:S-action_plus_long-covers}~%
\ref{def:S-action_plus_long-covers:action} as follows.  
  For $g \in S$, we define the map
  \[
    \varphi_g^R \colon \overline{B}_R(x_0) \to \overline{B}_R(x_0)
  \]
  by $\varphi^R_{g} (x) := \rho_{R,x_0} (gx)$.
  \begin{equation*}
    \begin{tikzpicture}
      \clip (-2,-1) rectangle (12,2);
      \fill [black,opacity=.5] (0,0) circle (2pt);
      \draw (-.5,0) node {$x_0$};
      \draw[densely dotted, thin] (0,0) circle (8.5);
      \fill [black,opacity=.5] (8,1.5) circle (2pt);
      \draw (7.7,1.5) node {$x$};
      \fill [black,opacity=.5] (10,.5) circle (2pt);
      \draw (10.5,.5) node {$gx$};
      \draw[thin, densely dashed] (0,0) -- (10,.5);
      \draw (9,1.8) node {$\overline{B_R(x_0)}$};
      \fill [black,opacity=.5] (8.49,.43) circle (2pt);
      \draw (7.9,-.07) node {$\varphi^R_g(x)$}; 
    \end{tikzpicture}
  \end{equation*}
  For $g,h \in S$ with $gh \in S$ we define the homotopy
  \[
    H^R_{g,h} \colon \varphi_g^R \circ \varphi_h^R \simeq \varphi_{gh}^R
  \] 
  by $H^R_{g,h}(x,t) := \rho_{R, x_0} 
         \bigl(t \cdot (ghx) + (1-t)\cdot (g \cdot \rho_{R,x_0}(hx))\bigr)$.
  \begin{equation*}
    \begin{tikzpicture}
      \useasboundingbox (2.3,-2.3) rectangle (9,2.3);
      \clip (-2,-2) rectangle (12,2);
      \fill [black,opacity=.5] (0,0) circle (2pt);
      \draw (-.5,0) node {$x_0$};
      \draw[densely dotted, thin] (0,0) circle (8.5);
      \fill [black,opacity=.5] (8,1.5) circle (2pt);
      \draw (7.7,1.5) node {$x$};
      \fill [black,opacity=.5] (10,0) circle (2pt);
      \draw (10.5,0) node {$hx$};
      \draw[thin, densely dashed] (0,0) -- (10,0);
      \draw (9,1.8) node {$\overline{B}_R(x_0)$};
      \fill [black,opacity=.5] (8.5,0) circle (2pt);
      \draw (7.3,.4) node {$y := \rho_{R,x_0}(hx)$};
      \fill [black,opacity=.5] (11,-1) circle (2pt);
      \draw (11.5,-1) node {$ghx$};
      \fill [black,opacity=.5] (9.5,-1) circle (2pt);
      \draw (9.5,-1.3) node {$gy$};
      \draw[thin] (9.5,-1) -- (11,-1);
      \fill [black,opacity=.5] (10.5,-1) circle (2pt);
      \draw [thin, densely dashed] (0,0) -- (10.5,-1);
      \fill [black,opacity=.5] (8.47,-.81) circle (2pt);
      \draw (7.6,-1.15) node {$H^R_{g,h}(x,t)$}; 
    \end{tikzpicture}
  \end{equation*}
\end{definition}

\begin{remark} 
  Notice that $H^R_{g,h}$ is indeed a homotopy from 
  $\varphi^R_g \circ \varphi^R_h$
  to $\varphi_{gh}$ since
  \begin{eqnarray*}
  H^R_{g,h}(x,0) 
  & = & 
  \rho_{R,x_0} 
     \bigl(0 \cdot (gh x) + 1 \cdot (g \cdot \rho_{R,x_0}(hx))\bigr)
  \\
  & = & 
  \rho_{R,x_0} 
     \bigl(g \cdot \rho_{R,x_0}(hx)\bigr)
  \\
  & = & 
  \varphi_g^R \circ \varphi_h^R(x),
  \end{eqnarray*}
  and
  \begin{eqnarray*}
  H^R_{g,h}(x,1) 
  & = & 
  \rho_{R,x_0} 
    \bigl(1 \cdot (ghx) + 0 \cdot (g \cdot \rho_{R,x_0}(hx)\bigr)
  \\ 
  & = & 
  \rho_{R,x_0}(ghx)
  \\ 
  & = & 
  \varphi_{gh}^R(x).
  \end{eqnarray*}
  It turns out that the more obvious homotopy given by convex combination
  $(x,t) \mapsto t \cdot \varphi_{gh}^R(x) + (1-t) \cdot \varphi_g^R \circ \varphi_h^R(x)$
  is not appropriate for our purposes.
\end{remark}

\begin{definition}[The map $\iota$]
  \label{def:the_mao_iota}
   Define the map 
   \[
     \iota \colon G \times X \to \FS(X)
   \]
   as follows. 
   For $(g,x) \in G \x X$ let $\iota(g,x) := c_{gx_0,gx}$.
\end{definition} 

The map $\iota$ is $G$-equivariant for the action an $G \x X$ defined
by $g \cdot (h,x) = (gh,x)$. 


\subsection{The flow estimate}
\label{subsec:flow-restimate}

\begin{proposition}
  \label{prop:flow-estimate-g}
  Let $\beta,L > 0$.
  For all $\delta > 0$ there are $T, r > 0$ such that
  for  $x_1,x_2 \in X$ with $d_X(x_1,x_2) \leq \beta$, 
  $x \in \overline{B}_{r+L}(x_1)$ there is 
  $\tau \in [-\beta,\beta]$ such that
  \begin{equation*}
    d_\FS\bigl( \Phi_{T}(c_{x_1,\rho_{r,x_1}(x)}), 
           \Phi_{T+\tau}(c_{x_2,\rho_{r,x_2}(x)}) \bigr) \leq \delta. 
  \end{equation*}
  \begin{equation*}
    \begin{tikzpicture}
      \clip (-2,-2) rectangle (12,3);
      \fill [black,opacity=.5] (0,-1) circle (2pt);
      \draw (-.5,-1) node {$x_2$};
      \fill [black,opacity=.5] (0,2) circle (2pt);
      \draw (-.5,2) node {$x_1$};
      \fill [black,opacity=.5] (10,-.5) circle (2pt);
      \draw (10.5,-.5) node {$x$};
      \draw[densely dotted, thin] (0,-1) circle (8.5);
      \draw (9.1,2.8) node {$B_r(x_1)$};
      \draw[densely dotted, thin] (0,2) circle (8.5); 
      \draw (9.1,-1.8) node {$B_r(x_1)$};
      \draw[thin, densely dashed] (0,2) -- (10,-.5); 
      \draw[thin, densely dashed] (0,-1) -- (10,-.5);
      \begin{scope}
        \clip (0,-1) circle (8.5);
        \draw[thick] (0,-1) -- (10,-.5);
      \end{scope} 
      \begin{scope}
        \clip (0,2) circle (8.5);
        \draw[thick] (0,2) -- (10,-.5);
      \end{scope}
    \end{tikzpicture}
  \end{equation*}
\end{proposition}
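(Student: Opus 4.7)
The plan is to take $\tau := d_X(x_2, x) - d_X(x_1, x)$; by the triangle inequality $|\tau| \leq d_X(x_1, x_2) \leq \beta$, so $\tau$ is admissible. I will abbreviate $D_i := d_X(x_i, x)$, $g_i := c_{x_i, \rho_{r, x_i}(x)}$, and $f(s) := d_X\bigl(g_1(T+s), g_2(T+\tau+s)\bigr)$. Recall that $g_i$ is constant $x_i$ on $(-\infty, 0]$, coincides with the arc-length parametrization of $[x_i, x]$ on $[0, \min(r, D_i)]$, and is constant $\rho_{r, x_i}(x)$ afterwards.

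The geometric heart of the argument: whenever $T+s \in [0, \min(r, D_1)]$ and $T+\tau+s \in [0, \min(r, D_2)]$ (the \emph{joint moving range}), the points $g_1(T+s)$ and $g_2(T+\tau+s)$ lie on the sides $[x, x_1]$ and $[x, x_2]$ of the geodesic triangle with vertices $x, x_1, x_2$ at the \emph{common} distance $s' := D_1 - T - s = D_2 - T - \tau - s$ from $x$, by the choice of $\tau$. Applying the $\CAT(0)$-inequality to this triangle together with the Euclidean law of cosines at the vertex corresponding to $x$ yields
\[
f(s) \; \leq \; s' \cdot \sqrt{\frac{d_X(x_1, x_2)^2 - (D_1 - D_2)^2}{D_1 D_2}} \; \leq \; \frac{s' \beta}{\sqrt{D_1 D_2}},
\]
which notably vanishes in the degenerate collinear case $|D_1 - D_2| = d_X(x_1, x_2)$. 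I would then take $T := r - A$ for a large constant $A$ (depending on $\delta$) and estimate $d_{\FS}(\Phi_T(g_1), \Phi_{T+\tau}(g_2)) = \int_\IR f(s) e^{-|s|}/2 \, ds$ as the sum of an interior contribution on the joint moving range and two tail contributions.

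In the generic sub-case $D_1, D_2 \geq r - \beta$, one has $s' \leq A + L + |s|$ on the joint moving range and $\sqrt{D_1 D_2} \geq r - \beta$, so the $\CAT(0)$-bound integrates to at most $(A + L + 1)\beta/(r - \beta)$. Outside the joint moving range, at least one geodesic sits at a constant endpoint; by the $1$-Lipschitz property of $g_i$ and an analogous endpoint comparison yielding $d_X(\rho_{r, x_1}(x), \rho_{r, x_2}(x)) \leq \beta\sqrt{1 + L^2/r^2}$, $f$ is uniformly bounded there by some $C = C(\beta, L)$, and the weight $e^{-|s|}/2$ contributes exponentially small factors $e^{-(A - \beta)}$ (future tail) and $e^{-(r - A - \beta)}$ (past tail). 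Sub-cases with $D_i < r - \beta$, where $\rho_{r, x_i}(x) = x$ so $g_i$ terminates at $x$, are easier: for $T \gg \max(D_1, D_2)$ both flowed geodesics equal $x$ near $s = 0$, and only the past tails contribute. Given $\delta > 0$, I would pick $A$ of order $\log(1/\delta)$ to kill the tails, and then $r$ of order $\beta (L + \log(1/\delta))/\delta$ to kill the interior contribution. The subtle part is the uniform case analysis: the interior $\CAT(0)$-bound and the endpoint-tail bound behave \emph{oppositely} in the acute configuration (small interior, tail of order $\beta L/r$) versus the collinear one (vanishing interior, tail of order $\beta$), and it is the choice $T = r - A$ with $A$ large that pushes whichever fails to be small on its own into the exponentially decaying part of $e^{-|s|}$.
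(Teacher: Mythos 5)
Your overall strategy coincides with the paper's: both proofs take $\tau := d_X(x_2,x)-d_X(x_1,x)$, observe that for this $\tau$ the flowed geodesics sit at a \emph{common} distance from $x$ on the two sides of the triangle $\Delta(x,x_1,x_2)$, apply the $\CAT(0)$-comparison there, and split the integral defining $d_\FS$ into an interior window (where the $\CAT(0)$-bound is small) and two exponentially suppressed tails. Your $T=r-A$ with $A$ a large constant is, after a change of parameters, the same as the paper's $T=r'+r''$, $r=r''+2r'+\beta$ (so $r-T=r'+\beta$). Your $\CAT(0)$-estimate $f(s)\le s'\beta/\sqrt{D_1D_2}$ is the exact Euclidean law-of-cosines bound; the paper uses the slightly coarser $2\beta s'/D_1$ (from Lemma~\ref{lem:triangles}), but both suffice.

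There is, however, a genuine gap in your sub-case decomposition. You split on ``$D_1,D_2\ge r-\beta$'' versus ``$D_i<r-\beta$,'' and in the second sub-case you claim ``for $T\gg\max(D_1,D_2)$ both flowed geodesics equal $x$ near $s=0$.'' But $T=r-A$ is fixed and one can have $D_1\in(T,\,r-\beta)=(r-A,\,r-\beta)$ whenever $A>\beta$; in this intermediate regime both $\rho_{r,x_i}(x)=x$, yet $T+s<D_1$ near $s=0$, so the geodesics are \emph{not} at $x$ there and the ``only past tails contribute'' conclusion fails. The regime is still harmless (there $D_1>T=r-A$, so $\sqrt{D_1D_2}$ is still of order $r$ and $s'\le D_1-T\le A-\beta$ near $s=0$, and the same interior estimate as in your generic case applies), but your stated dichotomy does not cover it. The paper sidesteps this entirely: Lemma~\ref{lem:triangles} dichotomizes not on the size of $D_i$ but on whether $t\ge D_1$ for the given parameter $t$ in the central window; if so $f=0$, and if not then automatically $D_1>t\ge T-r'=r''$, so the $\CAT(0)$-bound is uniform with no further case split. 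You should either adopt that parametric dichotomy or explicitly add the regime $D_1\in(T,r-\beta)$ to your generic case.
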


The proof requires some preparation.

\begin{lemma}
  \label{lem:triangles}
  Let $r', L, \beta > 0$, $r'' > \beta$.
  Set $T := r'' + r'$, $r := r'' + 2r' + \beta$.
  Let $x_1, x_2 \in X$ such that $d_X(x_1,x_2) \leq \beta$.
  Let $x \in \overline{B}_{r+L}(x_1)$.
  Set $\tau := d_X(x_2,x) - d_X(x_1,x)$.
  Then for all $t \in [T - r', T + r']$
  \begin{enumerate}
  \item \label{lem:triangles:estimate} 
       $d_X(c_{x_1,x}(t),c_{x_2,x}(t + \tau)) \leq
             \frac{2\cdot \beta \cdot   (L + 2r' + \beta)}{r''}$; 
  \item \label{lem:triangles:no-rho}
    $c_{x_1,\rho_{r,x_1}(x)}(t) = c_{x_1,x}(t)$ and
    $c_{x_2,\rho_{r,x_2}(x)}(t+\tau) = c_{x_2,x}(t+\tau)$.  
  \end{enumerate}
\end{lemma}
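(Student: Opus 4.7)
The plan is to treat (ii) as a bookkeeping step showing that the generalized geodesics $c_{x_i,\rho_{r,x_i}(x)}$ and $c_{x_i,x}$ agree on the relevant parameter range, and to derive (i) from the CAT(0) convexity of the metric applied to the two geodesics emanating from $x$ to $x_1$ and to $x_2$.

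For (ii), note that $\rho_{r,x_i}(x)$ lies on the geodesic segment from $x_i$ to $x$ (it equals $x$ if $d_X(x_i,x)\le r$ and otherwise lies at distance $r$ from $x_i$). In either case $c_{x_i,\rho_{r,x_i}(x)}$ coincides with $c_{x_i,x}$ on $[0,r]$. So (ii) reduces to checking $t\in [0,r]$ and $t+\tau\in [0,r]$. The triangle inequality gives $|\tau|\le d_X(x_1,x_2)\le \beta$, whence $t\in [r'',r''+2r']\subset(0,r]$ and $t+\tau\in [r''-\beta,r''+2r'+\beta]=[r''-\beta,r]$, and $r''-\beta>0$ is exactly what the hypothesis $r''>\beta$ buys us.

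For (i) I would first dispose of the degenerate case $t>d_X(x_1,x)$: then $c_{x_1,x}(t)=x$, and since $t+\tau-d_X(x_2,x)=t-d_X(x_1,x)$ by definition of $\tau$, also $c_{x_2,x}(t+\tau)=x$, so the distance vanishes. Otherwise put $s:=d_X(x_1,x)-t=d_X(x_2,x)-(t+\tau)\ge 0$, so the two points to be compared are precisely the points at distance $s$ from $x$ along the geodesics to $x_1$ and $x_2$ respectively. Reparameterize each of these geodesics on $[0,1]$ proportional to arc length as $c_i$ with $c_i(0)=x$, $c_i(1)=x_i$; then the points in question are $c_i(\lambda_i)$ with $\lambda_i:=s/d_X(x,x_i)$. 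The CAT(0) convexity of the metric, applied to the two rescaled geodesics with common initial point $x$, gives $d_X(c_1(\lambda),c_2(\lambda))\le \lambda\,d_X(x_1,x_2)\le \lambda\beta$. Splitting $d_X(c_1(\lambda_1),c_2(\lambda_2))$ via the triangle inequality into this ``vertical'' term $\le \lambda_1\beta$ plus the ``horizontal'' sliding term $|\lambda_1-\lambda_2|\,d_X(x,x_2)=s|\tau|/d_X(x,x_1)\le s\beta/d_X(x,x_1)$, I obtain the upper bound $2s\beta/d_X(x,x_1)$. Inserting $s\le (r+L)-r''=L+2r'+\beta$ (from $d_X(x_1,x)\le r+L$ and $t\ge r''$) together with $d_X(x,x_1)\ge t\ge r''$ produces exactly the claimed bound.

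I do not anticipate a serious obstacle. The key conceptual move is to compare the two points at equal distance from the common endpoint $x$ rather than at equal parameter from $x_1,x_2$, which is precisely what the time shift by $\tau$ achieves; after that the CAT(0) convexity of the metric does essentially all the work. The only delicate part is verifying that the chosen constants $T=r''+r'$ and $r=r''+2r'+\beta$ keep both $t$ and $t+\tau$ strictly inside $(0,r]$ uniformly in $x_1,x_2,x$, which was already needed for (ii) and falls out of the hypothesis $r''>\beta$.
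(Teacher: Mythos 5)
Your proof is correct and follows essentially the same route as the paper: for (ii) it is the same range check that $t$ and $t+\tau$ land in $[0,r]$, and for (i) both arguments hinge on the observation that $c_{x_1,x}(t)$ and $c_{x_2,x}(t+\tau)$ are at equal distance $s$ from the common vertex $x$, then bound their separation by CAT(0) convexity on the triangle $\Delta_{x,x_1,x_2}$. The only difference is cosmetic: you unpack the ``vertical plus horizontal'' triangle-inequality decomposition behind the bound $\tfrac{2\,d_X(x_1,x_2)\,s}{d_X(x,x_1)}$, which the paper records in one line without detail.
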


\begin{proof}~\ref{lem:triangles:estimate}
  Let $t \in [T - r', T + r']$.
  Note that $|\tau| \leq \beta$. 
  From $T - r' = r'' > \beta$ we conclude $t, t + \tau > 0$.
  If $t \geq d_X(x,x_1)$, then 
  $c_{x_1,x}(t) = x = c_{x_2,x}(t+\tau)$
  and the assertion follows
  in this case. Hence we can assume $0 < t < d_X(x,x_1)$.
  A straight forward computation shows that 
  $0 < t + \tau < d_X(x,x_2)$ and 
  $d_X(c_{x_1,x}(t),x) = d_X(c_{x_2,x}(t + \tau),x)$.
  We get $r'' = T - r' \leq t < d_X(x,x_1)$.
  Applying the $\CAT(0)$-condition to the 
  triangle $\Delta_{x,x_1,x_2}$ we deduce
  that
  \begin{equation*}
    d_X(c_{x_1,x}(t),c_{x_2,x}(t + \tau)) \leq
    \frac{2 \cdot d_X(x_1,x_2) \cdot  (d_X(x,x_1) - t)}{d_X(x,x_1)}
     \leq \frac{2 \cdot \beta \cdot (d_X(x,x_1) - t)}{d_X(x,x_1)}.
  \end{equation*}
  Combining this with $d_X(x,x_1) > r''$ and 
  $d_X(x,x_1) - t \leq (r + L) - (T - r') 
       = r'' + 2r' + \beta + L - r'' - r' + r' 
       = 2r' + \beta + L$
  we obtain the asserted inequality. 
  \\[1ex]~\ref{lem:triangles:no-rho}
  We have $t \leq T + r' = 2r' + r'' = r - \beta$
  and $t \geq T - r' = r'' > \beta$.
  Thus $t,t + \tau \in [0,r]$.
  Thus 
  $c_{x_1,\rho_{r,x_1}(x)}(t) = c_{x_1,x}(t)$ and
  $c_{x_2,\rho_{r,x_2}(x)}(t+\tau) = c_{x_2,x}(t+\tau)$.  
\end{proof}

\begin{proof}[Proof of Proposition~\ref{prop:flow-estimate-g}]
  Let $\delta > 0$ be given.
  Pick $r' > 0$, $r'' > \beta$, $1 > \delta' > 0$ such that
  \begin{equation*}
    \int_{-\infty}^{-r'} \frac{1 + |t|}{e^{|t|}}dt 
    \leq  \frac{\delta}{3} 
    \quad , \quad
    \int_{-r'}^{r'} \frac{\delta'}{2e^{|t|}} dt \leq \frac{\delta}{3} 
  \end{equation*}
  and
  \begin{equation*}
    \frac{2 \cdot \beta  (L + 2r' + \beta)}{r''}
    \leq 
    \delta'.
  \end{equation*}
  Set $r := 2r' + r'' + \beta$ and 
  $T := r' + r''$.
  Let $x_1, x_2 \in X$ with $d_X(x_1,x_2) \leq \beta$.
  Let $x \in \overline{B}_{r}(x_1)$ be given.
  Set $\tau := d_X(x_2,x) - d_X(x_1,x)$.
  Then $|\tau| \leq d_X(x_2,x_1) \leq \beta$.    
  Using Lemma~\ref{lem:triangles} we conclude that
  for all $t \in [-r',r']$
  \begin{eqnarray*}
    \lefteqn{ d_X \bigl( c_{x_1,\rho_{r,x_1}(x)} (T + t), 
                    c_{x_2,\rho_{r,x_2}(x)} (T + t + \tau) \bigr) 
    } \quad \quad \quad \quad
    & & \\
    & = & 
    d_X \bigl( c_{x_1,x} (T + t),  c_{x_2,x} (T + t + \tau) \bigr) 
    \\
    & \leq & \frac{2 \cdot  \beta  (L + 2r' + \beta)}{r''}
    \; \leq \; \delta'.
  \end{eqnarray*}
  Thus
  \begin{eqnarray*}
    \lefteqn{ d_\FS\bigl( \Phi_{T}(c_{x_1,\rho_{r,x_1}(x)}), 
                      \Phi_{T+\tau}(c_{x_2,\rho_{r,x_2}(x)}) \bigr)
    } \quad \quad
    & & \\
    & = &
    \int_{-\infty}^{\infty} 
       \frac{ d_X \bigl( c_{x_1,\rho_{r,x_1}(x)} (T + t), 
                    c_{x_2,\rho_{r,x_2}(x)} (T + t + \tau) \bigr) }
            { 2 e^{|t|} } 
    dt
    \\
    & \leq &
    \int_{-\infty}^{-r'} 
       \frac{ 2|t| + \delta'}{ 2 e^{|t|} } dt
    + \int_{-r'}^{r'} 
       \frac{ \delta'}{ 2 e^{|t|} } dt
    + \int_{r'}^{\infty} 
       \frac{ \delta'  + 2|t| }{ 2 e^{|t|} } dt 
    \\
    & \leq &
    \int_{-\infty}^{-r'} 
       \frac{ |t| + 1}{  e^{|t|} } dt
    + \int_{-r'}^{r'} 
       \frac{ \delta'}{ 2 e^{|t|} } dt
    + \int_{r'}^{\infty} 
       \frac{ 1  + |t| }{  e^{|t|} } dt 
    \\
    & \leq &
    \frac{\delta}{3} + \frac{\delta}{3} + \frac{\delta}{3} 
    \; = \; \delta.
  \end{eqnarray*}
\end{proof}

\begin{lemma}
  \label{lem:flow-distorsion}
  Let $\e > 0$, $\beta > 0$. 
  Then there is $\delta > 0$ such that for all $|\tau| \leq \beta$
  \begin{equation*}
    d_\FS(c_0,c_1) \leq \delta \; \implies 
             d_\FS\bigl(\Phi_\tau(c_0),\Phi_\tau(c_1)\bigr) \leq \e
  \end{equation*}
  for $c_0$, $c_1 \in \FS(X)$.
\end{lemma}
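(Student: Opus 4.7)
The plan is to observe that this lemma is essentially an immediate consequence of the Lipschitz-type estimate already established in Lemma~\ref{lem:Phi_well-defined}. Recall that lemma states
\[
  d_{\FS(X)}\bigl(\Phi_{\tau}(c), \Phi_{\sigma}(d)\bigr) \leq e^{|\tau|} \cdot d_{\FS(X)}(c,d) + |\sigma - \tau|
\]
for all $c,d \in \FS(X)$ and $\tau,\sigma \in \IR$. Specializing to $\sigma = \tau$ kills the second summand and yields the clean inequality $d_\FS(\Phi_\tau(c_0),\Phi_\tau(c_1)) \leq e^{|\tau|} \cdot d_\FS(c_0,c_1)$.

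Given $\e, \beta > 0$, the plan is therefore to set
\[
  \delta := e^{-\beta} \cdot \e.
\]
Then for any $|\tau| \leq \beta$ and $c_0, c_1 \in \FS(X)$ with $d_\FS(c_0,c_1) \leq \delta$, the displayed inequality gives
\[
  d_\FS\bigl(\Phi_\tau(c_0),\Phi_\tau(c_1)\bigr) \leq e^{|\tau|} \cdot d_\FS(c_0,c_1) \leq e^{\beta} \cdot \delta = \e,
\]
which is exactly the conclusion of the lemma.

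There is no real obstacle here: the lemma asserts uniform continuity of the flow in the $\FS$-variable, uniformly in $\tau$ over the compact interval $[-\beta,\beta]$, and this follows at once from the exponential Lipschitz constant $e^{|\tau|}$ provided by Lemma~\ref{lem:Phi_well-defined}. The only thing to check is that no $\CAT(0)$ hypothesis is needed, and indeed none is: Lemma~\ref{lem:Phi_well-defined} was proved for arbitrary metric spaces $X$, so the argument works in the generality claimed.
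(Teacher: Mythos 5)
Your proof is correct and follows exactly the same route as the paper, which simply cites Lemma~\ref{lem:Phi_well-defined}; you have merely spelled out the choice $\delta = e^{-\beta}\e$ and the specialization $\sigma=\tau$ that the paper leaves implicit.
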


\begin{proof}
  This follows from Lemma~\ref{lem:Phi_well-defined}.
\end{proof}

\begin{proposition}
  \label{prop:flow-estimate-H}
  Let $S$ be a finite subset of $G$ (containing $e$).
  Then there is $\beta >0$ such that the following holds:
 
  For all $\delta > 0$ there are $T,R > 0$ such that
  for every $(a,x) \in G \x \overline{B}_R(X)$, 
  $s \in S$, $f \in F_s(\varphi^R,H^R)$
  there is $\tau \in [-\beta,\beta]$ such that 
  \begin{equation*}
      d_\FS\left(\Phi_T(\iota(a,x)),
          \Phi_{T+\tau}(\iota(as^{-1},f(x)))\right) \leq \delta.
  \end{equation*}
\end{proposition}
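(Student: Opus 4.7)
My approach is to derive Proposition~\ref{prop:flow-estimate-H} from two applications of Proposition~\ref{prop:flow-estimate-g}. First, by the $G$-invariance of $d_\FS$ (the $G$-action on $\FS(X)$ is isometric) together with $g\cdot c_{p,q}=c_{gp,gq}$, multiplying by $sa^{-1}$ reduces the claim to
\[
  d_\FS\bigl(\Phi_T(c_{sx_0,sx}),\Phi_{T+\tau}(c_{x_0,f(x)})\bigr)\le\delta.
\]
Any $f\in F_s(\varphi^R,H^R)$ has the form $f=H^R_{u,v}(\cdot,t_0)$ with $uv=s$, so $f(x)=\rho_{R,x_0}(y_H)$ where $y_H:=t_0\cdot sx+(1-t_0)\cdot(u\cdot\rho_{R,x_0}(vx))$. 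Setting $\beta_0:=\max_{g\in S}d_X(gx_0,x_0)$, the equivariance of $\rho$ and its distance-non-increasing property (it is the projection to a convex subset of a $\CAT(0)$-space) yield $d_X(sx,y_H)\le\beta_0$, hence $d_X(sx_0,y_H)\le R+\beta_0$.

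Set $\beta:=2\beta_0$. Given $\delta>0$, apply Proposition~\ref{prop:flow-estimate-g} with parameters $\beta_0$, $L:=\beta_0$, error $\delta/2$ to obtain $T$ and $r$; set $R:=r$. Applying Proposition~\ref{prop:flow-estimate-g} with apex $y_H$ (the hypothesis $y_H\in\overline{B}_{r+L}(sx_0)$ follows from $R+\beta_0=r+L$) produces $\tau_1\in[-\beta_0,\beta_0]$ with
\[
  d_\FS\bigl(\Phi_T(c_{sx_0,\bar A}),\Phi_{T+\tau_1}(c_{x_0,f(x)})\bigr)\le\delta/2,
\]
where $\bar A:=\rho_{r,sx_0}(y_H)$ and $\rho_{r,x_0}(y_H)=f(x)$ because $r=R$. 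It remains to find $\tau_2\in[-\beta_0,\beta_0]$ with
\[
  d_\FS\bigl(\Phi_T(c_{sx_0,sx}),\Phi_{T+\tau_2}(c_{sx_0,\bar A})\bigr)\le\delta/2;
\]
then $\tau:=\tau_1+\tau_2\in[-\beta,\beta]$ closes the argument via the triangle inequality in $\FS(X)$.

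For this final step, a case split on whether $d(sx_0,y_H)\le r$ (giving $\bar A=y_H$) or $>r$ (giving $\bar A=c_{sx_0,y_H}(r)$) establishes $d_X(\bar A,sx)\le 2\beta_0$ and $|d(sx_0,\bar A)-d(sx_0,sx)|\le\beta_0$. The natural choice is $\tau_2:=d(sx_0,\bar A)-d(sx_0,sx)$, which aligns the two flowed generalised geodesics so that they become constant at the same parameter. Because the internal machinery of Proposition~\ref{prop:flow-estimate-g} forces $T<r$, both $\Phi_T(c_{sx_0,sx})$ and $\Phi_{T+\tau_2}(c_{sx_0,\bar A})$ are in their ``active'' regime on the central interval $[T-r',T+r']$. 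A triangle comparison in the spirit of Lemma~\ref{lem:triangles}, applied to the triangle $(sx_0,sx,\bar A)$ whose top side has length $\le 2\beta_0$ and whose other two sides share the vertex $sx_0$, bounds the integrand by a quantity of order $\beta_0(L+r')/r''$, while the tails of the $d_\FS$-integral are suppressed by the exponential weight $1/(2e^{|t|})$. Taking $r''$ large enough relative to $\beta_0$ yields the desired $\delta/2$.

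The main obstacle is precisely the last estimate: Lemma~\ref{lem:triangles} controls things via the ``base'' $d(x_1,x_2)$, which here degenerates to zero since both geodesics start at $sx_0$. A naive $\CAT(0)$-convexity argument for two geodesics emanating from the same point yields only a $d_\FS$-bound of order $\beta_0$, which does not shrink with $T$ and $r$. The remedy is a dual triangle analysis in which the controlling quantity is the ``top'' $d_X(sx,\bar A)\le 2\beta_0$ and the alignment shift $\tau_2$ plays the role of the time-shift $\tau$ in Lemma~\ref{lem:triangles}; carrying this through rigorously requires a careful $\CAT(0)$-comparison in the triangle $(sx_0,sx,\bar A)$ that parallels, but does not literally reuse, the proof of Lemma~\ref{lem:triangles}.
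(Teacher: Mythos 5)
Your reduction by $sa^{-1}$ and the observation that any $f \in F_s(\varphi^R, H^R)$ satisfies $f(x) = \rho_{R,x_0}(y_H)$ with $d_X(sx, y_H) \le \beta_0$ are both correct, and your first application of Proposition~\ref{prop:flow-estimate-g} (basepoints $sx_0, x_0$, apex $y_H$) is sound. The gap is the second step, and it is not a matter of carrying through a careful $\CAT(0)$-comparison more rigorously: the estimate
\[
d_\FS\bigl(\Phi_T(c_{sx_0, sx}), \Phi_{T+\tau_2}(c_{sx_0, \bar{A}})\bigr) \leq \delta/2
\]
is false in general. The two geodesics share the basepoint $sx_0$ and have far endpoints $sx$ and $\bar{A}$ at distance roughly $\beta_0$; the $\CAT(0)$-comparison in the triangle $(sx_0, sx, \bar{A})$ gives, for parameter $t$, a transverse distance of order $\frac{t}{d_X(sx_0,sx)}\cdot d_X(sx,\bar{A})$, which \emph{grows} towards $d_X(sx,\bar{A})\approx\beta_0$ as $t$ approaches the far end, rather than shrinking like $\beta_0(L+r')/r''$. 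One sees this most bluntly when $d_X(x_0,x) \le T$: then $\Phi_T(c_{sx_0,sx})$ and $\Phi_{T+\tau_2}(c_{sx_0,\bar{A}})$ are already in their constant regime at parameter $0$, with values $sx$ and $\bar{A}$, and their $d_\FS$-distance is a positive multiple of $d_X(sx,\bar{A})$ independent of $r''$. Lemma~\ref{lem:triangles} and the flow contraction in Proposition~\ref{prop:flow-estimate-g} depend essentially on the two geodesics converging to a \emph{common} apex; there is no ``dual'' contraction for a pair that diverges from a common base.

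The paper circumvents this by a different choice of intermediate geodesic. In your notation ($s = uv$), the right intermediate is $c_{ux_0,\,\rho_{R, ux_0}(y_H)}$, not $c_{sx_0,\,\rho_{r, sx_0}(y_H)}$. The key identity is
\[
\rho_{R, ux_0}(y_H) \;=\; \rho_{R, ux_0}(sx),
\]
which holds because $u\rho_{R,x_0}(vx) = c_{ux_0,sx}(R) = \rho_{R,ux_0}(sx)$, so that $y_H = t_0\cdot sx + (1-t_0)\cdot\rho_{R,ux_0}(sx)$ lies on the geodesic ray from $ux_0$ through $\rho_{R,ux_0}(sx)$ towards $sx$. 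With this intermediate both triangle-inequality pieces are of the ``nearby basepoints, common apex'' type covered by Proposition~\ref{prop:flow-estimate-g}: one compares $c_{sx_0,sx}$ with $c_{ux_0,\rho_{R,ux_0}(sx)}$ (basepoints $sx_0, ux_0$ at distance $d_X(vx_0,x_0)\le\beta_0$, apex $sx$), the other compares $c_{ux_0,\rho_{R,ux_0}(y_H)}$ with $c_{x_0,\rho_{R,x_0}(y_H)} = c_{x_0,f(x)}$ (basepoints $ux_0, x_0$, apex $y_H$). You also need to interleave Lemma~\ref{lem:flow-distorsion}, as the paper does, since the second estimate must be flowed by $\tau_1$ before the triangle inequality can be applied; this costs a preliminary reduction of the target error from $\delta/2$ to a smaller $\delta_0$.
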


\begin{proof}
  Pick $\beta$ such that $\frac{\beta}{2} \geq d_X(s x_0, x_0)$ 
  for all $s \in S$.
  Let $L := \beta$.
  Let $\delta > 0$ be given.
  By Lemma~\ref{lem:flow-distorsion} there is 
  $\frac{\delta}{2} > \delta_0 > 0$ such that
  for $|\tau'| \leq \beta$ 
  \begin{eqnarray}
    d_\FS(c_0,c_1) \leq \delta_0 \; \implies 
             d_\FS\bigl(\Phi_{\tau'}(c_0),\Phi_{\tau'}(c_1)\bigr) 
         \leq \frac{\delta}{2}
  \label{eq:ottl_und_lell}
  \end{eqnarray}
  for $c_0$, $c_1 \in \FS(X)$.
  By Proposition~\ref{prop:flow-estimate-g} there are $T, R > 0$
  such that for $x,x_1,x_2 \in X$ with $d_X(x_1,x_2) \leq \beta$
  and $d_X(x,x_1) \leq R + L$ there is 
  $\tau = \tau(x,x_1,x_2) \in [-\frac{\beta}{2},\frac{\beta}{2}]$
  such that 
  \begin{equation*}
    d_\FS \big( \Phi_{T}(c_{x_1,\rho_{R,x_1}(x)}), 
           \Phi_{T+\tau}(c_{x_2,\rho_{R,x_2}(x)}) \bigr) \leq \delta_0. 
  \end{equation*}
  Let $(a,x) \in G \in \overline{B}_R(X)$, 
  $s \in S$, $f \in F_s(\varphi^R,H^R)$.
  Note that 
  \begin{equation*}
    d_\FS\bigl(\Phi_T(\iota(a,x)),
          \Phi_{T+\tau}(\iota(as^{-1},f(x)))\bigr) 
    = 
    d_\FS\bigl(\Phi_T(\iota(e,x)),
          \Phi_{T+\tau}(\iota(s^{-1},f(x)))\bigr) 
  \end{equation*}
  because $\iota$ and $\Phi$ are $G$-equivariant
  and $d_{\FS}$ is $G$-invariant.
  Thus it suffices to consider the case $a = e$.
  Then there are $t \in [0,1]$ and $g,h \in S$ such that
  $s = gh$ and 
  $f(x) = H^R_{g,h}(x,t) = \rho_{R,x_0} 
         \bigl(t \cdot (ghx) + (1-t)\cdot (g \cdot
         \rho_{R,x_0}(hx))\bigr)$.
  Therefore 
  $s^{-1} f(x) = \rho_{R,s^{-1}x_0} 
       \bigl(t \cdot x + (1-t) \cdot \rho_{R,h^{-1}x_0}(x) \bigr)$.
  Set $z := t \cdot x + (1-t) \cdot \rho_{R,h^{-1}x_0}(x)$.
  Then $\iota(s^{-1},f(x)) = c_{s^{-1}x_0,\rho_{R,s^{-1}x_0}(z)}$.
  \begin{equation*}
    \begin{tikzpicture}
       \clip (-2,-2.2) rectangle (10,1.8);
       \fill [black,opacity=.5] (2.5,1.5) circle (2pt);
       \draw (2.2,1.5) node {$x_0$};
       \fill [black,opacity=.5] (9,0) circle (2pt);
       \draw (9.3,0) node {$x$};
       \fill [black,opacity=.5] (0,0) circle (2pt);
       \draw (-.6,0) node {$h^{-1}x_0$}; 
       \fill [black,opacity=.5] (7,0) circle (2pt);
       \draw (6,.25) node {$\rho_{R,h^{-1}x_0}(x)$};
       \fill [black,opacity=.5] (8,0) circle (2pt);
       \draw (8,-.3) node {$z$};
       \fill [black,opacity=.5] (-.5,-2) circle (2pt);
       \draw (-.9,-1.7) node {$s^{-1}x_0$};
       \fill [black,opacity=.5] (6.32,-.39) circle (2pt);
       \draw (6.7,-.7) node {$\rho_{R,s^{-1}x_0}(z)$};
       \draw[thick] (2.5,1.5) -- (9,0);
       \draw[densely dashed, thin] (0,0) -- (9,0);
       \draw[thick] (0,0) -- (7,0); 
       \draw[densely dashed, thin] (-.5,-2) -- (8,0);
       \begin{scope}
         \clip (-.5,-2) circle (7);
         \draw[thick] (-.5,-2) -- (8,0);
       \end{scope}
    \end{tikzpicture}
  \end{equation*} 
  We have $d_X(x,x_0) \leq R$.
  Moreover, $d_X(z,h^{-1}x_0) \leq d_X(x,h^{-1}x_0) 
                    \leq d_X(x,x_0) + d_X(x_0,h^{-1}x_0) 
                    \leq R + L$. 
  Therefore, we can set 
  $\tau_1 := \tau(x,x_0,h^{-1}x_0)$,
  $\tau_2 := \tau(z,h^{-1}x_0,s^{-1}x_0)$ and
  $\tau := \tau_1 + \tau_2$.
  Note that $|\tau| \leq \beta$, since $|\tau_i| \leq \frac{\beta}{2}$.
  We have
  \begin{equation*}
        d_\FS \bigl( \Phi_{T}(c_{h^{-1}x_0,\rho_{R,h^{-1}x_0}(z)}) , 
                \Phi_{T+\tau_2}(c_{s^{-1}x_0,\rho_{R,s^{-1}x_0}(z)})  \bigr)
        \leq \delta_0
  \end{equation*}
  and therefore by~\eqref{eq:ottl_und_lell}
  \begin{equation*}
         d_\FS \bigl( \Phi_{T+\tau_1}(c_{h^{-1}x_0,\rho_{R,h^{-1}x_0}(z)}) , 
                \Phi_{T+\tau_1+\tau_2}(c_{s^{-1}x_0,\rho_{R,s^{-1}x_0}(z)})  \bigr)
        \leq \frac{\delta}{2}.
  \end{equation*}
  Thus
  \begin{eqnarray*}
    \lefteqn{ d_\FS \bigl( \Phi_T(\iota(e,x)),
                    \Phi_{T+\tau}(\iota(s^{-1},f(x))) \bigr) }
   & & \\
   & = & d_\FS \bigl( \Phi_T(c_{x_0,x}), 
                \Phi_{T+\tau}(c_{s^{-1}x_0,\rho_{R,s^{-1}x_0}(z)})  \bigr)
   \\
   & \leq & d_\FS \bigl( \Phi_T(c_{x_0,x}), 
                \Phi_{T+\tau_1}(c_{h^{-1}x_0,\rho_{R,h^{-1}x_0}(x)})  \bigr)
   \\ & &
           + \; d_\FS \bigl( \Phi_{T+\tau_1}(c_{h^{-1}x_0,\rho_{R,h^{-1}x_0}(z)}) , 
                \Phi_{T+\tau_1+\tau_2}(c_{s^{-1}x_0,\rho_{R,s^{-1}x_0}(z)})  \bigr)
  \\
  & \leq & \frac{\delta}{2} + \frac{\delta}{2} = \delta.
  \end{eqnarray*}
\end{proof}


\typeout{---------- Orbits with bounded G-period ----------------}

\section{Orbits with bounded $G$-period}
\label{sec:bounded-period}

\begin{summary*}
  Let $\FS(X)_{\le \gamma}$ be the part of $\FS(X)$ that contains of in 
  some sense
  periodic orbits, namely, those generalized geodesics for which 
  there exists for
  every $\epsilon > 0$ an element $\tau \in (0, \gamma + \epsilon]$ and 
  $g \in G$ such that $g\cdot c = \Phi_{\tau}(c)$ holds
  (see~\eqref{FS_less_or_equal_gamma}).
  Our main result here is Theorem~\ref{the:covering_of_FS(X)_gamma} 
  that asserts 
  that there is a cover of uniformly bounded dimension for 
  $\FS(X)_{\leq \gamma}$
  that is long in the direction of the flow.
  To this end we study hyperbolic elements in $G$ and their axis.
  These come in parallel families (called $\FS_a$ below) 
  that project to convex subspaces of $X$.
  We construct the desired cover first for the $\FS_a$ by 
  considering the quotient $Y_a$ of $\FS_a$ by the flow.
  One difficulty here is that the group $G_a$ that naturally 
  acts on $Y_a$, does so with infinite isotropy. 
  The isotropy groups here are virtually cyclic and this forces
  the appearance of the family  $\VCyc$ in 
  Theorem~\ref{the:covering_of_FS(X)_gamma} and in our main result.
\end{summary*}

Throughout this section we fix the following convention.

\begin{convention} \label{con:(X,G,K)}
  Let
  \begin{itemize}
  \item $(X,d_X)$ be a $\CAT(0)$-space which is proper
        as a metric space and has finite covering dimension;
  \item $G$ be a group with a proper  
        isometric action on $(X,d_X)$;
  \item $K \subseteq X$ be a compact subset.  
  \end{itemize}
\end{convention}

The following is the main result of this section.

\begin{theorem} 
  \label{the:covering_of_FS(X)_gamma}
  There is a natural number $M$  such that for every 
  $\gamma > 0$ there exists a collection $\calv$ of subsets 
  of $\FS(X)$ satisfying:
  \begin{enumerate}
  \item \label{the:covering_of_FS(X)_gamma:VCyc}
      Each element $V \in \calv$ is an open
      $\VCyc$-subsets of the $G$-space $\FS(X)$  
      (see Definition~\ref{def:F-cover});
  \item \label{the:covering_of_FS(X)_gamma:invariance}
      $\calv$ is $G$-invariant; i.e., for $g \in G$ and 
      $V\in \calv$ we have $g\cdot V \in \calv$;
  \item \label{the:covering_of_FS(X)_gamma:finite}
      $G\backslash \calv$ is finite; 
  \item \label{the:covering_of_FS(X)_gamma:dim}
      We have $\dim \calv \leq M$;
  \item \label{the:covering_of_FS(X)_gamma:flow}
      There is $\e > 0$ with the following property:
      for $c \in \FS_{\leq \gamma}$ such that $c(t) \in G \cdot K$ for
      some $t \in \IR$ there is $V \in \calv$ such
      that $B_\e(\Phi_{[-\gamma,\gamma]}(c)) \subseteq V$.
\end{enumerate}
\end{theorem}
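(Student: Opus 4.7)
The plan is to organize $\FS(X)_{\leq \gamma}$ according to which hyperbolic element of $G$ witnesses the (approximate) periodicity, to analyze each ``axis locus'' separately via the flat-strip theorem, and then to assemble the pieces $G$-equivariantly. For a hyperbolic $a \in G$ with translation length $\ell(a) \in (0,\gamma]$, set $\FS_a \subseteq \FS(X)$ to be the collection of non-constant generalized geodesics $c$ with $a \cdot c = \Phi_{\ell(a)}(c)$. Every non-constant $c \in \FS(X)_{\leq \gamma}$ lies in (or is a limit of) some such $\FS_a$; the constant geodesics form a separate, easily handled piece obtained directly from the compact set $K$.

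The flat-strip theorem for $\CAT(0)$-spaces identifies $\Min(a) \subseteq X$ isometrically with a product $Y_a \times \IR$ where the $\IR$-factor is the axis direction and $a$ acts by translation by $\ell(a)$ on it. This makes $\FS_a$ canonically $\Phi_\IR$-equivariantly equivalent to $Y_a \times \IR \times \{(c_-,c_+) \in \overline{\IR}^2 \mid c_- < c_+\}$, with $\IR$ encoding the time shift. Dividing out $\Phi_\IR$ yields a cross-section $Y_a^{\FS}$ on which the setwise stabilizer $G_a := \{g \in G \mid g\FS_a = \FS_a\}$ acts through $G_a/\langle a \rangle$. Properness of the $G$-action on $X$ forces the point-stabilizers in $G_a$ to contain $\langle a \rangle$ as a finite-index subgroup, hence to be virtually cyclic; this is precisely why the family $\VCyc$ appears in the statement.

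With this structure in hand, I would construct a $G_a$-invariant open cover of $Y_a^{\FS}$ by $\VCyc$-subsets of dimension uniformly bounded in terms of $\dim X$, using finite-dimensionality of $X$ together with Lemma~\ref{lem:dim-overline-X} and Proposition~\ref{pro:dim-for_FS-FSR}. Pulling back to $\FS_a$ gives a $\Phi_\IR$-invariant cover (so ``long in the flow direction'' is automatic), which I thicken transversally by a small $\e > 0$. The compact $K$ governs the assembly: only those hyperbolic $a$ whose axis meets $G \cdot K$ need to contribute, and properness combined with $\ell(a) \leq \gamma$ confines these to finitely many $G$-conjugacy classes. This delivers finiteness of $G\backslash\calv$, the uniform dimension bound via Proposition~\ref{pro:dim-for_FS-FSR}, and the flow-long property built into the construction.

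The main obstacle is the third step: producing the $G_a$-equivariant $\VCyc$-cover of $Y_a^{\FS}$ with dimension bound depending only on $\dim X$, and then matching these covers across different axes compatibly. The difficulty, absent in the hyperbolic case, is that axes of a hyperbolic element come in continuous families parameterized by $Y_a$ (a flat produces a positive-dimensional $Y_a$), and $G_a/\langle a \rangle$ need not act cocompactly on $Y_a$ -- the compact set $K$ plays a taming role, forcing appearance of the auxiliary parameter in Definition~\ref{def:at-infinity_plus_periods} and in the statement of the theorem. Getting the covers on different $\FS_a$ to assemble into a single $G$-invariant family of uniformly bounded dimension, without the isolated periodic orbits that made the hyperbolic argument work, is what consumes the bulk of this section.
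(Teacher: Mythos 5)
Your strategy is essentially the paper's: decompose $\FS(X)_{\leq\gamma}$ into axis loci of hyperbolic elements of translation length $\leq\gamma$, use the flat-strip theorem to split each locus as a $\CAT(0)$-space times $\IR$, quotient by the flow, build a $\VCyc$-cover on each cross-section and assemble $G$-equivariantly, with the compact set $K$ forcing finiteness and the short-translation point-stabilizers forcing $\VCyc$. That matches the paper's Subsections~\ref{subsec:hyperbolic-isometries}--\ref{subsec:axis-in-FS}. But a few specific points in your sketch are off in ways that matter.

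First, your parametrization of $\FS_a$ by $Y_a\times\IR\times\{(c_-,c_+)\in\overline{\IR}^2\mid c_-<c_+\}$ has a spurious third factor: an axis is a full geodesic, so $c_-=-\infty$, $c_+=\infty$ always, and the paper's $\FS_a\cong Y_a\times\IR$ has only the translation parameter along the axis. Second, you index by individual hyperbolic elements $a\in G$, but that makes distinct $\FS_a$'s overlap (e.g.\ $g$ and $g^2$ share all axes, and more subtly two $g,g'$ can have one common axis but different $\Min$'s). The paper instead takes $A_{\leq\gamma}:=G^{\hyp}_{\leq\gamma}/\!\sim$ where $g\sim g'$ means there exist parallel axes for $g$ and $g'$; this makes the pieces pairwise disjoint and lets Lemma~\ref{lem:properties-A}~\ref{lem:properties-A:a-neq-b} produce a uniform $\epsilon$-separation, which in turn is what allows the thickenings $U_a=B_\delta(\FS_a)$ to be pairwise disjoint and the dimensions to add correctly. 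Third, the dimension bound for the cover of $Y_a$ is not a consequence of Lemma~\ref{lem:dim-overline-X} or Proposition~\ref{pro:dim-for_FS-FSR} as you suggest; the paper instead identifies $Y_a$ via the flat-strip theorem with a subset of a convex subspace $Y_a^+$ of $X$ (Proposition~\ref{prop:Y_a}), and then proves $\dim(G_a\backslash Y_a)\leq\dim X$ in Lemma~\ref{lem:dim(Ga_backslash_Y_a_le_dim(X)} by showing the projection is an open, finite-to-one surjection on a cocompact patch and citing Nagami's theorem on finite-to-one open maps. Fourth, a small but clarifying point: with the restriction to axes meeting $G\cdot K$, the $G_a$-action on $Y_a$ \emph{is} cocompact (Lemma~\ref{lem:properties-A}~\ref{lem:properties-A:GFS_a-closed} plus the splitting); this is exactly what $K$ buys you, and is what makes $G_a\backslash\calv_a$ finite in Proposition~\ref{prop:cover-Y_a}. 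Finally, the ``thicken transversally'' step is made precise by Lemma~\ref{lem:extending-opens}, which extends open $G_a$-subsets of $\FS_a$ to open $G$-subsets of $\FS(X)$ by comparing distances to $U$ and to $\FS_a-U$; this extension, intersected with the mutually disjoint $U_a$'s, is what controls dimension and stabilizers globally. You correctly identify which steps carry the weight; what is missing is the machinery (the equivalence relation, the flat-strip identification inside $X$, Nagami, and the metric extension lemma) that makes them work.
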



\subsection{Hyperbolic isometries of spaces}
\label{subsec:hyperbolic-isometries}

We recall some basic facts about isometries of a $\CAT(0)$-space
from~\cite[Chapter~II.6]{Bridson-Haefliger(1999)}. 
Let $\gamma \colon X \to X$  be an isometry. 
The \emph{displacement function} of $\gamma$ is defined by
\begin{eqnarray*}
  d_{\gamma} \colon  X & \to & [0,\infty), \quad x \mapsto d_X(\gamma x,x).
\end{eqnarray*}
The \emph{translation length} of $\gamma$ is defined  by 
\begin{eqnarray*}
  l(\gamma ) & := & \inf\bigl\{d_{\gamma}(x) \mid x \in X\bigr\}.
\end{eqnarray*}
Define a subspace of $X$ by 
\begin{eqnarray*}
  \Min(\gamma) & := & \bigl\{x \in X\mid d_{\gamma}(x) = l(\gamma )\bigr\}.
\end{eqnarray*}
We call $\gamma$ \emph{elliptic} if $\gamma$ has a fixed point and
\emph{hyperbolic} if the displacement function $d_{\gamma}$ attains a 
strictly positive minimum, or, equivalently, 
$l(\gamma ) > 0$ and $\Min(\gamma) \not= \emptyset$. 

\begin{lemma}
  \label{lem:basic_prop_l_plus_Min}
  Let $\gamma \colon X \to X$ be an isometry.
  \begin{enumerate}
  \item \label{lem:basic_prop_l_plus_Min:conjugation}
      If $\alpha \colon X \to X$ is an isometry, 
      then $l(\gamma ) = l(\alpha\gamma\alpha^{-1})$ 
      and $\Min(\alpha\gamma\alpha^{-1}) = \alpha(\Min(\gamma))$;
\label{item:1}  \item \label{lem:basic_prop_l_plus_Min:closed_plus_convex}
      $\Min(\gamma)$ is a closed convex set;
  \item \label{lem:basic_prop_l_plus_Min:hyperbolic}
      The isometry $\gamma$ is hyperbolic  if and only if it
      possesses an \emph{axis}, i.e., there is a geodesic 
      $c \colon \IR \to X$ and $\tau > 0$
      such that $\gamma \circ c(t) = c(t + \tau)$ holds for $t \in \IR$.
      In this case $\tau = l(\gamma )$;
  \item \label{lem:basic_prop_l_plus_Min:union_of_axis}
      Two axes $c$ and $d$ for the hyperbolic isometry $\gamma$ 
      are \emph{parallel}, i.e., 
      $d_X(c(t),d(t))$ is constant in $t \in \IR$. 
      The union of the images $c(\IR)$ of all
      axes $c$ for $\gamma$ is $\Min(\gamma)$.
  \end{enumerate}
\end{lemma}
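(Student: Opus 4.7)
The plan is to verify each of the four assertions using standard properties of isometries and convex functions on $\CAT(0)$-spaces; all of them appear in~\cite[Chapter~II.6]{Bridson-Haefliger(1999)}, but the arguments are short enough to sketch.

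For~\ref{lem:basic_prop_l_plus_Min:conjugation}, I would simply compute that for any isometry $\alpha$ and any $x \in X$
\[
  d_{\alpha\gamma\alpha^{-1}}(x)
  \;=\; d_X(\alpha\gamma\alpha^{-1}x,x)
  \;=\; d_X(\gamma\alpha^{-1}x,\alpha^{-1}x)
  \;=\; d_\gamma(\alpha^{-1}x).
\]
Taking the infimum over $x$ gives $l(\alpha\gamma\alpha^{-1}) = l(\gamma)$, and since $\Min$ is a level set of the displacement function, $\Min(\alpha\gamma\alpha^{-1}) = \alpha(\Min(\gamma))$. For~\ref{lem:basic_prop_l_plus_Min:closed_plus_convex}, the plan is to show that $d_\gamma$ is continuous (immediate from the triangle inequality) and convex. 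Convexity follows from the general $\CAT(0)$-fact that $t \mapsto d_X(c(t),c'(t))$ is convex for any two geodesics $c,c'$: apply this to the geodesics from $x$ to $y$ and from $\gamma x$ to $\gamma y$. Then $\Min(\gamma) = d_\gamma^{-1}(l(\gamma))$ is a sublevel set of a continuous convex function, hence closed and convex.

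For~\ref{lem:basic_prop_l_plus_Min:hyperbolic}, the nontrivial direction is that a hyperbolic isometry admits an axis. Given $x \in \Min(\gamma)$, I would take the geodesic segment $\sigma\colon [0,l(\gamma)] \to X$ from $x$ to $\gamma x$ and form its $\gamma$-translates $\gamma^n\circ \sigma$, concatenating them into a map $c\colon \IR \to X$ that satisfies $\gamma c(t) = c(t+l(\gamma))$. The key step, which is the main technical point of the lemma, is to show that this concatenation is an isometric embedding: it suffices to verify this across one junction, and there it follows because otherwise one could shortcut through the midpoint of the corner and use convexity of $d_\gamma$ to produce a point of displacement strictly smaller than $l(\gamma)$, contradicting $x \in \Min(\gamma)$. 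The converse direction is immediate: if $c$ is an axis with translation $\tau$, then $d_\gamma(c(t)) = \tau$ for every $t$, so $l(\gamma) \le \tau$; convexity of $d_\gamma$ together with $\gamma$-equivariance of $c$ rules out smaller displacement and yields $l(\gamma) = \tau$.

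Finally, for~\ref{lem:basic_prop_l_plus_Min:union_of_axis}, observe that if $c$ and $d$ are both axes of $\gamma$ with common translation length $l(\gamma)$, then $t \mapsto d_X(c(t),d(t))$ is convex (standard $\CAT(0)$-fact) and $l(\gamma)$-periodic (because $\gamma$ shifts both by $l(\gamma)$), hence constant, proving parallelism. The inclusion $c(\IR) \subseteq \Min(\gamma)$ for any axis is immediate from the definition, and the reverse inclusion is precisely what the axis construction in~\ref{lem:basic_prop_l_plus_Min:hyperbolic} shows: every $x \in \Min(\gamma)$ lies on some axis. The only real obstacle is the junction-is-geodesic argument in~\ref{lem:basic_prop_l_plus_Min:hyperbolic}; everything else reduces to convexity and continuity of the displacement function.
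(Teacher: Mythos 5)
The paper's proof of this lemma is simply a list of citations to Bridson--Haefliger (II.6.2(2), II.6.2(3), II.6.8(1), II.6.8(3)); it gives no argument of its own. Your sketch reconstructs the standard arguments behind those references, and is essentially correct: (i) is the one-line computation $d_{\alpha\gamma\alpha^{-1}}(x)=d_\gamma(\alpha^{-1}x)$; (ii) follows from continuity and convexity of the displacement function, since $\Min(\gamma)$ is the sublevel set $\{d_\gamma\le l(\gamma)\}$; and (iv) follows from convexity and $l(\gamma)$-periodicity of $t\mapsto d_X(c(t),d(t))$ plus the observation $c(\IR)\subseteq\Min(\gamma)$, with the reverse inclusion coming from (iii).

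One small remark on the geodesic-at-the-junction step in (iii). Your phrasing ``shortcut through the midpoint of the corner and use convexity of $d_\gamma$'' is a bit vague and slightly overstates what is needed. The cleanest version is: take $m$ at distance $\epsilon$ before the corner $x$ on the incoming segment, so that $\gamma m$ lies at distance $\epsilon$ past $x$ on the outgoing segment. The concatenated path from $m$ to $\gamma m$ has length $2\epsilon$, so $d_\gamma(m)=d_X(m,\gamma m)\le 2\epsilon$; if the concatenation failed to be geodesic there, this would be strict, forcing $d_\gamma(m)<l(\gamma)$ after adding the remaining $l(\gamma)-2\epsilon$ of the segment -- more precisely, $d_X(m,\gamma m)<2\epsilon$ together with $d_X(\gamma m,\gamma x)=l(\gamma)-\epsilon$ does not immediately give a contradiction; the point is rather that $d_\gamma(m)\ge l(\gamma)$ always, while the path through $x$ from $m$ to $\gamma m$ realizes length exactly $l(\gamma)$ once you take $m$ to be the actual midpoint of $[\gamma^{-1}x,x]$, so $d_\gamma(m)=l(\gamma)$ and the path through $x$ is the geodesic by uniqueness in CAT(0). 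No appeal to convexity of $d_\gamma$ is needed for this step (it is used in Bridson--Haefliger only to see that the midpoint again lies in $\Min(\gamma)$, which is a slightly different bookkeeping). The contradiction is also with $l(\gamma)=\inf d_\gamma$, not literally with $x\in\Min(\gamma)$. These are wording issues; the underlying idea is correct, and overall your sketch matches the content of the cited reference.
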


\begin{proof}
  See~\cite[II.6.2(2) on p.229]{Bridson-Haefliger(1999)} 
  for~\ref{lem:basic_prop_l_plus_Min:conjugation}. 
  See~\cite[II.6.2(3) on p.229]{Bridson-Haefliger(1999)}
  for~\ref{lem:basic_prop_l_plus_Min:closed_plus_convex}.
  See~\cite[II.6.8(1) on p.231]{Bridson-Haefliger(1999)}
  for~\ref{lem:basic_prop_l_plus_Min:hyperbolic}. 
  See~\cite[II.6.8(3) on p.231]{Bridson-Haefliger(1999)} 
  for~\ref{lem:basic_prop_l_plus_Min:union_of_axis}.
\end{proof}

We emphasize that an axis for a hyperbolic element $\gamma$ is a 
(parametrized) geodesic $c \colon \IR \to X$ and not only $c(\IR)$. 
So two hyperbolic elements $\gamma_1$ and $\gamma_2$ have a common axis 
if there exists a  geodesic $c \colon \IR \to X$ such that 
$\gamma_1\cdot c(t) = c(t + l(\gamma_1))$ and
$\gamma_2\cdot c(t) = c(t + l(\gamma_2))$ holds for all $t \in \IR$.
We denote by $l(g)$ the 
translation length of the isometry $X \to X$ given by 
multiplication with $g \in G$. 
We will say that $g$ is hyperbolic if this isometry is hyperbolic.

\begin{lemma}
  \label{lem:type-I-VCyc}
  Let $c\colon \IR \to X$ be a geodesic. 
  Put 
  \begin{multline*}
  \hspace{10mm} G_{\Phi_{\IR}(c)} = \{g \in G \mid g(\Phi_{\IR}(c)) 
                                = \Phi_{\IR}(c)\} 
  \\
  =
  \bigl\{g \in G \mid \exists \tau \in \IR \;\text{with}\; 
             gc(t) = c(t + \tau)  \;\text{for all} \; t \in \IR\bigr\}
  \end{multline*}
  and 
  $$G_{c(\IR)} := \bigl\{g \in G \mid g \cdot c(\IR) = c(\IR)\}.$$
  Then $G_{\Phi_{\IR}(c)}$ is virtually cyclic of type I and
  $G_{c(\IR)}$ is virtually cyclic.
\end{lemma}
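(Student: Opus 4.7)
The plan is to extract from the action on $c(\IR)$ a homomorphism to $\IR$ whose kernel is forced to be finite and whose image is forced to be discrete by the properness of the $G$-action on $X$, and then to exhibit $G_{\Phi_\IR(c)}$ as a subgroup of index at most two in $G_{c(\IR)}$.

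First, for each $g \in G_{\Phi_\IR(c)}$ the number $\tau(g) \in \IR$ with $gc(t) = c(t + \tau(g))$ for all $t$ is unique, because $c$ is a non-constant isometric embedding. A short computation using this description shows that $\tau \colon G_{\Phi_\IR(c)} \to \IR$ is a group homomorphism.

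Next I would show $\ker \tau$ is finite and $\im \tau$ is discrete, both using properness. An element $g \in \ker \tau$ fixes each point of the compact set $c([0,1])$, so $\ker \tau \subseteq \{g \in G \mid g \cdot c([0,1]) \cap c([0,1]) \neq \emptyset\}$, which is finite by properness of the $G$-action on $X$. For discreteness, suppose there were pairwise distinct $g_n \in G_{\Phi_\IR(c)}$ with $\tau(g_n) \to 0$; then $g_n \cdot c(0) = c(\tau(g_n))$ lies eventually in the compact ball $L := \overline{B}_1(c(0))$, so $g_n L \cap L \neq \emptyset$ for infinitely many $n$, contradicting properness. Hence $\im \tau$ is a discrete subgroup of $\IR$, so either trivial or infinite cyclic. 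Combined with finiteness of $\ker \tau$, this gives a short exact sequence $1 \to \ker \tau \to G_{\Phi_\IR(c)} \to \im \tau \to 1$ that exhibits $G_{\Phi_\IR(c)}$ as finite or as a finite-by-$\IZ$ extension; in either case it is virtually cyclic of type~I.

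For $G_{c(\IR)}$, I would observe that if $g \cdot c(\IR) = c(\IR)$, then $t \mapsto gc(t)$ is an isometric parametrization of the geodesic line $c(\IR)$, hence has the form $c(\pm t + \tau)$ for some $\tau$. Those with sign $+$ form exactly $G_{\Phi_\IR(c)}$, which is therefore of index at most two in $G_{c(\IR)}$. Since being virtually cyclic is preserved under finite overgroups, $G_{c(\IR)}$ is virtually cyclic. The only real content is the first statement; the rest is a formal consequence. I expect the main obstacle to be the clean use of properness to obtain discreteness of $\im \tau$, but this is handled by the compact ball argument above.
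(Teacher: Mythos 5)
Your proof is correct and follows essentially the same approach as the paper: exploit the proper isometric action on the line $c(\IR) \cong \IR$ to get a homomorphism into $\Isom(\IR)$ with finite kernel and discrete image. The only superficial difference is that you handle $G_{\Phi_\IR(c)}$ first (mapping to the translation subgroup $\IR$) and deduce the claim for $G_{c(\IR)}$ from the index-two observation, while the paper treats $G_{c(\IR)}$ first and then specializes to translations for the type-I statement; both orders are fine and the content is identical.
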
    

\begin{proof}
  The group $G_{c(\IR)}$ acts properly and isometrically on $\IR$ since
  $c(\IR)$ is isometric to $\IR$. 
  The isometry group of $\IR$ fits into the exact sequence
  $1 \to \IR \xrightarrow{i} \Isom(\IR) \xrightarrow{p}  \{\pm 1\}\to 1$, 
  where $i$ sends a real number $r$ to the isometry $t \mapsto t + r$ and
  $p$ sends an isometry to $1$ if it is strictly monotone increasing 
  and to $-1$ otherwise.
  Since $G_{c(\IR)}$ acts properly on $\IR$, the obvious homomorphism
  $G \to \Isom(\IR)$ has a finite kernel and its image is a 
  discrete subgroup of $\Isom(\IR)$. 
  This implies that $G_{c(\IR)}$ is virtually cyclic.  

  Since the action of $G_{\Phi_{\IR}(c)}$ on $c$ is by translations, the 
  same argument shows that $G_{\Phi_{\IR}(c)}$ is virtually cyclic of type I.
\end{proof}


\subsection{Axes in the flow space}
\label{subsec:axis-in-FS}

\begin{notation}
  \label{not:A_gamma}
  Let $\gamma > 0$. Let $K$ be the compact subset
      from Convention~\ref{con:(X,G,K)}.
  \begin{enumerate}
  \item 
      Let 
      $$G^{\hyp}_{\leq \gamma} \subseteq G$$
      be the set
      of all hyperbolic $g \in G$ of translation length $l(g) \leq \gamma$
      such that some axis $c$ for $g$
      intersects $G \cdot K$. 
      
      Consider the equivalence relation $\sim$ on $G^{\hyp}_{\leq \gamma}$
      for which $g \sim g'$ if and only if there exists parallel axes
      $c_g$ and $c_{g'}$ for $g$ and $g'$.
      (This relation is transitive by 
      Lemma~\ref{lem:basic_prop_l_plus_Min}~\ref{lem:basic_prop_l_plus_Min:union_of_axis}.)
      Put
      $$A_{\leq \gamma} := G^{\hyp}_{\leq \gamma} / \sim.$$
      The conjugation action of $G$ on $G$ restricts to
      an action on $G^{\hyp}_{\leq \gamma}$ and
      descends to an action of $G$ on $A_{\leq \gamma}$, see 
      Lemma~\ref{lem:basic_prop_l_plus_Min}~\ref{lem:basic_prop_l_plus_Min:conjugation}.
      For $a \in A_{\leq \gamma}$ we set 
      $$G_a := \{ g \in G \mid g \cdot a = a \}.$$  
  \item 
      For $a \in A_{\leq \gamma}$ let 
      $$\FS_a \subseteq \FS(X)$$ 
      denote the subspace of
      $\FS(X)$ that consists of all  geodesics 
      $c \colon \IR \to X$, that are an axis for some $g \in a$
      and intersect $G \cdot K$.
      We remark that $c \in \FS(X)$ is an axis for $g$ if and only
      if $\Phi_\tau(c) = gc$ for some $\tau > 0$ and in this case
      $\tau = l(g)$, see 
      Lemma~\ref{lem:basic_prop_l_plus_Min}~\ref{lem:basic_prop_l_plus_Min:hyperbolic}.
      Define 
      $$p_a \colon \FS_a \to X, \quad c \mapsto c(0).$$
      We denote by 
      $$Y_a := \FS_a/\Phi$$ 
      the quotient
      of $\FS_a$ by the action of the flow $\Phi$. Let
      $$q_a \colon \FS_a \to Y_a$$ 
      be the canonical projection.
      The action of $G$ on $\FS(X)$ restricts to an action of
      $G_a$ on $\FS_a$.
      Because $p_a$ is $G_a$-equivariant and because the flow $\Phi$
      commutes with the $G$-action on $\FS(X)$ we obtain an action
      of $G_a$ on $Y_a$ and $q_a$ is $G_a$-equivariant for this action.  
  \end{enumerate}
\end{notation}

\begin{lemma}
  \label{lem:proper-action-constant-g_n}
  Let $(Z,d_Z)$ be a proper metric space 
  with a proper isometric action of a group $H$.
  If $(z_n)_{n \in \IN}$ and $(h_n)_{n \in \IN}$ are sequences
  in $Z$ and $H$ such that $z_n \to z$ and $h_nz_n \to z'$
  converge in $Z$, then $\{h_n \mid n \in \IN \}$ is finite
  and for every $h \in H$ such that $h_n = h$ for infinitely many $n$
  we have $hz = z'$.
\end{lemma}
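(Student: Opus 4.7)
The plan is to exploit the properness of the action via a single compact set that captures both sequences. First I would form the set
\[
K := \{z\} \cup \{z_n \mid n \in \IN\} \cup \{z'\} \cup \{h_n z_n \mid n \in \IN\}.
\]
Since $z_n \to z$ and $h_n z_n \to z'$, each convergent sequence together with its limit is compact, and a finite union of compact sets is compact, so $K$ is compact. Moreover $z_n \in K$ and $h_n z_n \in h_n K$, so $h_n K \cap K \ni h_n z_n$, giving $h_n K \cap K \neq \emptyset$ for every $n$.

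Next I would invoke properness of the action on the proper (hence locally compact) space $Z$: for compact $K \subseteq Z$ the set $\{h \in H \mid hK \cap K \neq \emptyset\}$ is finite. Hence $\{h_n \mid n \in \IN\}$ is contained in this finite set, which proves the first assertion.

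For the second assertion, suppose $h \in H$ satisfies $h_n = h$ for infinitely many $n$, and restrict to a subsequence (still indexed by $n$ for notational simplicity) along which $h_n = h$. On this subsequence $h z_n = h_n z_n \to z'$. On the other hand, $z_n \to z$ and the action by $h$ is continuous (in fact an isometry), so $h z_n \to hz$. By uniqueness of limits in the metric space $Z$ we conclude $hz = z'$.

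There is no real obstacle here; the only thing to get right is the choice of the compact set $K$ so that the properness hypothesis can be applied to a single $K$ simultaneously for all $n$, rather than one $K_n$ per index.
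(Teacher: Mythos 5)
Your proof is correct and follows essentially the same strategy as the paper: trap the elements $h_n$ using the compact-set characterization of a proper action, then conclude $hz=z'$ by passing to the constant subsequence and using continuity. The only cosmetic difference is the choice of compact set — you take the union of the two convergent sequences with their limits, whereas the paper normalizes by $h_{n_0}^{-1}$ and works inside a closed ball $\overline{B}_4(z)$ — but these are interchangeable realizations of the same idea.
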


\begin{proof}
  Let $n_0 > 0$ such that $d_Z(z,z_n) < 1$ and $d_Z(z',h_nz_n) < 1$
  for all $n \geq n_0$.
  Thus $d_Z(h_nz,z') < 2$ for all $n \geq n_0$.
  Thus $d_Z((h_{n_0})^{-1}h_n z,z) < 4$ for all $n \geq n_0$.
  Thus $\{ (h_{n_0})^{-1}h_n \mid n \geq n_0 \}$ is finite,
  because the action is proper.
  Therefore $\{h_n \mid n \in \IN \}$ is finite.  
  If $h_n = h$ for infinitely many $n \in \IN$, then
  $hz = \lim_{n \to \infty} hz_n = 
             \lim_{n \to \infty} h_nz_n = z'$.
\end{proof}

\begin{corollary}
  \label{cor:G-cdot-L-closed}
  Let $(Z,d_Z)$ be a proper metric space 
  with a proper isometric action of a group $H$.
  If $L \subset Z$ is compact, then $H \cdot L \subset Z$
  is closed. 
\end{corollary}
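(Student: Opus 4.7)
The plan is to show sequential closedness, which suffices because $Z$ is a metric (hence first-countable) space. So I will take an arbitrary sequence $(y_n)_{n \in \IN}$ in $H \cdot L$ with $y_n \to y$ in $Z$ and verify $y \in H \cdot L$.

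Write each $y_n = h_n \cdot \ell_n$ with $h_n \in H$ and $\ell_n \in L$. Since $L$ is compact, I can pass to a subsequence (without renaming) so that $\ell_n \to \ell$ for some $\ell \in L$. Now I am exactly in the situation of Lemma~\ref{lem:proper-action-constant-g_n}: the sequences $z_n := \ell_n$ and $h_n z_n = y_n$ both converge in $Z$ (to $\ell$ and $y$ respectively). Applying that lemma, the set $\{h_n \mid n \in \IN\}$ is finite, so some element $h \in H$ satisfies $h_n = h$ for infinitely many $n$, and for such $h$ the lemma gives $h \cdot \ell = y$. Thus $y = h \cdot \ell \in H \cdot L$, proving that $H \cdot L$ is closed.

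Since the argument is an almost immediate application of the preceding lemma, there is no real obstacle; the only thing to be careful about is invoking first-countability (equivalently, the metrizability of $Z$) to reduce closedness to sequential closedness, and making sure the compactness of $L$ is used to extract the convergent subsequence $\ell_n \to \ell \in L$ before applying Lemma~\ref{lem:proper-action-constant-g_n}.
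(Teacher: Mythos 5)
Your proof is correct and follows the same route as the paper: pass to a subsequence so that $\ell_n \to \ell \in L$ by compactness, then invoke Lemma~\ref{lem:proper-action-constant-g_n} to extract a constant subsequence of the $h_n$ and conclude $y = h\cdot\ell \in H\cdot L$. The only cosmetic difference is that you explicitly mention the reduction to sequential closedness via first-countability, which the paper leaves implicit.
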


\begin{proof}
  Let $h_n z_n \to z$ with $h_n \in H$ and $z_n \in L$.
  After passing to a subsequence we have $z_n \to z'$.
  Lemma~\ref{lem:proper-action-constant-g_n} implies that we
  can pass to a further subsequence for which $h_n = h$ is constant.
  Thus $z \in h\cdot L \subset H \cdot L$. 
\end{proof}

\begin{lemma}
  \label{lem:K_gamma}
  There is a compact subset $K_\gamma \subseteq X$ such that
  $c(0) \in G \cdot K_\gamma$ for all $c \in \FS_a$, $a \in A_{\leq \gamma}$.  
\end{lemma}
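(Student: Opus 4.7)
The plan is to set $K_\gamma := \overline{B}_{\gamma/2}(K)$ and to use the axis property to translate any $c \in \FS_a$ so that $c(0)$ lies in a uniformly bounded neighborhood of $G \cdot K$.

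Given $c \in \FS_a$ for some $a \in A_{\leq \gamma}$, by definition $c$ is an axis for some hyperbolic $g \in a$ with $l(g) \leq \gamma$ and $c(\IR) \cap G \cdot K \neq \emptyset$. Choose $t_0 \in \IR$ with $c(t_0) \in G \cdot K$. Since $c$ is an axis for $g$ we have $g^n c(t_0) = c(t_0 + n l(g))$ for every $n \in \IZ$ by Lemma~\ref{lem:basic_prop_l_plus_Min}~\ref{lem:basic_prop_l_plus_Min:hyperbolic}. Because $l(g) > 0$, I can pick $n \in \IZ$ such that $t_1 := t_0 + n l(g) \in [-l(g)/2, l(g)/2]$. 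Then $c(t_1) = g^n c(t_0) \in G \cdot K$, and since $c$ is parametrized by arclength, $d_X(c(0), c(t_1)) = |t_1| \leq l(g)/2 \leq \gamma/2$.

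Write $c(t_1) = h \cdot k$ with $h \in G$, $k \in K$. Since $G$ acts by isometries,
\[
d_X(h^{-1} c(0), k) = d_X(c(0), c(t_1)) \leq \gamma/2,
\]
so $h^{-1} c(0) \in \overline{B}_{\gamma/2}(k) \subseteq \overline{B}_{\gamma/2}(K) = K_\gamma$, and therefore $c(0) \in h \cdot K_\gamma \subseteq G \cdot K_\gamma$.

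It remains to verify that $K_\gamma$ is compact. Since $K$ is compact and $X$ is proper, $K$ is bounded, say $K \subseteq \overline{B}_R(x_0)$ for some $x_0 \in X$ and $R > 0$. Then $K_\gamma \subseteq \overline{B}_{R + \gamma/2}(x_0)$, which is compact by properness of $X$, and $K_\gamma$ is closed, hence compact. This step is entirely routine; the only non-formal ingredient is the axis translation, which makes the main obstacle (bounding $|t_1|$) essentially trivial since $l(g) \leq \gamma$.
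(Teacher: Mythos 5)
Your proof is correct and takes essentially the same approach as the paper: both use the axis property $g^n c(t) = c(t + n\,l(g))$ to shift the point where $c$ meets $G\cdot K$ into a bounded parameter interval around $0$, then set $K_\gamma$ to be a closed neighborhood of $K$. The only (immaterial) difference is that you translate into $[-l(g)/2, l(g)/2]$ rather than $[0,\gamma]$, giving the slightly smaller $K_\gamma = \overline{B}_{\gamma/2}(K)$ instead of $\overline{B}_\gamma(K)$.
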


\begin{proof}
  Set $K_\gamma := \overline{B}_\gamma(K)$.
  This is compact because $K$ is compact and $X$ is proper as
  a metric space.
  For $a \in A_{\leq \gamma}$, $c \in \FS_a$ there are $g \in a$,
  $t \in [0,\gamma]$ such that $c(s + t) = g c(s)$ for all $s \in \IR$.
  Because $c$ intersects $G \cdot K$, this implies that there is
  $s_0 \in [0,\gamma]$ such that $c(s_0) \in G \cdot K$.
  Thus $c(0) \in G \cdot \overline{B}_\gamma(K) = G \cdot K_\gamma$.  
\end{proof}

\begin{lemma}
  \label{lem:seq-in-FS}
  Let $\gamma > 0$.
  Let $(c_n)_{n \in \IN}$ be a sequence in 
  $\bigcup_{a \in A_{\leq \gamma}} \FS_a$ that converges to $c$ in 
  $\FS(X)$.
  Then there are $g \in G^{\hyp}_{\leq \gamma}$ and
  an infinite subset $I \subseteq \IN$ such that
  $c$ and all $c_i$, $i \in I$ are axes for $g$ and 
  intersect $G \cdot K$, where $K$ is the compact subset
  from Convention~\ref{con:(X,G,K)}.
  In particular, $c \in \FS_a$ and $c_{i} \in \FS_a$ for all $i \in I$, 
  where $a \in A_{\leq \gamma}$ is the class of $g$.
\end{lemma}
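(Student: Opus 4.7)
The plan is to extract a subsequence along which a single hyperbolic element $g$ serves as the translation for all $c_n$, then check that $c$ is a full bi-infinite geodesic and an axis for this $g$.

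First I would apply Lemma~\ref{lem:K_gamma} to obtain a compact set $K_\gamma \subseteq X$ with $c_n(0) \in G \cdot K_\gamma$, and write $c_n(0) = h_n k_n$ with $h_n \in G$ and $k_n \in K_\gamma$. Compactness of $K_\gamma$ allows me to pass to a subsequence on which $k_n \to k \in K_\gamma$; since also $c_n(0) \to c(0)$ (evaluation at $0$ is continuous by Lemma~\ref{lem:comparing_d_FS-d_X}), Lemma~\ref{lem:proper-action-constant-g_n} forces $\{h_n\}$ to be finite, and thinning further I may assume $h_n$ is constant. For each $n$ there is a hyperbolic $g_n \in G$ of which $c_n$ is an axis, so $g_n c_n(t) = c_n(t + \tau_n)$ with $\tau_n := l(g_n) \in (0,\gamma]$. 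After a further passage to a subsequence, $\tau_n \to \tau \in [0,\gamma]$, and uniform convergence on compact subsets (Proposition~\ref{prop:uniform-convergence-on-compact}) gives $c_n(\tau_n) \to c(\tau)$. Thus $g_n \cdot c_n(0) \to c(\tau)$ while $c_n(0) \to c(0)$, and a second application of Lemma~\ref{lem:proper-action-constant-g_n} shows $\{g_n\}$ is finite; I then pass to the infinite subset $I \subseteq \IN$ on which $g_n = g$ is a fixed element, after which $\tau_n = l(g)$ is constant along $I$.

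Taking the limit in $g c_n(t) = c_n(t + l(g))$ for each fixed $t$ yields the functional equation $g c(t) = c(t + l(g))$. Since $g$ is hyperbolic it has no fixed point, so $c$ cannot be constant; moreover, if one had $c_+ < \infty$, then $c$ would be constant on $[c_+,\infty)$ and evaluating the equation at $t = c_+$ would give $g c(c_+) = c(c_+ + l(g)) = c(c_+)$, a contradiction, and symmetrically $c_- = -\infty$. Hence $c$ is a genuine bi-infinite geodesic, and the functional equation exhibits it as an axis for $g$ of translation length $l(g) \leq \gamma$.

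Finally, each $c_i$ with $i \in I$ meets $G \cdot K$ at some $c_i(s_i)$; because $c_i(s+l(g)) = g \cdot c_i(s) \in G \cdot K$, I may reduce $s_i$ modulo $l(g)$ to assume $s_i \in [0,l(g)]$ and then pass to a subsequence with $s_i \to s$, obtaining $c_i(s_i) \to c(s)$ by uniform convergence on compacta. Corollary~\ref{cor:G-cdot-L-closed} guarantees that $G \cdot K$ is closed, so $c(s) \in G \cdot K$. Consequently $g \in G^{\hyp}_{\leq \gamma}$, and if $a$ denotes the equivalence class of $g$ then $c$ and every $c_i$ with $i \in I$ lie in $\FS_a$. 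The main subtlety in this plan is ensuring that the limit $c$ does not degenerate into a constant or only half-infinite geodesic; this is secured precisely by combining the functional equation with the fixed-point-freeness of hyperbolic isometries.
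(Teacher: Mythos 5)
Your proof is correct and follows essentially the same approach as the paper: extract a constant translator $g_n = g$ along a subsequence via properness and Lemma~\ref{lem:proper-action-constant-g_n}, pass to the limit in the translation equation to obtain $gc = \Phi_{l(g)}(c)$, and conclude that $c$ meets $G \cdot K$ by closedness of $G \cdot K$ (Corollary~\ref{cor:G-cdot-L-closed}). Two minor remarks: your opening paragraph (the decomposition $c_n(0) = h_n k_n$ and extraction of a constant $h_n$) does no work in what follows and can be deleted outright---the only input you actually use is $c_n(0) \to c(0)$, which is just continuity of evaluation, and later the closedness of $G \cdot K$; and where the paper applies Lemma~\ref{lem:proper-action-constant-g_n} directly in $\FS(X)$ (with $c_n \to c$ and $g_n c_n \to \Phi_{t_0}(c)$), you apply it in $X$ with the evaluations at $0$, which works equally well, and you also make explicit the check that the limit $c$ is a genuine bi-infinite geodesic, a point the paper leaves implicit via the remark in Notation~\ref{not:A_gamma}.
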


\begin{proof}
  There are $g_n \in G$, $t_n \in [0,\gamma]$ 
  such that $g_nc_n = \Phi_{t_n}(c_n)$.
  We can pass to a subsequence and assume that $t_n \to t_0$.
  Then $g_nc_n = \Phi_{t_n}(c_n) \to \Phi_{t_0}(c)$ (using
  Lemma~\ref{lem:Phi_well-defined}).
  Since $\FS(X)$ is proper and $G$ acts properly on $\FS(X)$
  (see Propositions~\ref{prop:FS-is-proper} 
  and~\ref{prop:cocompact})
  we can apply Lemma~\ref{lem:proper-action-constant-g_n}
  and assume after passing to a further subsequence
  that $g_n = g$ is constant.
  Then $gc = \lim g_n c_n = \lim \Phi_{t_n} c_n = \Phi_{t_0}(c)$.
  It remains to show that $c$ intersects $G \cdot K$.
  
  For each $n$ there is $s_n$ in $\IR$ such that $c_n(s_n) \in G \cdot K$.
  Since $g_n c_n(s) = c_n(s + t_n)$ and  $t_n \in [0,\gamma]$, we can arrange
  $s_n \in [0,\gamma]$ for all $n \ge 0$. 
  By passing to a subsequence we can arrange 
  that $s_n \to s_0$ for $n \to \infty$ for some $s_0 \in [0,\gamma]$.
  Then $c_n(s_n) \to c(s_0)$ 
  (using Proposition~\ref{prop:uniform-convergence-on-compact}).
  By Corollary~\ref{cor:G-cdot-L-closed} $G \cdot K$ is closed.
  Thus $c(s) \in G \cdot K$. 
\end{proof}

\begin{lemma}
  \label{lem:properties-A}
  Let $\gamma > 0$. Then 
  \begin{enumerate}
  \item \label{lem:properties-A:cofinite}
      $G \backslash A_{\leq \gamma}$ is finite;
  \item \label{lem:properties-A:GFS_a-closed}
      $G \cdot \FS_a \subseteq \FS(X)$
      is closed for all $a \in A_{\leq \gamma}$; 
      there is $K_a \subseteq G \cdot \FS_a$ compact
      such that $G \cdot K_a = G \cdot \FS_a$; 
  \item \label{lem:properties-A:a-neq-b} 
      there is $\e > 0$ such that $d_X(\FS_a,\FS_b) > \e$ for 
      all $a \neq b \in A_{\leq \gamma}$;
  \item \label{lem:properties-A:cover}
      Consider $c \in \FS(X)_{\leq \gamma} - \FS(X)^\IR$ such
      that $c(t) \in K$ for some $t\in \IR$, 
      where $K$ is the compact subset
      from Convention~\ref{con:(X,G,K)}.
      Then there are $a \in A_{\leq \gamma}$ and $y \in Y_a$
      such that $\Phi_\IR(c) = q_a^{-1}(y)$.
  \end{enumerate}
\end{lemma}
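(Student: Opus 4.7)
My plan is to prove all four parts by exploiting Lemma~\ref{lem:K_gamma}, Lemma~\ref{lem:seq-in-FS}, and the properness of the $G$-action on $\FS(X)$ (from Proposition~\ref{prop:cocompact}) and on $X$, together with the properness of the evaluation map $c \mapsto c(0)$ (Lemma~\ref{lem:evaluation-is-proper}).

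For \ref{lem:properties-A:cofinite}, the key observation is that by Lemma~\ref{lem:K_gamma} every class $a \in A_{\leq \gamma}$ contains an axis $c$ with $c(0) \in K_\gamma$. Suppose for contradiction $G\backslash A_{\leq \gamma}$ is infinite; pick representatives $a_n$ in distinct $G$-orbits and axes $c_n \in \FS_{a_n}$ with $c_n(0) \in K_\gamma$. The preimage of $K_\gamma$ under the evaluation map is compact by Lemma~\ref{lem:evaluation-is-proper}, so after passing to a subsequence $c_n \to c$ in $\FS(X)$. Lemma~\ref{lem:seq-in-FS} then produces $a \in A_{\leq \gamma}$ with $c_n \in \FS_a$ for infinitely many $n$, contradicting that the $a_n$ are in distinct orbits.

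For \ref{lem:properties-A:GFS_a-closed}, I define $K_a := \{c \in \FS_a \mid c(0) \in K_\gamma\}$. By Lemma~\ref{lem:K_gamma} we have $G_a \cdot K_a = \FS_a$, hence $G \cdot K_a = G \cdot \FS_a$. To see $K_a$ is compact, note it lies in the compact set $\{c : c(0) \in K_\gamma\}$ (Lemma~\ref{lem:evaluation-is-proper}), and it is closed there: a limit $c$ of $c_n \in K_a$ satisfies $c(0) \in K_\gamma$ and, by Lemma~\ref{lem:seq-in-FS}, lies in some $\FS_{a'}$ with $c_n \in \FS_{a'}$ for infinitely many $n$; thus $a' = a$. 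Finally, $G \cdot \FS_a = G \cdot K_a$ is closed by Corollary~\ref{cor:G-cdot-L-closed}.

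For \ref{lem:properties-A:a-neq-b} (with $d_\FS$ replacing $d_X$), I argue by contradiction: suppose $c_n \in \FS_{a_n}$, $d_n \in \FS_{b_n}$ with $a_n \neq b_n$ and $d_\FS(c_n,d_n) \to 0$. Using the $G$-action I may assume $c_n(0) \in K_\gamma$, and then again by properness of evaluation at $0$, pass to a subsequence so that $c_n \to c$, whence $d_n \to c$ as well. Lemma~\ref{lem:seq-in-FS} applied to each sequence yields elements $g_1, g_2 \in G^{\hyp}_{\leq \gamma}$ both admitting $c$ as axis; since parallel axes exist (namely $c$ itself), $g_1$ and $g_2$ determine the same class, so infinitely many $a_n$ agree with infinitely many $b_n$, a contradiction.

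For \ref{lem:properties-A:cover}, the definition of $\FS(X)_{\leq \gamma}$ gives sequences $\tau_n \in (0, \gamma + 1/n]$ and $g_n \in G$ with $g_n c = \Phi_{\tau_n}(c)$. After extracting a convergent subsequence $\tau_n \to \tau_0 \in [0,\gamma]$, Lemma~\ref{lem:proper-action-constant-g_n} (applied to the proper $G$-action on $\FS(X)$) yields a fixed $g \in G$ with $g_n = g$ infinitely often and $gc = \Phi_{\tau_0}(c)$; since $c \notin \FS(X)^\IR$ the flow is injective on $c$, forcing $\tau_n = \tau_0$ whenever $g_n = g$ and hence $\tau_0 \in (0, \gamma]$. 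Comparing $(gc)_\pm = c_\pm$ with $(\Phi_{\tau_0}(c))_\pm = c_\pm - \tau_0$ forces $c_\pm = \pm\infty$, so $c$ is a genuine geodesic and an axis for the hyperbolic element $g$ with $l(g) = \tau_0 \leq \gamma$. Since $c(t) \in K \subseteq G\cdot K$, we have $g \in G^{\hyp}_{\leq \gamma}$; letting $a$ be its class, the whole orbit $\Phi_\IR(c)$ consists of axes for $g$ that meet $G \cdot K$, so $\Phi_\IR(c) \subseteq \FS_a$ and therefore $\Phi_\IR(c) = q_a^{-1}(q_a(c))$.

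The main subtle point is part~\ref{lem:properties-A:cover}: ensuring that the $\tau_n$ actually stabilize to a positive value (rather than collapsing to zero, which would prevent $c$ from being an axis). The injectivity of the flow on non-constant generalized geodesics, combined with properness to make $g_n$ eventually constant, is the crucial input.
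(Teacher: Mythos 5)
Parts (i), (iii), and (iv) are correct and follow essentially the paper's own route (with (iv) slightly more explicit about why $c_\pm = \pm\infty$, which is a welcome addition). The gap is in part (ii), in the choice of $K_a$.

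You define $K_a := \{c \in \FS_a \mid c(0) \in K_\gamma\}$, i.e.\ $\widehat{K}_\gamma \cap \FS_a$ where $\widehat{K}_\gamma := \{c \in \FS(X) \mid c(0) \in K_\gamma\}$, and claim that Lemma~\ref{lem:K_gamma} gives $G_a \cdot K_a = \FS_a$. But Lemma~\ref{lem:K_gamma} only provides, for $c \in \FS_a$, some $g \in G$ with $g^{-1}c(0) \in K_\gamma$, and this $g$ need not lie in $G_a$. If $g \notin G_a$ then $g^{-1}c \in \FS_{g^{-1}a}$ rather than $\FS_a$, hence $g^{-1}c \notin K_a$. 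In fact your $K_a$ can be empty: the set $\{c(0) \mid c \in \FS_a\}$ (the union of the axis images) meets $G \cdot K_\gamma$ but need not meet $K_\gamma$ itself. So neither $G_a \cdot K_a = \FS_a$ nor even $G \cdot K_a = G \cdot \FS_a$ follows. (What you are implicitly asserting, namely that $G_a$ acts cocompactly on $\FS_a$, is true but is derived in the paper only downstream of Lemma~\ref{lem:properties-A}, by combining~\ref{lem:properties-A:GFS_a-closed} with the separation statement~\ref{lem:properties-A:a-neq-b} in the proof of Lemma~\ref{lem:dim(Ga_backslash_Y_a_le_dim(X)}.) The same gap undermines your deduction that $G \cdot \FS_a$ is closed via Corollary~\ref{cor:G-cdot-L-closed}, since that route relies on the unproven identity $G \cdot K_a = G \cdot \FS_a$.

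The fix is the paper's choice $K_a := \widehat{K}_\gamma \cap G \cdot \FS_a$, a compact subset of $G \cdot \FS_a$ rather than of $\FS_a$. Closedness of $G \cdot \FS_a$ is argued directly from Lemma~\ref{lem:seq-in-FS} (a sequence in $G \cdot \FS_a$ converging in $\FS(X)$ lands in some $\FS_{a'}$ with $a'$ in the $G$-orbit of $a$); then $K_a$ is closed in the compact set $\widehat{K}_\gamma$, hence compact, and
\[
G \cdot K_a \;=\; G \cdot \widehat{K}_\gamma \cap G \cdot \FS_a \;=\; G \cdot \FS_a,
\]
the last equality because Lemma~\ref{lem:K_gamma} gives $\FS_a \subseteq G \cdot \widehat{K}_\gamma$.
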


\begin{proof}~\ref{lem:properties-A:cofinite} 
  We proceed by contradiction and assume that there
  are $a_1,a_2,\ldots$ in $A_{\leq \gamma}$ such that
  $Ga_i \cap Ga_j = \emptyset$ if $i \neq j$.
  Then there are $g_i \in a_i$ and $c_i \in \FS(X)_{a_i}$ such
  that $c_i$ is an axis for $g_i$.
  After replacing $a_i$ by $h_ia_i$, $g_i$ by $h_ig_ih_i^{-1}$ and 
  $c_i$ by $h_i c_i$ for suitable $h_i \in G$ we can assume that
  $c_i(0) \in K_\gamma$, where $K_\gamma$ is the compact subset of $X$
  from Lemma~\ref{lem:K_gamma}.
  Lemma~\ref{lem:evaluation-is-proper} implies that
  $\hat K_\gamma := \{ c \in \FS(X) \mid c(0) \in K_\gamma \}$
  is also compact.
  Thus we can pass to a subsequence and arrange that 
  $c_i \to c \in \FS(X)$.
  Lemma~\ref{lem:seq-in-FS} yields a contradiction.
  \\[1ex]~\ref{lem:properties-A:GFS_a-closed}
  By Lemma~\ref{lem:seq-in-FS} $G \cdot \FS_a \subseteq \FS(X)$ is
  closed.
  Thus $K_a := \hat K_\gamma \cap G \cdot \FS_a$ is compact. 
  Now we get $G \cdot \FS_a = G \cdot \hat K_\gamma \cap G \cdot \FS_a
       = G \cdot (\hat K_\gamma \cap G \cdot \FS_a)
       = G \cdot K_a$ using Lemma~\ref{lem:K_gamma}.
  \\[1ex]~\ref{lem:properties-A:a-neq-b}
  We proceed by contradiction and assume that for
  every $n$, there are $a_n \neq b_n \in A_{\leq \gamma}$
  and $c_n \in \FS_{a_n}$, $d_n \in \FS_{b_n}$ such that
  $d_\FS(c_n,d_n) < 1/n$.
  Because of~\ref{lem:properties-A:GFS_a-closed}
  there are $h_n \in G$ such that a subsequence of 
  $h_n c_n$ converges to some $c \in \FS(X)$.
  After replacing $a_n$ by $h_n a_n$, $b_n$ by $h_n b_n$,
  $c_n$ by $h_n c_n$, $d_n$ by $h_n d_n$ and after passing
  to a suitable subsequence we can thus assume that
  $c_n \to c \in \FS(X)$. 
  Then also $d_n \to c$.
  Using Lemma~\ref{lem:seq-in-FS} for the sequence $(c_n)_{n \in \IN}$
  we can after passing to a subsequence assume that $a_n = a$ is constant
  and that $c \in \FS_a$.
  Using Lemma~\ref{lem:seq-in-FS} for the sequence $(d_n)_{n \in \IN}$
  we can after passing to a further subsequence assume that
  $b_n = b$ is constant and $c \in \FS_b$.
  Thus $\FS_b \cap \FS_a \neq \emptyset$.
  From the definition of $A_{\leq \gamma}$ it is immediate
  that this implies $a = b$, contradicting $a_n \neq b_n$.  
  \\[1ex]~\ref{lem:properties-A:cover}
  Let $c \in \FS(X)_{\leq \gamma} - \FS(X)^\IR$.
  Then we can find for each natural number $n$ a real number $\tau_n$ and $g_n \in G$ 
  satisfying $\gamma + \frac{1}{n} > \tau_n > 0$ and $g_n c(t) = c(t + \tau_n)$.
  In particular, $c$ is a geodesic that is an axis for each $g_n$
  and $l({g_n}) = \tau_n$.
  After passing to a subsequence we can assume that $\tau_n \to \tau_0$.
  Thus, $d_X(g_n c(0), c(\tau_0) ) \to 0$.
  Because of Lemma~\ref{lem:proper-action-constant-g_n}
  $l(g_n) = \tau_n = \tau_0$ for infinitely many $n$.
  For such an $n$ we have 
  $g_n \in G^{\hyp}_{\leq \gamma}$. Now assume additionally that
  $c(t) \in K$ for some $t$.
  If $a$ is the equivalence class of such a $g_n$,
  then $c \in \FS_a$ and $\Phi_\IR(c) = q_a^{-1}(q_a(c))$.
\end{proof}

\begin{proposition}
  \label{prop:Y_a}
  Let $\gamma > 0$ and $a \in A_{\leq \gamma}$.
  Then
  \begin{enumerate}
  \item \label{prop:Y_a:closed_image}
      $p_a \colon \FS_a \to X$ is an isometric embedding
      with closed image;
  \item \label{prop:Y_a:metric}
      there is a $G_a$-invariant metric $d_a$ on $Y_a$ that
      generates the topology; with this metric
      $Y_a$ is a proper metric space;
  \item \label{prop:Y_a:product}
      there is $\tau_a \colon \FS_a \to \IR$, such that
      $c \mapsto (q_a(c),\tau_a(c))$ defines an
      isometry $\FS_a \to Y_a \x \IR$ 
      which is compatible with the flow, i.e.,
      $\tau_a(\Phi_t(c)) = \tau_a(c) + t$
      for $t \in \IR$, $c \in \FS_a$;
  \item \label{prop:Y_a:VCyc}
      for $y \in Y_a$, $G_y := \{ g \in G_a \mid gy=y \}$
      is virtually cyclic of type I and $G_a y \subseteq Y_a$
      is discrete;
  \end{enumerate}
\end{proposition}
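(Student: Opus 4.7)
The overarching idea is that the set $\FS_a$ consists of mutually parallel parametrized geodesics. I would first establish that for $c, c' \in \FS_a$ the function $t \mapsto d_X(c(t), c'(t))$ is constant: if $c$ is an axis for $g \in a$ and $c'$ for $g' \in a$, then by the definition of $\sim$ there are axes $d$ of $g$ and $d'$ of $g'$ with $d_X(d(t), d'(t))$ constant, while Lemma~\ref{lem:basic_prop_l_plus_Min}~\ref{lem:basic_prop_l_plus_Min:union_of_axis} gives the same for $c, d$ and for $c', d'$, so $c \parallel d \parallel d' \parallel c'$. Assertion (i) then follows immediately: the integral defining $d_\FS(c,c')$ collapses to $d_X(c(0),c'(0))$, and closedness of $p_a(\FS_a)$ in $X$ follows by completeness, since $\FS_a$ is closed in $\FS(X)$ (via Lemma~\ref{lem:seq-in-FS} combined with Lemma~\ref{lem:properties-A}~\ref{lem:properties-A:a-neq-b}), hence complete, so its isometric image is complete and thus closed.

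For (iii) I would fix a reference $c_0 \in \FS_a$ (the empty case being vacuous) and use the standard result of Bridson--Haefliger that the parallel set $P(c_0) \subseteq X$ splits canonically as $Y' \x \IR$ with $Y'$ a complete $\CAT(0)$-space, the $\IR$-direction pointing along $c_0$. The image of $p_a$ lies in $P(c_0)$ by the parallelism above, and each $c \in \FS_a$ corresponds to a geodesic $\{y(c)\} \x \IR \subseteq Y' \x \IR$ traversed forward (orientation is preserved because every element of $\FS_a$ is the forward parametrization of an axis of a hyperbolic element in a single equivalence class). Setting $Y_a := \{y(c) \mid c \in \FS_a\}$, identifying $q_a(c)$ with $y(c)$, and letting $\tau_a(c)$ be the $\IR$-coordinate of $p_a(c)$ delivers the isometry $\FS_a \cong Y_a \x \IR$, with the flow acting as translation in the $\IR$-factor by construction.

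For (ii), I take $d_a$ to be the restriction of $d_{Y'}$ to $Y_a$, so that the isometry of (iii) realises $d_\FS$ as the Euclidean product metric on $Y_a \x \IR$. The group $G_a$ permutes the axes of elements of $a$ preserving their orientation, hence acts isometrically on $P(c_0)$ preserving the oriented $\IR$-foliation. A short direct computation (matching the quadratic forms as the $\IR$-parameters vary) shows that any such isometry of $Y' \x \IR$ has the form $(y',\tau) \mapsto (\phi(y'), \tau + s)$ with $\phi \in \Isom(Y')$ and $s \in \IR$ independent of $y'$; this yields an isometric $G_a$-action on $Y_a$, so $d_a$ is $G_a$-invariant and generates the quotient topology on $Y_a = \FS_a/\Phi_\IR$. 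Properness of $Y_a$ follows because the closedness of $\FS_a$ in $\FS(X)$ transports under the isometry to closedness of $Y_a \x \IR$ in $Y' \x \IR$, whence $Y_a$ is closed in the proper metric space $Y'$.

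For (iv), pick any lift $c \in \FS_a$ of $y$. Then $g \in G_y$ iff $g$ preserves the flow-orbit $\Phi_\IR(c) = q_a^{-1}(y)$; a short argument using that two hyperbolic isometries sharing an axis lie in the same $\sim$-class shows $G_{\Phi_\IR(c)} \subseteq G_a$, so $G_y = G_{\Phi_\IR(c)}$, which is virtually cyclic of type I by Lemma~\ref{lem:type-I-VCyc}. For discreteness of $G_a y$, choose a hyperbolic generator $h$ of a finite-index infinite cyclic subgroup of $G_y$, translating the axis by some $\mu > 0$; given $g_n \in G_a$ with $g_n y \to y$, write the $g_n$-action on $P(c_0)$ as $(\phi_n, s_n)$ in the product form of (ii) and set $g_n' := g_n h^{-k_n}$ for integers $k_n$ chosen so that $s_n - k_n \mu \in [0, \mu)$. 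Since $h y = y$ this does not change the $G_a$-orbit of $y$, while both $\phi_n(y) \to y$ in $Y'$ (using $\phi_h(y) = y$) and the $\IR$-translation become bounded, so the displacements $d_X(g'_n c(0), c(0))$ are bounded; properness of the $G$-action on $X$ then forces $\{g'_n\}$ to be finite, which implies $g_n y$ is eventually equal to $y$. I expect the principal technical point to be the product decomposition of the $G_a$-action with $\IR$-translation independent of the base point; this is an elementary but non-obvious consequence of the isometry condition on a metric product, and once it is cleanly in place, parts (iii), (ii) and the discreteness argument in (iv) fall out as above.
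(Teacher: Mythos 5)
Your proposal is correct and follows essentially the same route as the paper. You establish parallelism of all elements of $\FS_a$ and deduce $p_a$ is an isometric embedding; you invoke the Bridson--Haefliger parallel-set splitting $P(c_0) \cong Y' \times \IR$ exactly as the paper cites [II.2.14]; your ``short direct computation'' showing that an isometry of $Y' \times \IR$ respecting the oriented $\IR$-foliation decomposes as $(\phi,\tau)\mapsto(\phi(y'),\tau+s)$ with $s$ constant is precisely what the paper extracts from [I.5.3(4)]; and your discreteness argument in (iv) -- normalizing $g_n$ by powers of a hyperbolic element fixing $\Phi_\IR(c)$ to bound the $\IR$-translation, then invoking properness -- is the same mechanism as the paper's replacement of $g_n$ by $(g_n g g_n^{-1})^{l_n} g_n = g_n g^{l_n}$. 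Minor presentational differences (completeness rather than properness of evaluation for closedness of $p_a(\FS_a)$; choosing $h$ in $G_y$ rather than in $a$; phrasing the conclusion of (iv) as ``eventually equal'' rather than ``equal for infinitely many $n$'') do not affect correctness, and the ``principal technical point'' you flag is the content of [BH I.5.3(4)] which the paper simply cites.
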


\begin{proof}~\ref{prop:Y_a:closed_image}
  If $c$ and $d$ are parallel, then $d_X(c(t),d(t))$ is constant 
  by definition. 
  An easy computation shows $d_X(c(0),d(0)) = d_\FS(c,d)$.
  Thus $p_a$ is an isometry.
  It remains to show that $p_a(\FS_a)$ is closed.
  Let $c_n \in \FS_a$ such that $c_n(0) \to x \in X$.
  Because $c \mapsto c(0)$ is proper 
  (Lemma~\ref{lem:evaluation-is-proper})
  we can pass to a subsequence and assume that $c_n \to c$
  in $\FS(X)$.
  Then $c \in \FS_a$ by Lemma~\ref{lem:seq-in-FS}
  and $c(0) = x$.  
  \\[1ex]\ref{prop:Y_a:metric} and~\ref{prop:Y_a:product} 
  Let $\FS_a^+$ be the subset of all $d$ that are parallel to
  some (and therefore all) $c \in \FS_a$.
  Define 
  $$p_a^+ \colon \FS_a^+ \to X, \quad c \mapsto c(0).$$
  By the argument from the proof of assertion~\ref{prop:Y_a:closed_image}
  $p_a^+ \colon \FS_a^+ \to X$ is an isometric embedding.  
  It follows from~\cite[II.2.14 on p.183]{Bridson-Haefliger(1999)}
  that there is convex subspace $Y^+_a$ of $X$
  (which is therefore a $\CAT(0)$-space)
  and an isometry $\psi \colon Y_a^+ \x \IR \to \FS_a^+$
  such that $\Phi_t(\psi(y,s)) = \psi(y,s+t)$ for
  $s,t \in \IR$ and $y \in Y_a^+$.    
  This identifies $Y_a$ with a subspace of $Y_a^+$
  and provides the metric on $Y_a$.
  If $g \in G_a$, then the action of $g$ on $X$ permutes 
  the images of geodesics $c \in \FS_a^+$.
  It follows from~\cite[I.5.3(4) on p.56]{Bridson-Haefliger(1999)}
  that the induced action of $g$ on $Y_a^+$ 
  (and therefore also the induced action on $Y_a$)
  is isometric. Since $X$ is proper by assumption,
  $\FS_a$ is proper by assertion~\ref{prop:Y_a:closed_image}.
  Since $\FS_a$ is isometric to $Y_a \times \IR$, the metric space
  $Y_a$ is proper.
  \\[1ex]~\ref{prop:Y_a:VCyc}
  By Lemma~\ref{lem:type-I-VCyc}
  $G_y$ is virtually cyclic of type I.
  We proceed by contradiction to show that $G_a y$ is
  discrete. Assume that there are $g_n \in G_a$, $n \in \IN$
  such that $g_n y \neq g_m y$ if $n \neq m$ and 
  $g_n y \to y_0$. 
  Pick $c \in \FS_a$ such that $q_a(c) = y$ and $\tau_a(c) = 0$.
  Pick $g \in a \subseteq G_{\leq \gamma}^{\hyp}$ such that
  $c$ is an axis for $g$.
  Then $g c = \Phi_{l(g)}(c)$,
  see Lemma~\ref{lem:basic_prop_l_plus_Min}~\ref{lem:basic_prop_l_plus_Min:hyperbolic}.
  Thus $\tau_a(g_n g g_n^{-1} (g_n c)) =  l(g) + \tau_a( g_nc )$
  and $(g_ngg_n^{-1})g_n y = g_n y$.
  By replacing  $g_n$ by $(g_n g g_n^{-1})^{l_n} g_n$ for some $l_n \in \IZ$
  we can arrange that $\tau_a (g_n c ) \in [0,l(g)]$. By passing to a subsequence
  we can arrange that the sequence $\tau_a (g_n c )$ converges.
  Since $q_a(g_n c) = g_n c$ converges to $y_0$, we conclude 
  from~\ref{prop:Y_a:product} that $g_n c \to d$ as 
  $n \to \infty$ for some $d \in \FS(X)$.
  Now Lemma~\ref{lem:proper-action-constant-g_n} implies
  that $g_n c = g_m c$ for infinitely many $n,m$.
  This contradicts $g_n y \neq g_m y$ for $n \neq m$.
\end{proof}

In the proofs of the next two results we will denote
by $\pi_a \colon Y_a \to G_a\backslash Y_a$ the
quotient map.
We point out that $\pi_a$ is open, since for any open 
subset $V \subseteq Y_a$ the subset 
$\pi_a^{-1}(\pi_a(V)) = \bigcup_{g \in G_a} g \cdot V$ is open. 

\begin{lemma} \label{lem:dim(Ga_backslash_Y_a_le_dim(X)}
  We have
  $$\dim(G_a \backslash Y_a) \le \dim(X).$$
\end{lemma}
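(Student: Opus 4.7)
My plan is to prove the inequality in two stages: first, to establish $\dim Y_a \leq \dim X$ from the geometric realization of $Y_a$ inside $X$; second, to transfer this bound across the quotient map $\pi_a$ using its openness together with a standard result from dimension theory.

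For the first stage, I would recall from the proof of Proposition~\ref{prop:Y_a}~\ref{prop:Y_a:metric} that $Y_a$ is identified isometrically with a subspace of the convex subspace $Y_a^+ \subseteq X$. Since covering dimension is monotone under taking subspaces in separable metric spaces, this immediately gives $\dim Y_a \leq \dim Y_a^+ \leq \dim X$.

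For the second stage, I would observe that the quotient map $\pi_a \colon Y_a \to G_a \backslash Y_a$ is continuous, surjective, and open (as remarked just before the lemma). Its fibers are the $G_a$-orbits in $Y_a$, which are discrete by Proposition~\ref{prop:Y_a}~\ref{prop:Y_a:VCyc}; since $Y_a$ is a proper, and hence separable, metric space, these discrete fibers are in particular countable and zero-dimensional. I would then invoke a classical theorem from dimension theory asserting that for a continuous open surjection from a separable metric space onto a topological space with countable (equivalently zero-dimensional) fibers, the dimension of the target is bounded above by that of the source. Applied to $\pi_a$, this yields $\dim(G_a \backslash Y_a) \leq \dim Y_a$, and chaining with the first stage completes the proof.

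The principal difficulty I expect lies not in the geometric input but in pinning down a clean formulation and reference for the dimension-theoretic result on open maps with countable fibers: the theorem itself is classical (going back to work of Aleksandrov and Hurewicz and treated in standard dimension theory texts), but its precise formulation varies between references, so some care is needed to check that separability of both source and target, openness and continuity of the quotient map, and countability of the fibers are all properly in place. Should a suitable reference be unavailable, the alternative would be to argue locally, noting that each $[y] \in G_a \backslash Y_a$ has a neighborhood homeomorphic to $G_y \backslash B_\e(y)$ for small $\e$ (using discreteness of orbits) and that $G_y$ is virtually cyclic with all orbits on $B_\e(y)$ finite (since they lie on bounded spheres about the fixed point $y$), which reduces the problem to a well-understood quotient by a group acting with finite orbits.
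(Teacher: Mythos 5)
Your proof is correct but follows a genuinely different route from the paper's. The paper never bounds $\dim Y_a$ directly; instead it extracts a $G_a$-cocompact open piece $U = B_1(K_a'')$ of $Y_a$ (using Lemma~\ref{lem:properties-A}~\ref{lem:properties-A:GFS_a-closed} and~\ref{lem:properties-A:a-neq-b}) so that $\pi_a|_U \colon U \to G_a\backslash Y_a$ is still open and surjective but now \emph{finite}-to-one (orbits are discrete, and $\overline{B}_1(K_a'')$ is compact), and then applies Nagami's theorem on finite-to-one open surjections of paracompact spaces to get $\dim(G_a\backslash Y_a) = \dim U$; finally $\dim U \le \dim X$ is obtained via the Munkres exercise on locally compact second-countable spaces, checking compact subsets of $U$ are closed in $X$. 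Your approach replaces both ingredients: the subspace theorem for separable metric spaces handles $\dim Y_a \le \dim Y_a^+ \le \dim X$ in one stroke (cleaner, and legitimate since $X$ is proper hence separable metric), and the countable-to-one version of the Aleksandrov open-map theorem handles the quotient directly, without manufacturing $U$. What the paper's detour through $U$ buys is that it only needs the finite-to-one case of the open-map theorem, which is less delicate and has a cleaner reference (Nagami). What your version buys is a shorter argument with fewer auxiliary objects, at the cost of relying on the somewhat subtler countable-to-one statement.

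One point you flag but don't actually resolve: the countable-to-one theorem you invoke requires the \emph{target} $G_a\backslash Y_a$ to be a separable metric space, and this is not automatic from the hypotheses. The paper supplies the argument (discrete orbits of an isometric action are closed in the proper metric space $Y_a$, so the quotient carries the quotient metric and inherits separability from $Y_a$); you should do the same rather than leave it as a caveat. Your backup local argument via $G_y\backslash B_\e(y)$ is also sound (properness of the $G_a$-action on $Y_a$ gives the local slice, discreteness gives finiteness of $G_y$-orbits on $\overline{B}_\e(y)$), and would in fact route you back to the finite-to-one case, landing closer to the paper's strategy.
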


\begin{proof}
  Let $K_a \subseteq G \cdot \FS_a$ be compact such that
  $G \cdot K_a = G \cdot \FS_a$, see 
  Lemma~\ref{lem:properties-A}~\ref{lem:properties-A:GFS_a-closed}.
  Using Lemma~\ref{lem:properties-A}~\ref{lem:properties-A:a-neq-b}  
  we conclude that there is a compact subset $K'_a \subseteq \FS_a$
  such that $\FS_a = G_a \cdot K'_a$. 
  Let $K_a'' \subseteq Y_a$  be the compact subset $q_a(K_a')$. 
  Define $U := B_1(K_a'')$.
  Then $U$ is an open subset of $Y_a$ with $Y_a = G_a \cdot U$.
  Let $i \colon U \to Y_a$ be the inclusion. 
  Since $U$ is open, $i$ is open.
  As pointed out above, $\pi_a \colon Y_a\to G_a\backslash Y_a$ is open.
  Hence the composite
  $\pi_a \circ i \colon U \to G_a \backslash Y_a$ is open and surjective. 
  Since $Y_a$ is a proper metric space by 
  Proposition~\ref{prop:Y_a}~\ref{prop:Y_a:metric}
  and $K_a'' \subseteq Y_a$ is compact, 
  the set $\overline{B}_1(K_a'')$   is compact subset of $Y_a$. 
  Since for every $y \in Y$ the
  orbit $G_ay \subseteq Y_a$ is discrete by 
  Proposition~\ref{prop:Y_a}~\ref{prop:Y_a:VCyc},
  the intersection $\overline{B}_1(K_a') \cap G_ay$ 
  and hence also the intersection
  $U \cap G_ay$ is finite. 
  Hence the composite
  $\pi_a \circ i \colon U \to G_a \backslash Y_a$ is finite-to-one. 
  Its source is a metric as $U$ is a subspace of the metric space $Y_a$. 
  Since $G_a$ acts  isometrically on $Y_a$ and for every $y \in Y$ the
  orbit $G_ay \subseteq Y_a$ is discrete by 
  Proposition~\ref{prop:Y_a}~\ref{prop:Y_a:VCyc}, the 
  quotient $G_a \backslash Y_a$ is also a metric space. 
  Since every metric space is paracompact by Stone's Theorem
  (see~\cite[Theorem~4.3 in Chapter~VI on p.~256]{Munkres(1975)}, 
  the composite
  $\pi_a \circ i \colon U \to G_a \backslash Y_a$ is a finite-to-one 
  open surjective map of paracompact spaces. 
  Hence we conclude 
  from~\cite[4.1 on p.35]{Nagami(1960mappings-of-finite)} 
  $$\dim(U) = \dim(G_a\backslash Y_a).$$
  
  Since $Y_a$ is a proper metric space by 
  Lemma~\ref{prop:Y_a}~\ref{prop:Y_a:metric}
  it is locally compact and can be written as the countable 
  union of compact subspaces
  and hence contains a countable dense subset. Hence the open subset
  $U$ is locally compact and has a countable basis for its topology. 
  Since any compact subset $L \subset U$ is a 
  closed subset of $X$ and satisfies
  $\dim(L) \le \dim(X)$, we conclude
  $\dim(U) \le \dim(X)$ 
  from~\cite[Exercise~9 in Chapter~7.9 on p.~315]{Munkres(1975)}.
  This finishes the proof of Lemma~\ref{lem:dim(Ga_backslash_Y_a_le_dim(X)}.
\end{proof}

\begin{proposition}
  \label{prop:cover-Y_a}
  Let $\gamma > 0$ and $a \in A_{\leq \gamma}$.
  There is an open $\VCyc$-cover $\calv_a$ of $Y_a$ such that
  \begin{enumerate}
  \item \label{prop:cover-Y_a:dim} 
      $\dim \calv_a \leq \dim X$;
  \item \label{prop:cover-Y_a:invariant} 
      $\calv_a$ is $G_a$-invariant, i.e.,
      $g \cdot V \in \calv_a$ if $g \in G_a$ and $V \in \calv_a$;
  \item \label{prop:cover-Y_a:finite} 
      $G_a \backslash \calv_a$ is finite.
  \end{enumerate}
\end{proposition}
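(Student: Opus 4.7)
The plan is to build for each point $y \in Y_a$ a small open $\VCyc$-neighborhood $U_y$, descend the collection $\{\pi_a(U_y)\}$ to the compact quotient $\overline{Y_a} := G_a\backslash Y_a$, refine down there using $\dim \overline{Y_a} \le \dim X$ from Lemma~\ref{lem:dim(Ga_backslash_Y_a_le_dim(X)}, and then lift back up to $Y_a$ and spread out by the $G_a$-action. Here $\pi_a\colon Y_a \to \overline{Y_a}$ denotes the (open) quotient map; compactness of $\overline{Y_a}$ follows because the compact subset $K_a''\subseteq Y_a$ constructed in the proof of Lemma~\ref{lem:dim(Ga_backslash_Y_a_le_dim(X)} satisfies $G_a\cdot K_a'' = Y_a$, whence $\pi_a(K_a'') = \overline{Y_a}$.

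For the first step, given $y \in Y_a$ I would use the discreteness of the orbit $G_ay$ from Proposition~\ref{prop:Y_a}~\ref{prop:Y_a:VCyc} to choose $\delta_y>0$ with $\overline{B}_{\delta_y}(y) \cap G_ay = \{y\}$, and put $U_y := B_{\delta_y/3}(y)$. Because $G_y$ fixes $y$ and acts by isometries, $U_y$ is $G_y$-invariant; conversely, if $gU_y \cap U_y \neq \emptyset$ for some $g \in G_a$ there exist $z, z' \in U_y$ with $z = g z'$, and then
\[
d_a(y,gy) \le d_a(y,z) + d_a(g z',gy) = d_a(y,z) + d_a(z',y) < 2\delta_y/3 < \delta_y
\]
forces $gy = y$, i.e.\ $g \in G_y$. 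Hence $G_{U_y} = G_y$, which is virtually cyclic by Proposition~\ref{prop:Y_a}~\ref{prop:Y_a:VCyc}, so $U_y$ is an open $\VCyc$-subset of $Y_a$.

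For the second step, I would use $\dim \overline{Y_a}\le N := \dim X$ together with compactness of $\overline{Y_a}$ to refine $\{\pi_a(U_y)\}_{y \in Y_a}$ to a finite open cover $\{V_1,\dots,V_n\}$ of $\overline{Y_a}$ with $\dim\{V_i\}\le N$ and $V_i \subseteq \pi_a(U_{y_i})$ for some choice of $y_i$. The lift $\widetilde{V_i} := \pi_a^{-1}(V_i) \cap U_{y_i}$ is open, satisfies $\pi_a(\widetilde{V_i}) = V_i$, and is a $\VCyc$-subset with $G_{\widetilde{V_i}} = G_{y_i}$: any $g \in G_a$ with $g\widetilde{V_i} \cap \widetilde{V_i} \neq \emptyset$ must satisfy $gU_{y_i} \cap U_{y_i} \neq \emptyset$, hence lies in $G_{y_i}$, and $\widetilde{V_i}$ is $G_{y_i}$-invariant since both factors in its definition are. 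I would then set
\[
\calv_a := \{ g\widetilde{V_i} \mid g \in G_a,\ 1 \le i \le n \}.
\]
This is a $G_a$-invariant open $\VCyc$-cover of $Y_a$ (using $\bigcup_{g \in G_a} g\widetilde{V_i} = \pi_a^{-1}(V_i)$) with $|G_a\backslash \calv_a| \le n$.

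The final check, and the main obstacle, is the dimension bound. Suppose $z$ lies in $m$ pairwise distinct elements $g_1\widetilde{V_{i_1}},\dots,g_m\widetilde{V_{i_m}}$ of $\calv_a$. If $i_j = i_k$ for some $j \neq k$, then $z \in g_j\widetilde{V_{i_j}} \cap g_k\widetilde{V_{i_j}}$ implies $\widetilde{V_{i_j}} \cap g_j^{-1}g_k \widetilde{V_{i_j}} \neq \emptyset$, so the $\VCyc$-property forces $g_j\widetilde{V_{i_j}} = g_k\widetilde{V_{i_j}}$, contradicting distinctness. Thus the $i_j$ are pairwise distinct; since $\pi_a(z) \in \bigcap_j V_{i_j}$ the bound $\dim\{V_i\} \le N$ gives $m \le N+1$, so $\dim \calv_a \le N = \dim X$. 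The delicate point is really this interplay between the specific choice of lift $\widetilde{V_i} \subseteq U_{y_i}$ and the identity $G_{U_{y_i}} = G_{y_i}$ established in the second step: together these ensure that both the $\VCyc$-property and the combinatorial bound on intersections pass upstairs from $\overline{Y_a}$ to $Y_a$ under the $G_a$-action.
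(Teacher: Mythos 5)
Your proof is correct and follows essentially the same approach as the paper: choose small $\VCyc$-balls around each $y \in Y_a$ using discreteness of the $G_a$-orbit, push to the compact quotient $G_a \backslash Y_a$, refine there to a finite cover of dimension $\le \dim X$ using Lemma~\ref{lem:dim(Ga_backslash_Y_a_le_dim(X)}, and pull back and saturate by $G_a$. The only difference is that you spell out the verifications (the $\delta_y/3$-argument for $G_{U_y}=G_y$, and the dimension count via pairwise distinct indices) which the paper leaves terse.
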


\begin{proof}
  Because of Proposition~\ref{prop:Y_a}~\ref{prop:Y_a:metric}
  and~\ref{prop:Y_a:VCyc} for any $y \in Y_a$,
  the open ball of sufficiently small radius is  
  $\VCyc$-neighborhood for $y$.
  Pick for each $y$ such a ball $V_y$.
  Because $\pi_a \colon Y_a \to G_a\backslash Y_a$
  is open,
  $\{ \pi_a(V_y) \mid y \in Y_a \}$ is an open cover of 
  $G_a \backslash Y_a$. 
  By Lemma~\ref{lem:dim(Ga_backslash_Y_a_le_dim(X)}
  it has a refinement $\calw$
  whose dimension is bounded by $\dim X$.
  The $G_a$-action on $Y_a$ is cocompact because it
  is cocompact on $\FS_a$, see 
  Lemma~\ref{lem:properties-A}~\ref{lem:properties-A:GFS_a-closed}.
  Therefore $G_a \backslash Y_a$ is compact.
  Thus we may assume that $\calw$ is finite.
  For any $W \in \calw$ pick $y_W \in \FS_a$ such that 
  $W \subseteq \pi_a(V_{y_W})$.
  Now define
  $\calv_a := \{ \pi_a^{-1}(W) \cap g V_{y_W} 
               \mid W \in \calw, g \in G_a \}$. 
  This is an open $\VCyc$-cover because each $V_y$ is an open 
  $\VCyc$-set.
  Its dimension is bounded by $\dim X$, because the dimension
  of $\calw$ is bounded by $\dim X$ and because for $g \in G_a$
  and $y \in Y$ we have either $V_y =  g V_y$ or
  $V_y \cap g V_y = \emptyset$.
  It is $G_a$-invariant because each $\pi_a^{-1}(W)$ is $G_a$-invariant.
  Finally, $G_a \backslash \calv_a$ is finite because $\calw$ is finite.
\end{proof}


\subsection{The cover $\calv$}

\begin{lemma}
  \label{lem:extending-opens}
  Let $(Z,d_Z)$ be a metric space
  with an action of a group $H$ by isometries.
  Let $A$ be a $H$-invariant subspace.
  For $\emptyset \neq U \subsetneq A$, we define
  \[
    Z(U) := \{ z \in Z \; | \; d_Z(z, U) < d_Z(z, A-U) \} 
  \]
  and set $Z(A) := Z$, $Z(\emptyset) := \emptyset$.
  Then for $U,V \subseteq A$,
  \begin{enumerate}
  \item \label{lem:extending-opens:open}
      $Z(U)$ is open in $Z$;
  \item \label{lem:extending-opens:cap}
      $Z(U \cap V)  =  Z(U) \cap Z(V)$;
  \item \label{lem:extending-opens:intersection-A}
      $Z(U) \cap A = U$ holds if and only if $U$ is open in $A$;
  \item \label{lem:extending-opens:g}
      for all $g \in H$ we have $Z(gU)  =  gZ(U)$.
  \end{enumerate}
\end{lemma}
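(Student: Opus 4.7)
The plan is to establish the four assertions in the order (i), (iv), (iii), (ii), since (ii) requires the most care. Throughout, the main tool is the fact that for any fixed subset $C \subseteq Z$ the function $z \mapsto d_Z(z,C)$ is $1$-Lipschitz, hence continuous, and antimonotone in $C$ (larger sets give smaller distance functions). The boundary conventions $Z(\emptyset) = \emptyset$ and $Z(A) = Z$ are handled trivially in each part.

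For (i), I would write $Z(U)$ as the preimage of $(-\infty,0)$ under the continuous function $z \mapsto d_Z(z,U) - d_Z(z,A-U)$. For (iv), the key observation is that the $H$-invariance of $A$ gives $g \cdot (A - U) = A - gU$, after which the fact that $H$ acts by isometries allows one to translate the defining inequality of $Z(U)$ at $z$ into that of $Z(gU)$ at $gz$. For (iii), the inclusion $Z(U) \cap A \subseteq U$ is automatic, since if $a \in A - U$ then $d_Z(a,A-U)=0$ rules out the strict inequality defining $Z(U)$. The opposite inclusion $U \subseteq Z(U) \cap A$ amounts to requiring $d_Z(u,A-U) > 0$ for every $u \in U$, which I would recognize as the statement that $A - U$ is closed in the metric subspace $A$, equivalently $U$ is open in $A$.

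The heart of the lemma is (ii). The inclusion $Z(U \cap V) \subseteq Z(U) \cap Z(V)$ follows at once by antimonotonicity of the distance functions, applied to the chain $U \cap V \subseteq U$ and $A - U \subseteq A - (U \cap V)$, together with its symmetric counterpart. For the reverse inclusion, I would first dispose of the case $U \cap V = \emptyset$: then $V \subseteq A - U$ and $U \subseteq A - V$ give
\[
d_Z(z,A-U) \; \le \; d_Z(z,V) \; < \; d_Z(z,A-V) \; \le \; d_Z(z,U) \; < \; d_Z(z,A-U),
\]
a contradiction, so $Z(U) \cap Z(V)$ is empty and agrees with $Z(U \cap V) = Z(\emptyset) = \emptyset$. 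In the nonempty case, I use $A - (U \cap V) = (A - U) \cup (A - V)$, assume without loss of generality $d_Z(z,A-U) \le d_Z(z,A-V)$ (so $d_Z(z,A-(U \cap V)) = d_Z(z,A-U)$), and then pick a witness $u \in U$ with $d_Z(z,u) < d_Z(z,A-U)$. The crucial step is to observe that $u$ cannot belong to $A - V$: otherwise $d_Z(z,A-V) \le d_Z(z,u) < d_Z(z,A-U) \le d_Z(z,A-V)$. Hence $u \in U \cap V$, which gives $d_Z(z,U \cap V) \le d_Z(z,u) < d_Z(z,A - (U \cap V))$, i.e. $z \in Z(U \cap V)$.

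The main obstacle is this last witness argument in (ii); the inequalities are easy in isolation, but one has to choose the $u \in U$ strictly before $d_Z(z,A-U)$ and exploit the trichotomy $u \in V$ vs.\ $u \in A - V$ against the WLOG assumption on which of $d_Z(z,A-U)$, $d_Z(z,A-V)$ is smaller. Parts (i), (iii), (iv) reduce to continuity of distance, subspace-topology unpacking, and direct substitution respectively, so they are routine.
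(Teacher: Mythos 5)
Your proof is correct and takes essentially the same approach as the paper's, with the most care rightly lavished on (ii). The one noteworthy difference is there: the paper picks witnesses in both $U$ and $V$ and runs a three-way case analysis on whether $u \in V$ and $v \in U$, while you take a single witness $u \in U$ together with a WLOG on $\min\{d_Z(z,A-U),\, d_Z(z,A-V)\}$, which is a tidier argument resting on the same identity $d_Z\bigl(z, A-(U\cap V)\bigr) = \min\{d_Z(z,A-U),\, d_Z(z,A-V)\}$; your (iii) likewise compresses the paper's $\epsilon$-ball argument into the closure characterization of openness without changing its substance.
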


\begin{proof}~\ref{lem:extending-opens:open}
  Either $Z(U)$ is $\emptyset$ or $Z$ or  
  can be written as the preimage of $(0,\infty)$
  for a continuous function on $Z$. 
  Hence $Z(U)$ is open for every $U \subseteq Z$.
  \\[1ex]~\ref{lem:extending-opens:cap}
  This is obviously true if $U = A$, $U = \emptyset$,
  $V = \emptyset$ or $V = A$ holds, so we can assume without
  loss of generality $\emptyset \not= U \not= A$ and
  $\emptyset \not= V \not= A$ in the sequel.
  Recall that $d_Z(z,U) := \inf\{d_Z(z,u) \mid u \in U\}$ for $z \in Z$
  and $\emptyset \not= U \subseteq Z$.
  One easily checks that $d_Z(z,U), d_Z(z,V) \le d_Z(z,U\cap V)$ and
  $d_Z(z,A-(U \cap V)) = \min\{d_Z(z,A-U),d_Z(z,A-V)\}$ hold
  for $z \in Z$ and open subsets $U,V \subseteq A$.
  This implies $Z(U \cap V) \subseteq Z(U) \cap Z(V)$.

  It remains to show $Z(U) \cap Z(V)  \subseteq Z(U \cap V)$.
  Let $z \in Z(U) \cap Z(V)$.
  Because of $d_Z(z,U) < d_Z(z,A-U)$ there is $u \in U$ with
  $d_Z(z,u) < d_Z(z,A-U)$.
  Because of $d_Z(z,V) < d_Z(z,A-V)$ there is $v \in V$ with
  $d_Z(z,v) < d_Z(z,A-V)$.
  If $u \not\in V$ then $d_Z(z,v) < d_Z(z,u)$.
  If $v \not\in U$ then $d_Z(z,u) < d_Z(z,v)$. In particular
  we have $u \in V$ or $v \in U$.

  Suppose that $u \not\in V$. Then $v \in U$ and
  $d_Z(z,v) < d_Z(z,u) < d_Z(z,A - U)$.
  Thus
  \begin{multline*}
   d_Z(z, V \cap U) \leq d_Z(z,v) 
        < \min\{d_Z(z,A-U),d_Z(z,A-V)\} \\
                            = d_Z(z,A-(U \cap V))
  \end{multline*}
  and $z \in Z(U \cap V)$. Analogously one shows
  $z \in Z(U \cap V)$ if $v \notin U$.
  It remains to treat the case, where $u \in V$ and $v \in V$, 
  or, equivalently, where 
  $u,v \in U \cap V$. 
  We may assume without loss of
  generality $d_Z(z,v) \leq d_Z(z,u)$.
  Then $d_Z(z,v) \leq d_Z(z,u) < d_Z(z,A - U)$.
  Thus
  \begin{multline*}
  d_Z(z, V \cap U) \leq d_Z(z,v) < \min\{d_Z(z,A-U),d_Z(z,A-V)\}
                         \\  = d_Z(z,A-(U \cap V))
  \end{multline*}
  and $z \in Z(U \cap V)$. 
  This proves $Z(U \cap V) = Z(U) \cap Z(V)$.
  \\[1ex]~\ref{lem:extending-opens:intersection-A}
  If $ U = \emptyset$ or $U = A$, this is obvious so that
  it suffices to treat the case $\emptyset \not= U \not= A$.
  If $Z(U) \cap A = U$, then $U$ is open in $A$, 
  because $Z(U)$ is open in $Z$ by~\ref{lem:extending-opens:open}. 
  
  Assume that $U$ is open in $A$. 
  Consider  $z \in U$. Since $U \subseteq A$ is open, there exists
  $\epsilon > 0$ such that 
  $\{y \in Z \mid d_Z(z,y) < \epsilon\} \cap A \subseteq U$.
  Hence $d_Z(x,z) \ge \epsilon$ for $x \in A-U$ and hence 
  $d_Z(z,A-U) \ge \epsilon > 0 = d_Z(z,U)$.
  Therefore $z \in Z(U) \cap A$. 
  Consider $z \in Z(U) \cap A$. 
  Since $d_Z(z,U) < d_Z(z,A-U)$ implies
  $d_Z(z,A-U) > 0$ and hence $z \not\in A-U$, we conclude $z \in U$.
  This proves $Z(U) \cap A = U$.
  \\[1ex]~\ref{lem:extending-opens:g} 
  This is obvious as $H$ acts by isometries.
\end{proof}

\begin{lemma}
  \label{lem:cover-constant-geodesics}
  There is $\e_\IR > 0$ and a $G$-invariant cofinite 
  collection $\calv_\IR$ of open $\Fin$-subsets of $\FS(X)$ such that
  \begin{enumerate}
  \item $\dim \calv_\IR < \infty$;
  \item if the image of $c \in \FS(X)^\IR$ as a generalized geodesic in $X$ 
      (which is a point) intersects (and is therefore contained in) 
      $G \cdot K$, then there is $U \in \calv_\IR$ such that 
      $B_{\e_\IR}(c) = B_{\e_\IR} (\Phi_\IR(c)) \subseteq U$.  
  \end{enumerate}
\end{lemma}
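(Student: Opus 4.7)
The strategy is to build the cover on the closed subspace $\FS(X)^\IR \subseteq \FS(X)$, which under $x \mapsto c_x$ (the constant geodesic at $x$) is isometric to $X$ (indeed $d_\FS(c_x,c_y) = \int_\IR d_X(x,y)/(2e^{|t|})\, dt = d_X(x,y)$) in a $G$-equivariant way, and then transfer to $\FS(X)$ via Lemma~\ref{lem:extending-opens} applied with $Z = \FS(X)$, $A = \FS(X)^\IR$, $H = G$. So I would first construct a $G$-invariant cofinite collection $\calu$ of open $\Fin$-subsets of $X$, of finite topological dimension, which covers $G \cdot K$ and admits a positive Lebesgue number on $G \cdot K$; the extended sets $Z(U) \subseteq \FS(X)$ will then furnish $\calv_\IR$.

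For $\calu$: using properness of the $G$-action on $X$, for each $x \in K$ pick $\delta_x > 0$ with $\{g \in G \mid g B_{\delta_x}(x) \cap B_{\delta_x}(x) \ne \emptyset\} = G_x$, which is finite. Put $W_x := \bigcap_{g \in G_x} g B_{\delta_x/2}(x)$; this is a $G_x$-invariant open neighborhood of $x$ whose setwise stabilizer equals $G_x \in \Fin$. Covering the compact $K$ by finitely many $W_{x_1}, \ldots, W_{x_n}$ and setting $\calu := \{g W_{x_i} \mid g \in G,\, i = 1, \ldots, n\}$ yields a $G$-invariant cofinite collection of open $\Fin$-subsets covering $G \cdot K$. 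A Lebesgue number argument for $\{W_{x_i}\}$ on $K$, together with $G$-invariance of $d_X$, produces $\e_1 > 0$ with $B_{\e_1}(x) \subseteq U$ for some $U \in \calu$ for every $x \in G \cdot K$. Finite dimension of $\calu$ follows from uniform finite multiplicity: if $y \in g W_{x_i}$ then $g x_i \in B_r(y)$ with $r := \max_i \delta_{x_i}/2$, and $|\{g \in G \mid g x_i \in B_r(y)\}|$ is bounded by $|\{h \in G \mid h x_i \in B_{2r}(x_i)\}|$ (translate two such $g_1, g_2$ via $g_1^{-1}$), which is finite by properness of the action and independent of $y$.

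Finally, set $\calv_\IR := \{Z(U) \mid U \in \calu\}$. By Lemma~\ref{lem:extending-opens}~\ref{lem:extending-opens:open} each $Z(U)$ is open in $\FS(X)$; by \ref{lem:extending-opens:cap} one has $Z(U_0) \cap \cdots \cap Z(U_k) = Z(U_0 \cap \cdots \cap U_k) = \emptyset$ whenever the $U_j$ have empty common intersection, giving $\dim \calv_\IR \le \dim \calu < \infty$; by \ref{lem:extending-opens:intersection-A} the trace $Z(U) \cap A$ equals $U$, so the setwise stabilizer of $Z(U)$ is exactly $G_U \in \Fin$, and \ref{lem:extending-opens:g} combined with \ref{lem:extending-opens:cap} yields $g Z(U) = Z(gU)$ and the alternative $Z(U) \cap Z(gU) = Z(U \cap gU) \in \{Z(U), \emptyset\}$, so $\calv_\IR$ is $G$-invariant and cofinite and each $Z(U)$ is an open $\Fin$-subset. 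To verify the long property set $\e_\IR := \e_1/3$: for $c = c_x$ with $x \in G \cdot K$ pick $U \in \calu$ with $B_{\e_1}(x) \subseteq U$; then for $d \in B_{\e_\IR}(c)$, on one hand $d_\FS(d,U) \le d_\FS(d,c_x) < \e_\IR$, while for every $y \in X - U$ we have $d_\FS(d, c_y) \ge d_X(x,y) - d_\FS(d,c_x) > \e_1 - \e_\IR = 2\e_\IR$, whence $d \in Z(U)$ and $B_{\e_\IR}(c) \subseteq Z(U)$. The main obstacle is controlling $\dim \calu$ without a cocompactness hypothesis on the $G$-action; this is exactly where properness must be exploited via the left-translation trick above to bound the multiplicity uniformly in terms of the fixed finite set of base points $\{x_i\}$ rather than in terms of an assumed compact fundamental domain.
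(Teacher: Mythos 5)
Your proof is correct, but it takes a genuinely different route from the paper's. You construct the cover downstairs on $X$ (first producing explicit open $\Fin$-subsets $W_{x_i}$ around points of $K$, taking $G$-translates, and getting a Lebesgue number) and then push it up to $\FS(X)$ via Lemma~\ref{lem:extending-opens} applied to the closed subset $A = \FS(X)^\IR$ and the isometric identification $x \mapsto c_x$. The paper works directly in $\FS(X)$: it uses the properness of the evaluation $c \mapsto c(0)$ (Lemma~\ref{lem:evaluation-is-proper}) to obtain a compact $K_\IR \subseteq \FS(X)^\IR$ whose $G$-orbit covers all relevant constant geodesics, then invokes properness of the $G$-action on $\FS(X)$ (Proposition~\ref{prop:cocompact}) to pick open $\Fin$-neighborhoods $V_c$ of $c \in K_\IR$ directly in $\FS(X)$, and finishes with the same Lebesgue-number compactness argument. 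The paper's version is shorter because it never needs the $Z(\cdot)$-extension mechanism: the $\Fin$-sets are already open in $\FS(X)$. Both approaches are valid; yours has the minor advantage that the $\Fin$-sets of $X$ are written out explicitly, but the cost is carrying the extension lemma. One small remark: your ``uniform finite multiplicity'' argument for $\dim\calu < \infty$ is unnecessary. Since each $W_{x_i}$ is a $\Fin$-subset, for fixed $i$ the translates $gW_{x_i}$ are pairwise disjoint or equal, so at most one can contain a given point; hence $\dim\calu \le n-1$ immediately (this is exactly how the paper bounds $\dim\calv_\IR$ by $|\Lambda|-1$). The more elaborate translation-trick bound is correct but gives a weaker constant and obscures the point.
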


\begin{proof}
  By Lemma~\ref{lem:evaluation-is-proper} the evaluation map $\FS(X) \to X$ 
  defined by $c \mapsto c(0)$ is proper.
  As $\FS(X)^\IR$ is closed, this implies that
  there is a compact subset $K_\IR \subseteq \FS(X)^\IR$ such that  
  $G \cdot K_{\IR}$ contains
  all $c \in \FS(X)^\IR$ whose image lies in $G \cdot K$.  
  For any $c \in K_\IR$ there is an open $\Fin$-subset $V_c$ of 
  $\FS(X)$ containing $c$,
  because the action of $G$ on $\FS(X)$ is proper 
  (see~Proposition~\ref{prop:cocompact}).
  Because $K_\IR$ is compact there is $\e_\IR > 0$ and a finite subset 
  $\Lambda$ of $K_\IR$ such that for any $c \in K_\IR$ there is 
  $\lambda_c \in \Lambda$ such that $B_{\e_\IR}(c) \subseteq V_{\lambda_c}$.
  Now set $\calv_\IR := \{ g V_\lambda \mid g \in G, \lambda \in \Lambda \}$. 
  This is a finite dimensional collection, because the $g V_\lambda$ are 
  $\Fin$-sets and $\Lambda$ is finite.    
\end{proof}

\begin{proof}[Proof of Theorem~\ref{the:covering_of_FS(X)_gamma}]
  Let $R \subseteq A_{\leq \gamma}$ be a subset that contains exactly one
  element from each orbit of the $G$-action. Then $R$ is finite
  by Lemma~\ref{lem:properties-A}~\ref{lem:properties-A:cofinite}.
  For each $a \in R$, let $\calv_a$ be a covering of $Y_a$   
  satisfying the assertions from Proposition~\ref{prop:cover-Y_a}.
  Let $\calw_a := (q_a)^{-1} \calv_a = 
          \{ q_a^{-1}(V) \mid V \in \calv_a \}$.
  For $b \in A_{\leq \gamma}$ pick $a \in R$ and $g \in G$,
  such that $ga = b$ and set 
  $\calw_b := g(\calw_a) = \{ gW \mid W \in \calw_a \}$.
  (This does not depend on the choice of $g$, as $q_a$ is
  $G_a$-equivariant and $\calv_a$ is $G_a$-invariant.)
  By Lemma~\ref{lem:properties-A}~\ref{lem:properties-A:a-neq-b} 
  there is $\delta > 0$ such that, if we set $U_a := B_\delta(\FS_a)$, then
  $U_a \cap U_b = \emptyset$ for $a \neq b \in A_{\leq \gamma}$.
  We now use Lemma~\ref{lem:extending-opens} to extend
  the $W \in \calw_a$ to open subsets of $\FS(X)$
  and define the collection $\calu$ by
  \begin{equation*}
    \calu := \bigcup_{a \in A_{\leq \gamma}} 
     \{ Z_a(W) \cap U_a \mid W \in \calw_a \},
  \end{equation*}
  where $Z_a(W) := \{ c \in \FS(X) \; | \; 
                    d_\FS(c, \FS_a) < d_\FS(c, \FS(X)-\FS_a) \}$.
  Define the desired collection of open subsets of $\FS(X)$ by
  \[
  \calv := \calu \cup \calv_{\IR},
  \]
  where $\calv_\IR$ is from Lemma~\ref{lem:cover-constant-geodesics}.
  It remains to show, that $\calv$ has the desired properties.
  \\[1ex]~\ref{the:covering_of_FS(X)_gamma:VCyc}
  The members of each $\calv_a$ are open $\VCyc$-sets
  with respect to the $G_a$-action by Proposition~\ref{prop:cover-Y_a}.
  The $q_a$ are continuous and $G_a$-equivariant by construction.
  Thus the members of each $\calw_a$ are also open $\VCyc$-sets
  with respect to the $G_a$-action. 
  For each $a$, $\FS_a$ is a $G_a$-set.
  Because the $U_a$ are mutually disjoint, 
  each $U_a$ is an $G_a$-set as well.
  By Lemma~\ref{lem:extending-opens}  each
  $Z_a(W) \cap U_a$ with $W \in \calw_a$
  is an open $\VCyc$-subset of $\FS(X)$ with respect to the $G$-action.
  \\[1ex]~\ref{the:covering_of_FS(X)_gamma:invariance}
  Each $\calv_a$ is $G_a$-invariant.
  The union of the $\calw_a$ is $G$-invariant, because the $q_a$
  are $G_a$-equivariant.
  Thus by Lemma~\ref{lem:extending-opens} the collection of all
  $Z_a(W)$ is $G$-invariant.
  The collection of the $U_a$ is $G$-invariant.
  Therefore $\calu$ is $G$-invariant.
  Since $\calu_\IR$ is $G$-invariant this implies
  that $\calv$ is $G$-invariant.
  \\[1ex]~\ref{the:covering_of_FS(X)_gamma:finite}
  Each $G_a \backslash \calv_a$ is finite by
  Proposition~\ref{prop:cover-Y_a}~\ref{prop:cover-Y_a:finite}.
  Therefore $G \backslash \calu$ is finite.
  Since $G \backslash \calu_\IR$ is finite, $G \backslash \calv$
  is finite.
  \\[1ex]~\ref{the:covering_of_FS(X)_gamma:dim} 
  The $U_a$ are mutually disjoint. 
  For each $a$, $\dim \calw_a = \dim \calv_a$.
  Using Proposition~\ref{prop:cover-Y_a}~\ref{prop:cover-Y_a:dim}
  and Lemma~\ref{lem:extending-opens} we get therefore 
  \begin{eqnarray*}
    \dim \calu 
    &  = & 
    \max_{a \in A} \dim \{ Z_a(W) \cap U_a \mid W \in \calw_a \}
    \\
    & = & 
    \max_{a \in R} \dim \calv_a \leq \dim X. 
  \end{eqnarray*}
  Put $M : = 1 + \dim(\calv_{\IR}) + \dim(X)$.
  This number is independent of $\gamma$ and
  \[
  \dim(\calv) \le 1  + \dim(\calv_{\IR}) + \dim(\calu) = M.
  \]
  \\[1ex]~\ref{the:covering_of_FS(X)_gamma:flow}
  Let $c \in \FS(X)$ such that $c$ intersects $G \cdot K$. 
  If $c \in \FS(X)^\IR$, then 
  $B_{\e_\IR}(\Phi_{\IR}(c)) = B_{\e_\IR}(c)  \subseteq U$
  for some $U \in \calv_\IR$, where $\e_\IR$ is from 
  Lemma~\ref{lem:cover-constant-geodesics}. Hence it 
  remains to show that there is $\e_\calu$ such that for any 
  $c \in \FS(X)_{\leq \gamma}- \FS(X)^\IR$ such that $c$ 
  intersects $G \cdot K$, there is $U \in \calu$ satisfying
  $B_{\e_\calu}(\Phi_{[-\gamma,\gamma]}(c)) \subseteq U$ because then we can take
  $\epsilon = \min\{\epsilon_{\IR}, \epsilon_{\calu}\}$.
  
  Now suppose that the desired $\epsilon_{\calu}$ does not exists, 
  i.e., there are $c_n \in \FS(X)_{\leq \gamma} - \FS(X)^\IR$, 
  $d_n \in \FS(X)$ and $t_n \in \IR$ for $n \ge 1$ such that 
  $c_n$ intersects $ G \cdot K$,
  $d_\FS(\Phi_{t_n}(c_n),d_n) < 1/n$ and $d_n \not\in U$ for all 
  $U \in \calu$ that contain $\Phi_{[-\gamma,\gamma]}(c_n)$.
  Choose $a_n \in R$ with $c_n \in G \cdot \FS_{a_n}$.
  As $R$ is finite, we arrange by passing to a subsequence
  that there is $a \in R$ with $a_n = a$ for all $n$.
  
  Because of Lemma~\ref{lem:properties-A}~\ref{lem:properties-A:GFS_a-closed}
  there are $g_n \in G_a$ such that all $g_n c_n$ 
  are contained in a compact set
  $K_a \subset G \cdot \FS_a$.
  After passing to a subsequence and replacing $c_n$ by $g_n c_n$ and 
  $d_n$ by $g_n d_n$ we can assume that $c_n \to c$ for 
  some $c \in K_a \subset G \cdot \FS_a$. 
  Choose $g \in G$ with $g^{-1} c \in \FS_{a}$.
  Choose $V \in \calv_a$ with $q_a(g^{-1}c) \in V$. 
  Then $g^{-1}c \in q_a^{-1}(V)$ 
  and hence $\Phi_{\IR}(g^{-1}c) \subset q_a^{-1}(V)$.
  Thus $\Phi_\IR(c) \subseteq U$ for some $U \in \calu$, namely for
  $U = g \cdot Z_{a}(W) \cap U_{ga}$, where we set $W := q_a^{-1}(V)$. 
  
  Passing to a further subsequence we can arrange
  that $t_n \to t_0$ for  some $t_0 \in [-\gamma,\gamma]$.
  Then also $d_n \to \Phi_{t_0}(c)$. Hence there is $n_0$ such that
  $d_n \in U$ for $n \ge n_0$,
  because $d_n \to \Phi_{t_0}(c) \in U$. 

  The set $\Phi_{[-\gamma,\gamma]}(c)$
  is a compact subset of the open set $U$. Hence we can find $\delta > 0$ such that
  $B_{\delta}\bigl(\Phi_{[-\gamma,\gamma]}(c)\bigr) \subseteq U$. 
  We have $d_X\bigl(\Phi_t(c),\Phi_t(c_n)\bigr) \le e^{\tau} \cdot d_{FS(X)}(c_n,c)$ 
  for $n \ge 1$ and $t \in [-\tau,\tau]$ by 
  Lemma~\ref{lem:Phi_well-defined}. Since there exists $n_1$ such that
  $ d_{FS(X)}(c_n,c) < e^{-\tau} \cdot \delta$ for $n \ge n_1$, we get
  $\Phi_{[-\gamma,\gamma]}(c_n) \subseteq U$ for $n \ge n_1$, a contradiction.
  This finishes the proof of Theorem~\ref{the:covering_of_FS(X)_gamma}.
\end{proof}


\typeout{---------- Flow spaces and S-long covers ----------------}

\section{Flow spaces and $S$-long covers}
\label{sec:flow-spaces-and-S-long-covers}

\begin{summary*}
  In Definitions~\ref{def:at-infinity_plus_periods} 
  and~\ref{def:contracting-transfers} 
  we formulate two conditions for
  a flow space $\FS$ with an action of a group $G$.
  The first condition asks for the existence of 
  long covers of uniformly bounded dimension 
  for a subset of $\FS$ that contains the periodic
  orbits of the flow and is large in the sense that 
  its complement is cocompact for the action of $G$.
  (In our application later the action of $G$ on $\FS$
  will be cocompact and we will get the second part of this condition 
  for free; we expect however that this condition can also
  be verified in situations where the action is not cocompact.)  
  The second condition concerns the dynamic of the flow
  with respect to a suitable homotopy action. 
  Our main result is Proposition~\ref{prop:wide-covers-with-flow-spaces}
  which asserts that $G$ is transfer reducible, provided 
  the two conditions are satisfied. 
\end{summary*}

In this section we fix the following convention, 
compare~\cite[Convention~1.3]{Bartels-Lueck-Reich(2008cover)}.

\begin{convention}
  \label{conv:FS}
  Let
  \begin{itemize}
    \item $G$ be a group;
    \item $\calf$ be a family of subgroups of $G$;
    \item $(\FS,d_{\FS})$ be a locally compact metric space with a proper 
      isometric $G$-action;     
    \item $\Phi \colon \FS \times \IR \to \FS$ be a flow.
  \end{itemize}
  We assume that the following conditions are satisfied:
  \begin{itemize}
    \item $\Phi$ is $G$-equivariant;
    \item $\FS - \FS^{\IR}$  is locally connected; 
    \item $k_G :=  \sup\{|H| \mid H \subseteq G
             \;\text{subgroup with finite order } |H|\} < \infty$;
    \item $\dim ( \FS-\FS^{\IR} ) < \infty$;
    \item the flow is uniformly continuous in the following sense:
      for $\alpha > 0$ and $\e > 0$ there is $\delta > 0$ such that 
      \begin{equation}
        \label{eq:flow-uniform-continuous}
          d_\FS(z,z') \leq \delta, \tau \in [-\alpha,\alpha]
          \implies
          d_\FS(\Phi_{\tau}(z),\Phi_{\tau}(z')) \leq \e.
      \end{equation}
  \end{itemize}
\end{convention}

For a subset $I \subseteq \IR$ we set 
$\Phi_I(z) := \{\Phi_t(z) \mid t \in I\}$. 
For $z \in \FS$ we define its \emph{$G$-period}
\begin{equation*}
 \peri{G}{\Phi}(z) =  
    \inf\{ \tau  \mid \tau > 0, \; \exists \; g \in G \;\text{with }
              \Phi_{\tau}(z) = gz\} \quad \in [0,\infty],
\end{equation*}
where the infimum over the empty set is defined to be $\infty$. 
Obviously $\peri{G}{\Phi}(z) = 0$ if and only if $z \in \FS^{\IR}$. 
If $L \subseteq \FS$ is an orbit of the flow $\Phi$, define its
\emph{$G$-period} by
\begin{equation*}
  \peri{G}{\Phi}(L)  :=  \peri{G}{\Phi}(z) 
\end{equation*}
for any choice of $z \in \FS$ with $L = \Phi_{\IR}(z)$.
For $\gamma \geq 0$ put
\begin{eqnarray}
  \FS_{>\gamma} & := & \{z \in \FS \mid \peri{G}{\Phi}(z) > \gamma \};
  \label{FS_greater_gamma}
  \\
  \FS_{\le \gamma} & := & \{z \in \FS \mid \peri{G}{\Phi}(z) \le  \gamma \}.
  \label{FS_less_or_equal_gamma}
\end{eqnarray}

\begin{definition}
   \label{def:at-infinity_plus_periods}
    We will say that $\FS$ 
    \emph{admits long $\calf$-covers at infinity and periodic flow lines}
    if the following holds:
    
    There is $M > 0$ such that for every $\gamma > 0$ there is a 
    collection $\calv$ of of open $\calf$-subsets of $\FS$ and $\e > 0$  
    satisfying:
    \begin{enumerate}
         \item \label{def:at-infty_plus_periods:G-inv}
             $\calv$ is $G$-invariant: $g \in G$, 
             $V \in \calv \implies gV \in \calv$;
         \item \label{def:at-infty_plus_periods:dim}
             $\dim \calv \leq M$;
         \item \label{def:at-infty_plus_periods:covers}
           there is a compact subset $K \subseteq \FS$ such that
           \begin{itemize}
             \item $\FS_{\leq \gamma} \cap G \cdot K = \emptyset$;
             \item for $z \in \FS - G \cdot K$ there is 
                $V \in \calv$ such that 
                $B_\e(\Phi_{[-\gamma,\gamma]}(z)) \subset V$.
           \end{itemize}
    \end{enumerate}
\end{definition}

\begin{theorem}
  \label{thm:cover-compact-in-FS}
  There is $M \in \IN$ such that the following holds:
  
  For any $\alpha > 0$ there is $\gamma > 0$ such that for 
  any compact subset $K$ of $\FS_{> \gamma}$  
  there is a collection of open 
  $\VCyc$-subsets of $\FS$ such that 
  \begin{enumerate}
    \item \label{thm:cover-compact-in-FS:G-inv}
        $\calv$ is $G$-invariant: $g \in G$, 
        $V \in \calv \implies gV \in \calv$;
    \item \label{thm:cover-compact-in-FS:dim}
        $\dim \calv \leq M$;
    \item \label{thm:cover-compact-in-FS:cofinite}
        $G \backslash \calv$ is finite; 
       
    \item \label{thm:cover-compact-in-FS:covers}
        for every $z \in G \cdot K$ there is 
       $V \in \calv$ such that $\Phi_{[-\alpha,\alpha]}(z) \subset V$.
  \end{enumerate}
\end{theorem}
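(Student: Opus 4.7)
The plan is to invoke the long thin cover construction for flow spaces from \cite{Bartels-Lueck-Reich(2008cover)}. That machinery produces, for a suitable flow space, $\VCyc$-covers whose dimension is bounded by a constant depending only on the data of Convention~\ref{conv:FS} (in particular on $\dim(\FS - \FS^{\IR})$ and $k_G$), and whose flow boxes have length comparable to any prescribed $\alpha$, provided the $G$-period of orbits being covered is sufficiently large in terms of $\alpha$. The hypothesis $K \subseteq \FS_{>\gamma}$ is precisely what guarantees such a lower bound on the $G$-period along $K$. So I would first choose the universal dimension bound $M$ from the Bartels-Lueck-Reich construction (depending on $\dim(\FS-\FS^\IR)$ and $k_G$), then for a given $\alpha$ choose $\gamma$ large enough so that the construction goes through with flow-length at least $\alpha$ on any compact piece of $\FS_{>\gamma}$.

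The first step of the construction is to pick transversal slices to the flow at points of $K$; the $\VCyc$-character of the resulting open sets comes from the fact that the stabilizer of a flow line under a proper isometric $G$-action is virtually cyclic (by the argument of Lemma~\ref{lem:type-I-VCyc}, which only uses $k_G < \infty$ and properness). Flowing each slice by $(-\alpha - \delta, \alpha+\delta)$ produces open flow boxes of length at least $\alpha$; this is legitimate as long as $\alpha + \delta$ is smaller than the $G$-period at every point of the slice, which is what $K \subseteq \FS_{>\gamma}$ provides for $\gamma$ suitably larger than $\alpha$. The dimension count on the union of these boxes is controlled by the dimension of the slices, which is at most $\dim(\FS - \FS^\IR)$ (this uses local connectedness of $\FS - \FS^\IR$ to select honest transversals), giving a dimension bound independent of $\alpha$.

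Because $K$ is compact and $\FS$ is locally compact, one selects a relatively compact open neighborhood $U$ of $K$ with $\overline{U}\subset\FS_{>\gamma'}$ for some $\gamma' < \gamma$ still large enough to apply the construction. Finitely many slices suffice to cover $K$, and applying $G$ produces a $G$-invariant collection $\calv$ with $G\backslash\calv$ finite, covering $G\cdot K$ by flow-long $\VCyc$-boxes of dimension at most $M$. The main obstacle is precisely the adaptation of the technical long thin cover construction of \cite{Bartels-Lueck-Reich(2008cover)} from the cocompact setting to the setting where only the subset $K$ to be covered is compact; this is not automatic and requires checking that the local flow box construction and the transversal dimension estimate use only local finiteness (which is supplied by compactness of $K$ together with properness of the $G$-action), not cocompactness of $G \backslash \FS$. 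Once this adaptation is carried out, all four conclusions of the theorem follow directly from the output of that construction.
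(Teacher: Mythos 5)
Your proposal takes essentially the same route as the paper: the paper's proof also reduces the statement to the long thin cover machinery of Bartels--L\"uck--Reich (specifically their Proposition~4.1 and the argument on p.~1848), and makes precisely the same observation you do --- that the cocompactness hypothesis in that reference is used only to guarantee local compactness of the flow space (supplied here by Convention~\ref{conv:FS}) and that the construction only needs to produce a cover over a cocompact part of $\FS$, namely $G \cdot K$. Your description of the transversal-slice/flow-box mechanism and the role of $\gamma$ relative to $\alpha$ is a reasonable sketch of what is happening inside the cited construction, though like the paper you are deferring the actual verification that no step of that construction secretly needs global cocompactness.
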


\begin{proof}
  This follows from the techniques used and developed in 
  Sections~2-5
  of~\cite{Bartels-Lueck-Reich(2008cover)},
  but is unfortunately note state in precisely this form.
  
  The main input is Proposition~4.1 
  of~\cite{Bartels-Lueck-Reich(2008cover)}.
  In this reference it is assumed that that $G$-acts
  cocompactly on $\FS$, but this is not used in its proof,
  mainly because the statement  concerns only a cocompact part
  of the flow space.
  (In~\cite{Bartels-Lueck-Reich(2008cover)} cocompactness of the
  action is used to conclude that the flow space is locally compact;
  note that we assumed this in Convention~\ref{conv:FS}.)
  Therefore Proposition~4.1 of~\cite{Bartels-Lueck-Reich(2008cover)} 
  is valid in the present situation as well.
  Theorem~\ref{thm:cover-compact-in-FS} can be deduced from this
  using the argument given on p.1848 
  of~\cite{Bartels-Lueck-Reich(2008cover)}.   
\end{proof}

\begin{theorem}
  \label{thm:long-thin-cover}
  Assume that $\FS$ admits long $\calf$-covers at 
  infinity and periodic flow lines
  and that $\calf$ contains the family $\VCyc$ of virtually cyclic subgroups.

  Then there is $\widehat{N} \in \IN$ such that for every $\alpha > 0$ there exists
  an open $\calf$-cover $\calu$ of $\FS$ of dimension at most $\widehat{N}$ and
  $\epsilon > 0$ (depending on $\alpha$) such that the following holds:
  \setlength\labelwidth\origlabelwidth
  \begin{enumerate}
  \item 
    For every $z \in \FS$ there is
    $U \in \calu$ such that
    $B_{\epsilon}\big(\Phi_{[-\alpha,\alpha]}(z)\big) \subseteq U$.
  \item $\calu / G$ is finite.
  \end{enumerate}
\end{theorem}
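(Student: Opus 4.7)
The plan is to construct $\calu$ as the union of two $G$-invariant open collections: an ``at infinity'' collection coming from the hypothesis that $\FS$ admits long $\calf$-covers at infinity and periodic flow lines, plus a ``compact core'' collection from Theorem~\ref{thm:cover-compact-in-FS}. The containment $\VCyc \subseteq \calf$ is needed because Theorem~\ref{thm:cover-compact-in-FS} produces $\VCyc$-subsets that we want to regard as $\calf$-subsets.

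Given $\alpha > 0$, I first invoke Theorem~\ref{thm:cover-compact-in-FS} with parameter $2\alpha$ to obtain a constant $\gamma_0$, and set $\gamma := \max\{\gamma_0, \alpha\}$. Then I apply Definition~\ref{def:at-infinity_plus_periods} with this $\gamma$ to obtain a $G$-invariant collection $\calv^{\infty}$ of dimension at most $M$ (with $M$ independent of $\alpha$), a number $\epsilon^{\infty} > 0$, and a compact subset $K \subseteq \FS$ such that $\FS_{\leq \gamma} \cap G\cdot K = \emptyset$ and for each $z \in \FS - G\cdot K$ there is $V \in \calv^{\infty}$ with $B_{\epsilon^{\infty}}(\Phi_{[-\gamma,\gamma]}(z)) \subseteq V$. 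Since $K \subseteq \FS_{>\gamma_0}$, Theorem~\ref{thm:cover-compact-in-FS} applied to $K$ with parameter $2\alpha$ produces a $G$-invariant collection $\calv^{K}$ of open $\VCyc$-subsets of $\FS$ with $\dim \calv^{K} \leq M'$, $G\backslash\calv^{K}$ finite, and such that for each $z \in G\cdot K$ some $V \in \calv^{K}$ satisfies $\Phi_{[-2\alpha,2\alpha]}(z) \subseteq V$. I then set $\calu := \calv^{\infty} \cup \calv^{K}$, $\widehat{N} := M + M' + 1$, and $\epsilon := \min\{\epsilon^{\infty},\epsilon^{K}\}$, where $\epsilon^{K}$ is a uniform thickening constant to be produced. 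The covering property is verified by splitting into cases: if $z \in G\cdot K$, use the element of $\calv^{K}$ together with $\epsilon^{K}$; if $z \notin G\cdot K$, use the element of $\calv^{\infty}$, whose $\epsilon^{\infty}$-thickening property for $\Phi_{[-\gamma,\gamma]}$ immediately implies the desired thickening property for $\Phi_{[-\alpha,\alpha]}$ because $\gamma \geq \alpha$.

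The principal obstacle is producing the uniform $\epsilon^{K} > 0$ satisfying $B_{\epsilon^{K}}(\Phi_{[-\alpha,\alpha]}(z)) \subseteq V$ for every $z \in G\cdot K$ and a suitable $V \in \calv^{K}$. Since $G$ acts isometrically on $\FS$ and permutes $\calv^{K}$, it is enough to establish this uniformly for $z \in K$. Here one combines the compactness of $K$, the uniform continuity of the flow~\eqref{eq:flow-uniform-continuous}, and a Lebesgue-number-type argument against finitely many $G$-translates of orbit representatives of $\calv^{K}$ that meet $K$; the factor of $2$ (i.e.\ the use of $\Phi_{[-2\alpha,2\alpha]}$ rather than $\Phi_{[-\alpha,\alpha]}$) provides the slack needed to absorb the $\epsilon$-thickening in both the temporal and spatial directions. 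A secondary minor issue is that $\calu/G$ must be finite; this requires $\calv^{\infty}/G$ to be finite in addition to $\calv^{K}/G$, a property not explicitly part of Definition~\ref{def:at-infinity_plus_periods} as listed but which holds in the intended applications (cf.\ Theorem~\ref{the:covering_of_FS(X)_gamma}~\ref{the:covering_of_FS(X)_gamma:finite}).
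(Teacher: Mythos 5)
Your proposal follows essentially the same route as the paper's proof: take the union of the cover $\calv_0$ supplied by Definition~\ref{def:at-infinity_plus_periods} (covering the complement of $G\cdot K$) and the cover $\calv_1$ supplied by Theorem~\ref{thm:cover-compact-in-FS} (covering $G\cdot K$), bound the dimension by $M_0+M_1+1$, and upgrade the containment $\Phi_{[-\alpha,\alpha]}(z)\subseteq V$ on $G\cdot K$ to a uniform $\epsilon$-thickening via a compactness-plus-uniform-continuity argument, which is exactly the content of the paper's Lemma~\ref{lem:extra-eps}. Two small remarks on your write-up: the factor-of-$2$ inflation of $\alpha$ when invoking Theorem~\ref{thm:cover-compact-in-FS} is superfluous and your rationale for it is slightly off --- extra temporal length in $\Phi_{[-2\alpha,2\alpha]}(z)\subseteq V$ does not by itself yield a metric $\epsilon$-neighborhood of $\Phi_{[-\alpha,\alpha]}(z)$ inside $V$, so the compactness argument (which works directly with $\alpha$) is really doing all the work and the factor of $2$ buys nothing; and your observation that Definition~\ref{def:at-infinity_plus_periods} does not explicitly require $G\backslash\calv$ to be finite is a genuine omission, which the paper's own proof also silently glosses over (conclusion~(ii) holds in the intended application by Theorem~\ref{the:covering_of_FS(X)_gamma}~\ref{the:covering_of_FS(X)_gamma:finite}, and is in any case not used downstream).
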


\begin{proof}
  Let $M_0$ be the number $M$ appearing in 
  Definition~\ref{def:at-infinity_plus_periods}
  and $M_1$ be the number $M$ appearing in
  Theorem~\ref{thm:cover-compact-in-FS}.
  We set $\widehat{N} := M_0 + M_1 + 1$.
  Theorem~\ref{thm:cover-compact-in-FS} 
  also provides a number $\gamma$ depending on $\alpha$.
  We can assume that $\gamma \geq \alpha$.
  Since $\FS$ admits long $\calf$-covers at infinity and periodic
  flow lines we can find a collection $\calv_0$ of open 
  $\calf$-subsets of $\FS$ and $\e_0 > 0$ such 
  that~\ref{def:at-infty_plus_periods:G-inv} 
  to~\ref{def:at-infty_plus_periods:covers} from 
  Definition~\ref{def:at-infinity_plus_periods} hold.
  
  Next we apply Theorem~\ref{thm:cover-compact-in-FS} and 
  obtain a collection $\calv_1$ of open $\VCyc$-subsets of $\FS$
  and a compact subset $K \subseteq \FS$
  such that~\ref{thm:cover-compact-in-FS:G-inv} 
  to~\ref{thm:cover-compact-in-FS:covers} from
  Theorem~\ref{thm:cover-compact-in-FS} hold.
  A simple compactness argument, provided in Lemma~\ref{lem:extra-eps}
  below, shows that there is $\e_1 > 0$ such that
  for every $z \in G \cdot K$ there is $V \in \calv_1$
  such that
  \begin{equation*}
    B_{\e_1} (\Phi_{[-\alpha,\alpha]}(z)) \subset V.
  \end{equation*}
  Now set $\e := \min \{ \e_0, \e_1 \}$ and 
  $\calu := \calv_0 \cup \calv_1$.
\end{proof}

\begin{lemma}
  \label{lem:extra-eps}
  Assertion~\ref{thm:cover-compact-in-FS:covers} in 
  Theorem~\ref{thm:cover-compact-in-FS} can be strengthened to
  \begin{enumerate}
  \item[(iv')] There is $\e > 0$ such that for any $z \in G \cdot K$ 
     there is $V \in \calv$ such that 
     $B_{\e} (\Phi_{[-\alpha,\alpha]}(z)) \subset V$.
  \end{enumerate}
\end{lemma}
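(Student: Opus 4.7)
The plan is to show this by a standard compactness argument, using the uniform continuity of the flow (Convention~\ref{conv:FS}) and the $G$-equivariance of the whole setup.

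First I would reduce to finding a uniform $\e$ that works for all $z$ in the compact set $K$ itself, rather than $G \cdot K$. This is legitimate because $G$ acts isometrically on $\FS$, the flow $\Phi$ is $G$-equivariant, and $\calv$ is $G$-invariant; so if $B_\e(\Phi_{[-\alpha,\alpha]}(z)) \subset V$ for some $V \in \calv$, then $B_\e(\Phi_{[-\alpha,\alpha]}(gz)) = g\cdot B_\e(\Phi_{[-\alpha,\alpha]}(z)) \subset g \cdot V \in \calv$.

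Next, for each $z \in K$ assertion~\ref{thm:cover-compact-in-FS:covers} of Theorem~\ref{thm:cover-compact-in-FS} provides $V_z \in \calv$ with $\Phi_{[-\alpha,\alpha]}(z) \subset V_z$. Since $\Phi_{[-\alpha,\alpha]}(z)$ is compact (continuous image of a compact interval) and $V_z$ is open in the metric space $\FS$, there is $\delta_z > 0$ with $B_{2\delta_z}(\Phi_{[-\alpha,\alpha]}(z)) \subset V_z$. Now apply the uniform continuity of the flow stated in~\eqref{eq:flow-uniform-continuous}: there exists $\eta_z > 0$ such that whenever $d_\FS(z,z') < \eta_z$ we have $d_\FS(\Phi_\tau(z),\Phi_\tau(z')) < \delta_z$ for all $\tau \in [-\alpha,\alpha]$. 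Consequently, for every $z' \in B_{\eta_z}(z)$,
\[
  B_{\delta_z}\bigl(\Phi_{[-\alpha,\alpha]}(z')\bigr) \subseteq B_{2\delta_z}\bigl(\Phi_{[-\alpha,\alpha]}(z)\bigr) \subseteq V_z.
\]

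Since $K$ is compact, finitely many balls $B_{\eta_{z_1}}(z_1), \dots, B_{\eta_{z_n}}(z_n)$ cover $K$. Set $\e := \min\{\delta_{z_1}, \dots, \delta_{z_n}\} > 0$. Then for any $z \in K$ we pick $i$ with $z \in B_{\eta_{z_i}}(z_i)$, and obtain $B_\e(\Phi_{[-\alpha,\alpha]}(z)) \subseteq B_{\delta_{z_i}}(\Phi_{[-\alpha,\alpha]}(z)) \subseteq V_{z_i} \in \calv$. Combined with the equivariance reduction of the first step, this gives (iv'). There is no real obstacle here: the argument is entirely a Lebesgue-number-style compactness argument, and the only ingredient beyond the hypotheses of Theorem~\ref{thm:cover-compact-in-FS} is the uniform continuity of the flow already built into Convention~\ref{conv:FS}.
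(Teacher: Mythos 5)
Your proof is correct and rests on the same three ingredients as the paper's: $G$-equivariance to reduce to the compact set $K$, openness of the $V$'s to pick a positive radius around the compact orbit segment $\Phi_{[-\alpha,\alpha]}(z)$, and uniform continuity of the flow~\eqref{eq:flow-uniform-continuous}. The only difference is stylistic: you run a direct finite-subcover (Lebesgue-number) argument, whereas the paper argues by contradiction via a sequence $z_n \in K$ with $1/n$-balls failing to be contained in any $V$, passes to a convergent subsequence, and derives a contradiction from uniform continuity; both are standard compactness arguments and essentially the same proof.
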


\begin{proof}
  Suppose that there is no such $\e$. 
  Then we can find a sequence $(z_n)_{n\ge 1}$ of points in 
  $G \cdot K$ such that 
  $B_{1/n}\big(\Phi_{[-\alpha,\alpha]}(z_n)\big) \nsubseteq U$
  holds for every $n \ge 1$ and every $U \in \calv$.   
  Because $\calv$ is $G$-invariant and $\Phi$ is $G$-equivariant, 
  we can assume without
  loss of generality that $z_n \in K$ for all $n$.
  After passing to a subsequence we can assume that 
  $z_n \to z$ as $n \to \infty$.
  Choose $V \in \calv$ with $\Phi_{[-\alpha,\alpha]}(z) \subseteq V$.  
  Since $\Phi_{[-\alpha,\alpha]}(z)$ is compact and $V$ is
  open, we can find $\mu > 0$ with
  $B_{\mu}\bigl(\Phi_{[-\alpha,\alpha]}(z)\bigr) \subseteq V$.
  Because of the uniform continuity of the flow, we can find
  $\delta > 0$ such that 
  $d_{\FS}(\Phi_{\tau}(z),\Phi_{\tau}(z')) < \mu/2$ holds for all 
  $\tau \in [-\alpha,\alpha]$ provided that
  $d_{\FS}(z,z') \le \delta$ is true. 
  Since $\lim_{n \to \infty} z_n = z$, we can find $n \ge 1$ 
  such that $d_{\FS}(z,z_n) < \delta$ and $1/n < \mu/2$ hold. 
  This implies
  $d_{\FS}(\Phi_{\tau}(z),\Phi_{\tau}(z_n)) < \mu/2$ for all $\tau
  \in [-\alpha,\alpha]$. 
  Thus
  \begin{equation*}
    B_{1/n}\big(\Phi_{[-\alpha,\alpha]}(z_n)\big)  \subseteq 
      B_{\mu}\big(\Phi_{[-\alpha,\alpha]}(z)\big) \subseteq V,
  \end{equation*}
  contradicting the assumption.
\end{proof}

\begin{definition}
  \label{def:contracting-transfers}
  We will say that $\FS$ \emph{admits contracting transfers}
  if for every finite subset $S$  of $G$ (containing $e$)  
  there exists $\beta > 0$ and $\widetilde{N} \in \IN$ 
  such that the following holds:

  For every $\delta > 0$ there is
  \begin{enumerate}
    \item \label{def:contracting-transfers:T} $T > 0$;
    \item \label{def:contracting-transfers:X} 
        a contractible compact controlled $\widetilde{N}$-dominated
        space $X$;
    \item \label{def:contracting-transfers:varphi_plus_H}
        a homotopy $S$-action $(\varphi,H)$ on $X$;
    \item \label{def:contracting-transfers:iota}
        a $G$-equivariant map $\iota \colon G \x X \to \FS$
        (where we use the left action $g \cdot (h,x) = (gh,x)$ on $G \x X$),
  \end{enumerate}
  such that
  \setlength\labelwidth\origlabelwidth
  \begin{numberlist}
    \item[\label{nl:foliated-control}] 
        for every $(g,x) \in G \x X$, $s \in S$, $f \in F_s(\varphi,H)$
        there is
        $\tau \in [-\beta,\beta]$ such that 
        \begin{equation*}
          d_\FS\left(\Phi_T(\iota(g,x)),
             \Phi_{T+\tau}(\iota(gs^{-1},f(x)))\right) \leq \delta.
        \end{equation*}
  \end{numberlist}
\end{definition}

\begin{proposition}
  \label{prop:wide-covers-with-flow-spaces}
  If $\FS$ satisfies the assumptions appearing in Convention~\ref{conv:FS},
  admits long covers at infinity and periodic orbits (see
  Definition~\ref{def:at-infinity_plus_periods}), and admits contracting transfers
  (see Definition~\ref{def:contracting-transfers}), then $G$ is transfer
  reducible over the family $\calf$ in the sense of
  Definition~\ref{def:transfer-reducible}.
\end{proposition}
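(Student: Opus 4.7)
The plan is to build the data required by Definition~\ref{def:transfer-reducible} by transferring a long cover of $\FS$ back to $G \x X$ via the map $\iota$ provided by Definition~\ref{def:contracting-transfers}. Given a finite subset $S \subseteq G$ containing $e$, first invoke the contracting transfers hypothesis to extract $\beta > 0$ and $\widetilde N \in \IN$ (taken independent of $S$, as holds in the intended applications). Set $n := |S|$, $\alpha := 2n\beta$, and apply Theorem~\ref{thm:long-thin-cover} to produce a number $\widehat N$, an $\e > 0$ depending on $\alpha$, and a $G$-invariant open $\calf$-cover $\calw$ of $\FS$ with $\dim \calw \le \widehat N$ such that every $z \in \FS$ admits some $W \in \calw$ with $B_\e(\Phi_{[-\alpha,\alpha]}(z)) \subseteq W$. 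With $\e$ and $\beta$ in hand, choose $\delta > 0$ small enough that $C \delta < \e$, where $C := (1+e^{2\beta})(e^{2n\beta}-1)/(e^{2\beta}-1)$, and apply contracting transfers with this $\delta$ to obtain $T, X, (\varphi, H), \iota$. Set $N := \max\{\widetilde N, \widehat N\}$ and define the candidate cover
\[
  \calu := \bigl\{\, \iota_T^{-1}(W) \bigm| W \in \calw \,\bigr\},
  \qquad \iota_T(g,x) := \Phi_T\bigl(\iota(g,x)\bigr).
\]

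The main content is the $S$-long condition. Fix $(g,x) \in G \x X$ and $(h,y) \in S^{|S|}_{\varphi,H}(g,x)$, and unwind the definition to obtain $x_0 = x, x_1, \dots, x_n = y$, elements $a_i, b_i \in S$, and maps $f_i \in F_{a_i}(\varphi,H)$, $\tilde f_i \in F_{b_i}(\varphi,H)$ with $f_i(x_{i-1}) = \tilde f_i(x_i)$ and $h = g a_1^{-1}b_1 \cdots a_n^{-1}b_n$. Setting $h_i := g a_1^{-1}b_1 \cdots a_i^{-1}b_i$ (so $h_i b_i^{-1} = h_{i-1}a_i^{-1}$), apply the flow estimate~\eqref{nl:foliated-control} once to $(h_{i-1}, x_{i-1}, a_i, f_i)$ and once to $(h_i, x_i, b_i, \tilde f_i)$; because $\tilde f_i(x_i) = f_i(x_{i-1})$ both estimates involve flowing the single point $\iota(h_{i-1}a_i^{-1}, f_i(x_{i-1}))$, so a triangle inequality combined with the Lipschitz bound $d_\FS(\Phi_\tau c, \Phi_\tau c') \le e^{|\tau|} d_\FS(c,c')$ from Lemma~\ref{lem:Phi_well-defined} produces $s_i := \tau_i' - \tau_i \in [-2\beta, 2\beta]$ with
\[
  d_\FS\bigl(\Phi_{s_i}\iota_T(h_{i-1}, x_{i-1}), \; \iota_T(h_i, x_i)\bigr) \le (1+e^{2\beta})\delta.
\]
Iterating this bound inductively along $i = 1, \dots, n$ and invoking Lemma~\ref{lem:Phi_well-defined} at each step to transport the accumulated error yields $s := s_1 + \cdots + s_n \in [-\alpha, \alpha]$ with $d_\FS(\Phi_s \iota_T(g,x), \iota_T(h,y)) \le C \delta < \e$. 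Applying the long property of $\calw$ at $\iota_T(g,x)$ produces $W \in \calw$ with $B_\e(\Phi_{[-\alpha,\alpha]} \iota_T(g,x)) \subseteq W$, so $\iota_T(h,y) \in W$ and hence $(h,y) \in \iota_T^{-1}(W) \in \calu$. This verifies $S^{|S|}_{\varphi,H}(g,x) \subseteq \iota_T^{-1}(W)$.

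The remaining axioms are formal: continuity and $G$-equivariance of $\iota_T$ (from those of $\iota$ together with $G$-equivariance of $\Phi_T$) make $\calu$ a $G$-invariant open cover of $G \x X$, and the identity $g\iota_T^{-1}(W) = \iota_T^{-1}(gW)$ shows that $\calu$ inherits the $\calf$-cover property from $\calw$. One has $\dim \calu \le \dim \calw \le \widehat N \le N$, while $X$ is contractible, compact, and controlled $\widetilde N \le N$-dominated by Definition~\ref{def:contracting-transfers}. The main obstacle is controlling the exponential factor $e^{2|S|\beta}$ produced when iterating Lemma~\ref{lem:Phi_well-defined} through a chain of length $|S|$: this forces the strict order $S \mapsto \beta \mapsto \alpha \mapsto \e \mapsto \delta \mapsto (T, X, (\varphi, H), \iota)$ of parameter choices, since $\e$ must be fixed before $\delta$ can be chosen small enough to absorb $C$.
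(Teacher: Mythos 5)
Your overall architecture matches the paper's: fix $S$, set $\alpha := 2|S|\beta$, obtain a long cover $\calw$ of $\FS$ and $\e > 0$ from Theorem~\ref{thm:long-thin-cover}, choose $\delta$, obtain $T,X,(\varphi,H),\iota$ from contracting transfers, and pull $\calw$ back along $\Phi_T\circ\iota$. The place where the work happens — propagating the one-step estimate~\eqref{nl:foliated-control} along a chain of length $|S|$ — is also where the paper puts it, in Lemma~\ref{lem:foliated-controll-for-F_h}. But there is a genuine gap in how you do the iteration.

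You invoke the Lipschitz bound $d_\FS(\Phi_\tau(c),\Phi_\tau(c'))\le e^{|\tau|}d_\FS(c,c')$ from Lemma~\ref{lem:Phi_well-defined} to push the accumulated error forward in time, producing the exponential constant $C$. However, Lemma~\ref{lem:Phi_well-defined} is a property of the \emph{concrete} metric $d_{\FS(X)}$ of Definition~\ref{def_flow_space_FS(X)}. Proposition~\ref{prop:wide-covers-with-flow-spaces} is stated for an abstract flow space satisfying only Convention~\ref{conv:FS}, and the only continuity hypothesis there is the uniform continuity condition~\eqref{eq:flow-uniform-continuous}, which provides no Lipschitz constant. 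So the key inequality $d_\FS(\Phi_{s}c,\Phi_{s}c')\le e^{|s|}d_\FS(c,c')$ that drives your induction is simply not available in the generality of the proposition.

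The paper avoids this by a different bookkeeping. In Lemma~\ref{lem:foliated-controll-for-F_h}, one first applies~\eqref{nl:foliated-control} to get the $\delta$-bounds at the \emph{fixed} base time $T$, then defines cumulative shifts $\sigma_i\in[-\alpha,\alpha]$ and applies~\eqref{eq:flow-uniform-continuous} \emph{once} (with $\alpha$ fixed and target $\epsilon := \epsilon_0/(2|S|)$) to convert each $\delta$-bound into an $\epsilon$-bound after the time shift. The identity $T+\tau_i+\sigma_{i-1}=T+\widetilde\tau_i+\sigma_i$ makes the two estimates through the common point $\iota(g_{i-1}a_i^{-1},f_i(x_{i-1}))$ glue by the triangle inequality, and the errors accumulate additively to $2|S|\epsilon=\epsilon_0$. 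This keeps the quantifier order $\alpha\to\epsilon_0\to\epsilon\to\delta\to T$ and needs no Lipschitz estimate. Your parenthetical that $\widetilde N$ is "taken independent of $S$" flags a real friction between Definitions~\ref{def:transfer-reducible} and~\ref{def:contracting-transfers}, but it is also present in the paper's proof and is orthogonal to the Lipschitz gap.
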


The proof of Proposition~\ref{prop:wide-covers-with-flow-spaces}
will use the following the lemma.

\begin{lemma}
  \label{lem:foliated-controll-for-F_h}
  Let $\beta > 0$, $T > 0$ and $\epsilon > 0$.
  Let $S$ be a finite subset of $G$ (containing $e$).
  Set $n := |S|$ and $\alpha := 2 n \beta $. 
  Pick $\delta$ such that \eqref{eq:flow-uniform-continuous} holds. 
  Let $(\varphi,H)$ be a homotopy $S$-action on a compact metric space $X$
  and let $\iota \colon G \x X \to \FS$ be a $G$-equivariant map.
  Assume \eqref{nl:foliated-control} holds. 

  Then
  \setlength\labelwidth\origlabelwidth
  \begin{numberlist}
    \item[\label{nl:foliated-control-for-F_h}] 
       for every $(g,x) \in G \x X$ and 
       $(h,y) \in S^n_{\varphi,H}(g,x)$  
       (see Definition~\ref{def:S-action_plus_long-covers}) 
       there is
       $\tau \in [-\alpha,\alpha]$ such that 
       \begin{equation*}
         d_\FS\left( \Phi_T(\iota(g,x )),\Phi_{T+\tau}(\iota(h,y))\right)  
           \leq  2n \epsilon.
       \end{equation*}
  \end{numberlist}
\end{lemma}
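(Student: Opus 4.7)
I would prove \eqref{nl:foliated-control-for-F_h} by induction on $n$ after unpacking the definition of $S^n_{\varphi,H}(g,x)$. By Definition~\ref{def:S-action_plus_long-covers}~\ref{def:S-action_plus_long-covers:S(g,x)} there exist sequences $(x_i)_{i=0}^n$ in $X$ with $x_0 = x$, $x_n = y$, elements $a_i, b_i \in S$ with $h = g a_1^{-1} b_1 \cdots a_n^{-1} b_n$, and maps $f_i \in F_{a_i}(\varphi,H)$, $\tilde{f}_i \in F_{b_i}(\varphi,H)$ with $f_i(x_{i-1}) = \tilde{f}_i(x_i)$. Set $g_0 := g$, $g_i := g a_1^{-1} b_1 \cdots a_i^{-1} b_i$ (so $g_n = h$), and $w_i := g_{i-1} a_i^{-1}$ (so $g_i = w_i b_i$). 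Without loss of generality assume $\delta \le \epsilon$.

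The key observation is that the hypothesis \eqref{nl:foliated-control} can be applied at each index $i$ in two ways. First, to the pair $(g_{i-1}, x_{i-1})$ with $s = a_i$ and $f = f_i$, yielding $s^{(i)}_1 \in [-\beta, \beta]$ with
\[
d_\FS\bigl(\Phi_T(\iota(g_{i-1}, x_{i-1})), \Phi_{T + s^{(i)}_1}(\iota(w_i, f_i(x_{i-1})))\bigr) \leq \delta.
\]
Second, using $\tilde{f}_i(x_i) = f_i(x_{i-1})$ and $g_i b_i^{-1} = w_i$, to the pair $(g_i, x_i)$ with $s = b_i$ and $f = \tilde{f}_i$, yielding $s^{(i)}_2 \in [-\beta, \beta]$ with
\[
d_\FS\bigl(\Phi_T(\iota(g_i, x_i)), \Phi_{T + s^{(i)}_2}(\iota(w_i, f_i(x_{i-1})))\bigr) \leq \delta.
\]
Thus both $\Phi_T(\iota(g_{i-1},x_{i-1}))$ and $\Phi_T(\iota(g_i,x_i))$ are $\delta$-close to points on the same flow line of $\iota(w_i, f_i(x_{i-1}))$, separated by a time shift in $[-2\beta, 2\beta]$.

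Now define $\tau_0 := 0$ and $\tau_i := \tau_{i-1} + s^{(i)}_1 - s^{(i)}_2$, so $|\tau_i| \le 2i\beta \le \alpha$. I would prove by induction on $i$ that
\[
d_\FS\bigl(\Phi_T(\iota(g_0,x_0)), \Phi_{T+\tau_i}(\iota(g_i,x_i))\bigr) \le 2i\epsilon.
\]
For the inductive step, apply $\Phi_{\tau_{i-1}}$ to the first $\delta$-estimate and $\Phi_{\tau_{i-1} + s^{(i)}_1 - s^{(i)}_2}$ to the second; both time shifts have absolute value at most $\alpha$, so uniform continuity \eqref{eq:flow-uniform-continuous} upgrades the $\delta$-bounds to $\epsilon$-bounds. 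The triangle inequality combined with the inductive hypothesis then yields the bound $2(i-1)\epsilon + \epsilon + \epsilon = 2i\epsilon$. Setting $\tau := \tau_n$ and $i=n$ gives the conclusion, since $g_0 = g$, $x_0 = x$, $g_n = h$, $x_n = y$.

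The only real subtlety is bookkeeping the time shifts $\tau_i$ so that at every invocation of uniform continuity the shift stays bounded by $\alpha = 2n\beta$; this is automatic from the definition of $\tau_i$ together with $|s^{(i)}_j| \le \beta$. There is no geometric obstacle beyond making sure that one applies the hypothesis "backward" at $(g_i, x_i)$ rather than chasing the orbit forward through $\tilde{f}_i$, which is what makes the identity $\tilde{f}_i(x_i) = f_i(x_{i-1})$ essential for tying the two estimates to a common reference point on a common flow line.
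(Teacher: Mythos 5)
Your proof is correct and follows essentially the same approach as the paper: unpack $S^n_{\varphi,H}(g,x)$, apply the hypothesis \eqref{nl:foliated-control} at each index $i$ once through $f_i$ and once through $\tilde{f}_i$, exploit $f_i(x_{i-1})=\tilde{f}_i(x_i)$ and $g_{i-1}a_i^{-1}=g_ib_i^{-1}$ to tie both estimates to the same flow line, accumulate the time shifts (your $\tau_i$ equals the paper's $\sigma_i$), and upgrade $\delta$-bounds to $\epsilon$-bounds via uniform continuity before chaining the triangle inequality. The only cosmetic differences are that you phrase the final step as an explicit induction and include an unnecessary (but harmless) ``WLOG $\delta\leq\epsilon$'' assumption.
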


\begin{proof}
  If $(h,y) \in S^n_{\varphi,H}(g,x)$,
  then there are
  $x_0,\dots,x_n \in X$,
  $a_1,b_1,\dots,a_n,b_n \in S$,
  $f_1,\widetilde{f}_1,\dots,f_n,\widetilde{f_n} \colon X \to X$,
  such that
  $x_0 = x$, $x_n = y$, $f_i \in F_{a_i}(\varphi,H)$,
  $\widetilde{f}_i \in F_{b_i}(\varphi,H)$,
  $f_i(x_{i-1}) = \widetilde{f}_i(x_i)$ and
  $h = g a_1^{-1} b_1 \dots a_n^{-1} b_n$.
  Set $g_i := g a_1^{-1} b_1 \dots a_i^{-1} b_i$.
  By \eqref{nl:foliated-control} there are 
  $\tau_1,\widetilde{\tau}_1,\dots,\tau_n,\widetilde{\tau}_n 
                                            \in [-\beta,\beta]$ 
  such that
  \begin{eqnarray*}
     d_\FS\left(\Phi_T(\iota( g_{i-1},x_{i-1} )),
        \Phi_{T+\tau_i}(\iota (g_{i-1} a^{-1}_{i},f_i(x_{i-1})))\right) 
     & \leq & \delta; \\
     d_\FS\left(\Phi_T (\iota(g_i,x_i)), \Phi_{T+\widetilde{\tau}_{i}}
                    (\iota (g_i b_i^{-1},\widetilde{f}_i(x_i)))\right)
     & \leq & \delta 
  \end{eqnarray*}
  for $i = 1,\dots, n$. 
  Put $\sigma_0 = 0$ and 
  $\sigma_i := (-\widetilde{\tau_1} + \tau_1) + \dots + 
  (-\widetilde{\tau_i} + \tau_i)$ for $i = 1,2, \dots, n$.
  Since $\sigma_i \in [-\alpha,\alpha]$, we conclude 
  for $i = 1,\dots, n$ from~\eqref{eq:flow-uniform-continuous} 
  \begin{eqnarray*}
      d_\FS\left(\Phi_{T+\sigma_{i-1}}(\iota( g_{i-1},x_{i-1} )),
           \Phi_{T+\tau_i + \sigma_{i-1}}(\iota (g_{i-1} a^{-1}_{i},
                                              f_i(x_{i-1})))\right) 
      & \leq & \epsilon; 
      \\
      d_\FS\left(\Phi_{T+\sigma_i} (\iota(g_i,x_i)),
           \Phi_{T+\widetilde{\tau}_{i}+\sigma_i}(\iota (g_i b_i^{-1},
                                          \widetilde{f}_i(x_i)))\right)
      & \leq & \epsilon. 
  \end{eqnarray*}
  Since $g_{i-1}a_i^{-1} = g_i b_i^{-1}$, $f_i(x_{i-1}) = \widetilde{f}_i(x_i)$
  and $T+\tau_i + \sigma_{i-1} = T+\widetilde{\tau}_{i}+\sigma_i$
  we conclude for $i = 1,\dots,n$ from the triangle inequality
  $$
  d_\FS\left(\Phi_{T+\sigma_{i-1}}(\iota( g_{i-1},x_{i-1})),
  \Phi_{T+\sigma_{i}}(\iota(g_{i}, x_{i}))\right)
  \leq 2\epsilon.  
  $$
  Using the triangle inequality we obtain
  \begin{eqnarray*}
    \lefteqn{ d_\FS( \Phi_T (  \iota(g,x) ),
              \Phi_{T+\sigma_n}(  \iota(h,y) ))   } 
    \\
    & = &
    d_\FS( \Phi_T ( \iota(g_0,x_0) ), \Phi_{T+\sigma_n} ( \iota (g_n,x_n )  ) )
    \\
    & \leq &
    \sum_{i=1}^{n} d_\FS( \Phi_{T+\sigma_{i-1}}    ( \iota(g_{i-1},x_{i-1}) ), 
                                 \Phi_{T+\sigma_{i}} ( \iota(g_{i},x_{i}) ) )
    \\
    & \leq &
    \sum_{i=1}^{n} 2\epsilon 
    \\
    & \le & 2n \epsilon.
  \end{eqnarray*}
\end{proof}

\begin{proof}
  [Proof of Proposition~\ref{prop:wide-covers-with-flow-spaces}]
  Let $S$ be a finite subset of $G$.
  Let $\widehat{N}$ be the number from Theorem~\ref{thm:long-thin-cover}.
  Let $\widetilde{N}$ and $\beta$ be the numbers (depending on $S$) 
  appearing in Definition~\ref{def:contracting-transfers}. 
  Put $\alpha := 2 \beta |S|$. 
  By Theorem~\ref{thm:long-thin-cover} there is an open $\calf$-cover 
  $\calu$ of $\FS$ of dimension at most $\widehat{N}$ and  
  $\epsilon_0 > 0$ with the property that for every 
  $z \in \FS$ there is $U_z \in \calu$ such that
  $$B_{\epsilon_0}\bigl(\Phi_{[-\alpha,\alpha]}(z)\bigr) \subseteq U_z.$$
  Put $\epsilon := \frac{\epsilon_0}{2|S|}$.
  Pick $\delta > 0$ such that~\eqref{eq:flow-uniform-continuous} holds.
  By assumption (see Definition~\ref{def:contracting-transfers}), 
  there are $T>0$, 
  a contractible compact controlled $\widetilde{N}$-dominated space $X$,
  a $G$-equivariant map $\iota \colon G \x X \to \FS$ 
  and a homotopy $S$-action $(\varphi,H)$ on $X$
  such that \eqref{nl:foliated-control} holds.
  Using Lemma~\ref{lem:foliated-controll-for-F_h} we conclude
  that for every $(g,x) \in G \x {X}$ we have
  $$\Phi_T(\iota(h,y))  \in 
    B_{\epsilon_0}\bigl(\Phi_{[-\alpha,\alpha]}(\Phi_T(\iota(g,x)))\bigr)$$
  for all $(h,y) \in S^n_{\varphi,H}(g,x)$, $n \leq |S|$.
  Hence we get  $\Phi_T(\iota(h,y)) \in U_{\Phi_T(\iota(g,x))}$ for all
  $(h,y) \in S^n_{\varphi,H}(g,x)$, $n \leq |S|$.
  This implies that 
  $\calv := \{ (\Phi_T \circ \iota)^{-1}(U) \mid U \in \calu \}$
  is $S$-long with respect to $(\varphi,H)$.
  Finally, $\dim \calv \leq \dim \calu \leq \widehat{N}$.
  Thus $G$ is transfer reducible over $\calf$,
  where we use $N:= \max\{\widehat{N},\widetilde{N}\}$.
\end{proof}


\typeout{--------- Proof of Main Theorem ------------}

\section{Non-positively curved groups are transfer reducible}
\label{sec:Non-positively_curved_groups_are_transfer_reducible}

In this section we prove our Main Theorem as stated in the introduction.
Let $G$ be a group with an isometric cocompact proper action 
on a finite dimensional $\CAT(0)$-space $X$.
We need to show that $G$ is transfer reducible over the
family $\VCyc$ of virtually cyclic subgroups,
see Definition~\ref{def:transfer-reducible}.
To this end we will show that 
Proposition~\ref{prop:wide-covers-with-flow-spaces}
applies, where $\calf = \VCyc$.

\subsection{$\overline{B}_{R}(x)$ is $(2\cdot \dim(X) + 1)$-dominated}

Fix a base point $x_0 \in X$. 
For $r \ge  0$ let 
\begin{eqnarray*}
  \rho_{r,x_0} \colon  \overline{X} 
  & \to & 
  \overline{B}_r(x_0) 
\end{eqnarray*}
be the natural projection introduced in Remark~\ref{rem:cone-topology}. 
The map $\rho_{r,x_0}$ is the identity on $\overline{B}_r(x_0)$.
If $x \in \overline{X}$ with $x \notin B_r(x_0)$, 
then $\rho_{r,x_0}(x) = c_{x_0,x}(r)$,
where $c_{x_0,x} \colon \IR \to X$ is the generalized geodesic uniquely 
determined by $c_- = 0$, $c(-\infty) = x_0$ and
$c(\infty) = x$ (see~Subsection~\ref{subsec:homotopy-action-on-B}).

\begin{lemma} 
\label{lem:X_Euclidean_neighbordood_retract}
  The space $X$ is a Euclidean neighborhood retract, i.e., there is a natural
  number $N$, a closed subset $A \subseteq \IR^N$, an open neighborhood 
  $U$ of $A$ in $\IR^N$ and a map $r \colon U \to A$ such that $r|_A = \id_A$ and
  $X$ is homeomorphic to $A$. 
  The number $N$ can be chosen to be 
  $2 \cdot \dim(X) + 1$.
\end{lemma}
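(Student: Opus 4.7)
The plan is to combine a closed embedding into Euclidean space with the classical ANR recognition principle. Since $X$ is proper by Convention~\ref{conv:cat-0-satisfy}, it is locally compact, $\sigma$-compact and separable. Together with the assumption $\dim X < \infty$, this allows us to apply the Menger–Nöbeling-type embedding theorem in its form for locally compact separable metric spaces of finite covering dimension (see for instance Hurewicz–Wallman or Engelking's dimension theory) to realize $X$ as a closed subset $A$ of $\IR^{2\dim(X)+1}$.

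To produce the open neighborhood $U \supseteq A$ and the retraction $r \colon U \to A$, I will appeal to the classical theorem that a locally compact, separable, finite-dimensional metric space is an ANR (absolute neighborhood retract) provided it is locally contractible. Once $X$ is known to be an ANR, the closed embedding $X \cong A \hookrightarrow \IR^{2\dim(X)+1}$ automatically comes with a neighborhood retraction by the defining property of ANRs, which is exactly the conclusion we need.

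Local contractibility of $X$ is immediate from the $\CAT(0)$ hypothesis. For any $x \in X$ and $r > 0$, define a homotopy $H \colon \overline{B}_r(x) \times [0,1] \to \overline{B}_r(x)$ by $H(y,t) := c_{x,y}\bigl((1-t) \cdot d_X(x,y)\bigr)$. Continuity of $H$ follows from the fact that $(x,y) \mapsto c_{x,y}$ is continuous into $\FS(X)$ and from the (uniform) continuity of the evaluation maps established in Lemma~\ref{lem:comparing_d_FS-d_X}. One checks $H(y,0) = y$, $H(y,1) = x$, and $d_X(x, H(y,t)) = (1-t) \cdot d_X(x,y) \le r$, so the homotopy stays inside $\overline{B}_r(x)$; thus every ball in $X$ is contractible inside itself, and $X$ is locally contractible. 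The only nontrivial external ingredients are the ANR recognition theorem and the closed version of the Menger–Nöbeling embedding theorem; the $\CAT(0)$ geometry handles the contractibility step essentially for free, so the main work lies in citing the correct classical references.
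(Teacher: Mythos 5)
Your proposal is correct and follows essentially the same route as the paper: embed $X$ as a closed subset of $\IR^{2\dim(X)+1}$ via the classical embedding theorem for separable locally compact metric spaces of finite covering dimension, observe that the $\CAT(0)$ hypothesis makes $X$ locally compact and locally contractible, and invoke a recognition theorem to conclude it is an ENR. The paper cites Dold's Proposition~IV.8.12 (a locally compact, locally contractible subset of Euclidean space is an ENR) in place of your ANR recognition theorem, and simply remarks that unique geodesics give local contractibility where you supply the explicit geodesic contraction, but these are only cosmetic differences in which classical reference is quoted and how much detail is spelled out.
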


\begin{proof}
  Since $X$ is proper as metric space, it is locally compact.
  Since any two points in $X$ can be joint by a unique geodesic,
  $X$ is connected and locally contractible. 
  Hence $X$ has a countable basis for its topology 
  (see~\cite[Exercise~2 in Chapter~6.5 on page~261]{Munkres(1975)}). 
  Obviously $X$ is Hausdorff.
  By assumption $\dim(X) < \infty$. We conclude 
  from~\cite[Exercise~10 in Chapter~7.9 on page~315]{Munkres(1975)}
  that $X$ is homeomorphic to a closed subset $A$ of $\IR^N$
  for $N = 2\cdot \dim(X) + 1$. 
  Now apply~\cite[Proposition~8.12 in Chapter~IV.8 on p.~83]{Dold(1980)}.
\end{proof}

\begin{lemma}
  \label{lem:balls-are-N-dominated}
  The space $\overline{B}_R(x_0)$ is a compact contractible metric
  space which is controlled $(2\cdot \dim(X) + 1)$-dominated (in the sense of
  Definition~\ref{def:N-dominated_space}).
\end{lemma}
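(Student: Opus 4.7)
The plan is to verify the three properties of compact, contractible, and controlled $(2\dim(X)+1)$-dominated in turn. First, $B := \overline{B}_R(x_0)$ is compact because $X$ is a proper metric space by assumption. For contractibility, I will use the $\CAT(0)$-geodesic contraction: the map $h \colon B \x [0,1] \to B$ given by $h(x,t) := c_{x_0,x}\bigl(t \cdot d_X(x_0,x)\bigr)$ deforms $B$ onto $\{x_0\}$, and is continuous because in a $\CAT(0)$-space the geodesic between two points depends continuously on its endpoints (\cite[II.1.4(1) on p.160]{Bridson-Haefliger(1999)}).

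For the controlled $N$-domination with $N = 2\dim(X)+1$, the key observation is that it is enough to construct $i \colon B \to K$ and $p \colon K \to B$ with $p \circ i = \id_B$, since then the constant homotopy witnesses the condition for every $\e > 0$ simultaneously (with tracks of diameter $0$). By Lemma~\ref{lem:X_Euclidean_neighbordood_retract} there is a homeomorphism $\phi \colon X \to A$ onto a closed subset $A \subseteq \IR^N$ and a retraction $r \colon U \to A$ of an open neighborhood $U$ of $A$ in $\IR^N$.

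Now I will set up the complex $K$. Because $\phi(B)$ is compact and $U$ is open, there exists $\eta > 0$ such that the closed $\eta$-neighborhood of $\phi(B)$ in $\IR^N$ lies inside $U$. I choose a simplicial triangulation of $\IR^N$ of mesh less than $\eta$ and let $K$ denote the subcomplex consisting of all simplices that meet $\phi(B)$. Compactness of $\phi(B)$ ensures that $K$ is finite, and its dimension is at most $N$. By construction $|K| \subseteq U$, and every point of $\phi(B)$ lies in some simplex of $K$, so $\phi(B) \subseteq |K|$. I then define
\[
  i \colon B \to |K|, \quad x \mapsto \phi(x),
  \qquad
  p \colon |K| \to B, \quad y \mapsto \rho_{R,x_0}\bigl(\phi^{-1}(r(y))\bigr).
\]
For $x \in B$ we have $\phi(x) \in A$, so $r(\phi(x)) = \phi(x)$, and since $x \in \overline{B}_R(x_0)$ we have $\rho_{R,x_0}(x) = x$; hence $p \circ i = \id_B$ as required.

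There is no serious obstacle; the only delicate point is ensuring that $|K|$ remains inside the domain $U$ of the retraction while still containing $\phi(B)$, which is handled by the mesh condition and the compactness of $\phi(B)$.
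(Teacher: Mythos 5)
Your proof is correct and follows essentially the same route as the paper: both invoke Lemma~\ref{lem:X_Euclidean_neighbordood_retract} to realize $X$ as a Euclidean neighborhood retract, find a finite $(2\dim(X)+1)$-dimensional subcomplex $K$ of a triangulation containing the (compact) image of $\overline{B}_R(x_0)$, and obtain the retraction $p$ by composing the ENR retraction with $\rho_{R,x_0}$. Your explicit remarks that $p\circ i = \id$ already yields controlled domination via the constant homotopy, and that a mesh condition keeps $|K|$ inside the domain of the retraction, are welcome clarifications but not a different argument.
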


\begin{proof} 
  Since $X$ is proper as metric space by assumption,
  the closed ball $\overline{B}_R(x_0)$  is compact.  
  The space $\overline{B}_R(x_0)$ inherits from $X$ a metric
  and is contractible.

  Because of Lemma~\ref{lem:X_Euclidean_neighbordood_retract} 
  we can find  an open subset
  $U \subset \IR^{2\cdot \dim(X) + 1}$ and maps $i \colon X \to U$ and 
  $r \colon U \to X$ with $r \circ i = \id_X$.
  Since $U$ is a smooth manifolds, it can be triangulated and hence is a
  simplicial complex of dimension $(2\cdot \dim(X) + 1)$. 
  Since $\overline{B}_R(x_0)$ is compact,
  $i\bigl(\overline{B}_R(x_0)\bigr)$ is compact and 
  hence contained in a finite subcomplex $K \subseteq U$. 
  Since $K$ and hence $r(K)$ are compact,
  we can find $C > 0$ with $r(K) \subseteq \overline{B}_{R+C}(x_0)$.
  Let 
  $$i' \colon \overline{B}_R(x_0) \to K$$ 
  be the map defined by $i$. 
  Let
  $$r' \colon K \to  \overline{B}_R(x_0)$$
  be the composite
  $$K \xrightarrow{r|_K} \overline{B}_{R+C}(x_0) 
     \xrightarrow{\rho_{R,x_0}|_{\overline{B}_{R+C}(x_0)}}
     \overline{B}_{R}(x_0).$$
  Then $r' \circ i' = \id_{\overline{B}_R(x_0)}$ and $K$ is a finite
  $(2\cdot \dim(X) + 1)$-dimensional
  simplicial complex.  
  This implies that $\overline{B}_R(x_0)$ is controlled
  $(2\cdot \dim(X) + 1)$-dominated.
\end{proof}

\subsection{Convention~\ref{conv:FS} applies to $\FS(X)$} 

Let $\FS(X)$ be the flow space for $X$ from 
Definition~\ref{def_flow_space_FS(X)}. 
We will show  that this flow space satisfies
the conditions from Convention~\ref{conv:FS}.
By Proposition~\ref{prop:cocompact} the action of $G$ on $\FS(X)$ is 
isometric, proper and cocompact.
In particular, the flow space $\FS(X)$ is locally compact.
By Proposition~\ref{prop:flow-space-is-locally-connected}
the flow space $\FS(X)$ is locally connected.
By~\cite[II.2.8(2)~on~p.179]{Bridson-Haefliger(1999)}
there is $k_G < \infty$ such that finite subgroups of $G$
have order at most $k_G$. 
By Proposition~\ref{pro:dim-for_FS-FSR}
$\dim \FS(X) - \FS(X)^\IR$ is finite.
The uniform continuity of the flow follows from
Lemma~\ref{lem:Phi_well-defined}.

\subsection{Long $\VCyc$-covers for $\FS(X)$ at periodic flow lines}
\label{subsec_long_covers_periodic_flow_lines}

We need to show that the flow space $\FS(X)$ admits
long $\VCyc$-covers at infinity and periodic flow lines
in the sense of Definition~\ref{def:at-infinity_plus_periods}.
We take for $M$ the number appearing in 
Theorem~\ref{the:covering_of_FS(X)_gamma}.
Let $\gamma > 0$ be given.
Because the action of $G$ on $\FS(X)$ is cocompact we conclude from
Theorem~\ref{the:covering_of_FS(X)_gamma} that there is $\e > 0$ and 
a $G$-invariant cofinite collection $\calv$ of 
open $\VCyc$-subsets of $\FS(X)$ such that
\begin{itemize}
  \item $\dim \calv \leq M$;
  \item for any $c \in \FS(X)_{\leq \gamma}$ there is 
     $V \in \calv$ such that $B_{2\e}(\Phi_{[-\gamma,\gamma]}(c)) \subseteq V$. 
\end{itemize}
Put $S := \bigl\{ c \in \FS \mid \exists U \in \calv \; 
               \text{such that}\; \overline{B}_\e(\Phi_{[-\gamma,\gamma]}(c))  
                                              \subseteq U\bigr\}$.
Note that $S$ is $G$-invariant, because $\calv$ is.
Moreover, $\FS(X)_{\leq \gamma} \subseteq S$.
It remains to show, that there is a compact subset $K \subseteq \FS(X)$
such that $\FS(X) - S = G \cdot K$. 
Because the action of $G$ on $\FS(X)$ is cocompact 
(Proposition~\ref{prop:cocompact}) and $\FS(X)$
is locally compact (Proposition~\ref{prop:FS-is-proper}) 
it suffices to show that $S$ is open.

Choose $U \in \calv$ with
$\overline{B}_\e(\Phi_{[-\gamma,\gamma]} (c_0)) \subseteq U$.
It suffices to show that there is $\delta > 0$ such that
$\overline{B}_\e(\Phi_{[-\gamma,\gamma]} (c)) \subseteq U$
for all $c \in \FS(X)$ with $d_\FS(c,c_0) < \delta$.
We proceed by contradiction and assume that there is no 
such $\delta$.
Then there are $c_n$, $d_n \in \FS(X)$, $t_n \in [-\gamma,\gamma]$
such that $d_\FS(c_n,c_0) < 1/n$, $d_\FS(d_n,\Phi_{t_n}(c_n)) \leq \e$
but $d_n \not\in U$. 
In particular $c_n \to c_0$. 
By passing to a subsequence we can assume that 
$t_n \to t \in [-\gamma,\gamma]$.
Thus $\Phi_{t_n}(c_n) \to \Phi_t(c_0)$.
It follows that
$d_\FS(d_n,\Phi_t(c_0))$ is bounded independent of $n$.
Because $\FS(X)$ is a proper metric space
(Proposition~\ref{prop:FS-is-proper}) 
we can pass to a further subsequence and
assume that $d_n \to d$.
Then $d_\FS(d,\Phi_t(c_0)) \leq \e$.
Thus $d \in U$.
But this implies $d_n \in U$ for sufficiently large $n$,
a contradiction.

This shows that the flow space $\FS(X)$ admits
long $\VCyc$-covers at infinity and periodic flow lines.

\subsection{Contracting transfers for $\FS(X)$}
\label{subsec:contracting_transfers}

Finally we need to show that $\FS(X)$ admits contracting transfers
in sense of Definition~\ref{def:contracting-transfers}.
Let $S$ be a finite subset of $G$ (containing $e$).
Let $\beta$ be the number appearing in Proposition~\ref{prop:flow-estimate-H}.
Set $\tilde N := 2 \dim X + 1$.
Let $\delta > 0$ be given.
Let $T, R > 0$ be the numbers coming from 
Proposition~\ref{prop:flow-estimate-H}.
Then $\overline{B}_R(x_0)$ is compact, contractible and 
controlled $\tilde N$-dominated by
Lemma~\ref{lem:balls-are-N-dominated}.
We use the homotopy $S$ action $(\varphi,H)$ on $B_R(x_0)$ from
Definition~\ref{def:homotopy-action-on-B} and the map 
$\iota \colon G \x B_r(x_0) \to  \FS(X)$ obtained by restriction
from the map from Definition~\ref{def:the_mao_iota}.
It follows from Proposition~\ref{prop:flow-estimate-H}
that~\eqref{nl:foliated-control} holds.
 
Thus Proposition~\ref{prop:wide-covers-with-flow-spaces} applies
and we conclude that $G$ is transfer reducible over $\VCyc$ as claimed
in our main Theorem~\ref{the:main_theorem} in the introduction.

\begin{remark}
  Our argument proves a little more than is stated in
  our main Theorem.
  Namely, the cover we construct is in addition cofinite
  for the action of $G$, i.e., $G \backslash \calu$ is
  finite.
  This follows from Theorem~\ref{the:covering_of_FS(X)_gamma}~\ref{the:covering_of_FS(X)_gamma:finite}
  and Theorem~\ref{thm:cover-compact-in-FS}~\ref{thm:cover-compact-in-FS:cofinite}. 
\end{remark}
   


\end{document}